\newtheorem{theorem}{Theorem}[section]
\newtheorem{auxtheorem}[theorem]{Addendum}
\newtheorem{lemma}[theorem]{Lemma}
\newtheorem{prop}[theorem]{Proposition}
\newtheorem{cor}[theorem]{Corollary}
\newtheorem*{theorem*}{Theorem}
\newtheorem*{EMC}{EMC}
\theoremstyle{definition}
\newtheorem*{defn*}{Definition}
\def\Xint#1{\mathchoice
	{\XXint\displaystyle\textstyle{#1}}%
	{\XXint\textstyle\scriptstyle{#1}}%
	{\XXint\scriptstyle\scriptscriptstyle{#1}}%
	{\XXint\scriptscriptstyle\scriptscriptstyle{#1}}%
    \!\int}
\def\XXint#1#2#3{{\setbox0=\hbox{$#1{#2#3}{\int}$}
     \vcenter{\hbox{$#2#3$}}\kern-.5\wd0}}
\newcommand{\Aut}[1]{\textup{Aut}(#1)}
\newcommand{\CComplex}{\mathbf{C}}
\newcommand{\Cl}{\mathcal{C}\negthinspace\ell}
\newcommand{\condabsc}[2]{\alpha_{#1}(\cyclic{#1}^{#2})}
\newcommand{\condc}[1]{\gamma(#1)}
\newcommand{\condb}[1]{\beta(#1)}
\newcommand{\conda}[1]{\alpha_p(#1)}
\newcommand{\cyclic}[1]{C_{#1}}
\newcommand{\fdindex}[2]{[#1:#2]}
\newcommand{\ffield}[1]{\mathbb{F}_{#1}}
\newcommand{\Gal}[2]{\textup{Gal}(#1/#2)}
\newcommand{\grindex}[2]{\left(#1 : #2\right)}
\newcommand{\grossO}[1]{\textup{O}\left(#1\right)}
\newcommand{\grossOmega}[1]{\Omega\left(#1\right)}
\newcommand{\grossTheta}[1]{\Theta\left(#1\right)}
\newcommand{\iso}{\simeq}
\newcommand{\kleino}[1]{\textup{o}\left(#1\right)}
\newcommand{\local}[2]{#1_{\mf{#2}}}
\newcommand{\mc}[1]{\mathcal{#1}}
\newcommand{\mf}[1]{\mathfrak{#1}}
\newcommand{\N}[1]{\Norm{\mathfrak{#1}}}
\newcommand{\Norm}[1]{\|#1\|}
\newcommand{\open}[2]{1+\mf{#1}^{#2}}
\newcommand{\Open}[2]{\langle\open{#1}{#2}\rangle}
\newcommand{\operp}{\bigcirc\kern-1.17em\perp}
\newcommand{\val}[1]{v_{\mf{#1}}}
\newcommand{\ord}[2]{\val{#1}(#2)}
\newcommand{\Res}{\textup{Res}}
\newcommand{\ZZ}{\mathbf{Z}}
\date{2011}
\renewcommand{\thefootnote}{\fnsymbol{footnote}}
\begin{document}

\begin{frontmatter}

\title{Distribution of Artin-Schreier Extensions}
\author{Thorsten Lagemann\footnote{
Universit\"at Paderborn, 
Institut f\"ur Mathematik, 33095 Paderborn, Germany. lagemann@math.uni-paderborn.de}}
\journal{Journal of Number Theory}

\begin{abstract}
The article at hand contains exact asymptotic formulas for the distribution of conductors of elementary abelian $p$-extensions of global function fields of characteristic~$p$. As a consequence for the distribution of discriminants, this leads to an exact asymptotic formula for simple cyclic extensions and an interesting lower bound for noncyclic elementary abelian extensions.  
\end{abstract}

\end{frontmatter} 

\setcounter{footnote}{0}
\renewcommand{\thefootnote}{\arabic{footnote}}

\section{Main results} 

Let $F$ be a global function field of characteristic~$p$, that is a transcendental extension of degree $1$ over some finite field $\ffield{q}$ of cardinality $q$, and let $\ffield{q}$ be algebraically closed in $F$. For an extension $E/F$, let $\mf{f}(E/F)$ be its relative conductor, and $\Norm{\mf{f}(E/F)} = q^{\deg \mf{f}(E/F)}$ its absolute norm. The object of interest is the counting function
$$ C(F,G;X) = |\{ E/F : \Gal{E}{F} \iso G,\, \Norm{\mf{f}(E/F)} \leq X \}| $$
of field extensions $E/F$ in a fixed algebraic closure of $F$ with given finite Galois group $G$ and bounded conductor. The values of this function coincides with finite coefficient sums of the Dirichlet series 
$$ \Phi(F,G;s) = \sum\limits_{\Gal{E}{F}\iso G} \Norm{\mf{f}(E/F)}^{-s}, $$
generated by the extensions $E/F$ with Galois group $G$. For the article at hand, we are interested in field extensions with elementary abelian Galois group of exponent $p$. These extensions are called Artin-Schreier extensions.

\begin{theorem}\label{THEOREMdirichlet} Let $F$ be a global function field of characteristic $p$ and $G=\cyclic{p}^r$ be the elementary abelian group with exponent $p$ and finite rank $r$. Let 
$$ \conda{G} = \frac{1 + (p-1)r}{p} $$ 
and
$$\condb{F,G} = \begin{cases} p-1 & r=1 \\ 1 & r>1. \end{cases} $$ 
Then the Dirichlet series $\Phi(F,G;s)$ has convergence abscissa $\conda{G}$, and it possesses a meromorphic continuation, such that $ \Phi(F,G;s) \prod\nolimits_{l=2}^p \zeta_F(ls-(l-1)r)^{-1} $ is holomorphic for $\Re(s)>\conda{G}-1/(2p)$, where $\zeta_F(s)$ denotes the zeta function of $F$. The periodic pole $s=\conda{G}$ with period $2\pi i/\log(q)$ has order $\condb{F,G}$, whereas the order of every other pole on the axis $\Re(s)=\conda{G}$ does not exceed $\condb{F,G}$. 
\end{theorem}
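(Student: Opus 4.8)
\emph{Parametrisation via Artin--Schreier theory.} I would begin from the Artin--Schreier isomorphism $\mathrm{Hom}_{\mathrm{cont}}(\Gal{\bar F}{F},\ffield p)\iso V$, $V:=F/\wp(F)$ with $\wp(x)=x^p-x$, under which the extensions $E/F$ with $\Gal{E}{F}\iso\cyclic p^r$ correspond bijectively to the $r$-dimensional $\ffield p$-subspaces $W\subseteq V$, an ordered basis recovering a surjection and two ordered bases of the same $W$ differing by an element of $\Aut{\cyclic p^r}=\mathrm{GL}_r(\ffield p)$. The conductor is local: $\deg\mathfrak f(E_W/F)=\sum_v f_v(W_v)\deg v$, where $W_v\subseteq V_v:=F_v/\wp(F_v)$ is the image of $W$ and $f_v$ is the local conductor. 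With $V_v^{(j)}$ the image in $V_v$ of $\{x\in F_v:v(x)\ge-j\}$, the standard local Artin--Schreier analysis gives $f_v\equiv 0$ on $V_v^{(0)}$, $f_v\equiv j+1$ on $V_v^{(j)}\setminus V_v^{(j-1)}$ when $p\nmid j$, $V_v^{(j)}=V_v^{(j-1)}$ when $p\mid j$, and $\dim_{\ffield p}(V_v^{(j)}/V_v^{(j-1)})=[\ffield q:\ffield p]\deg v$ for $p\nmid j$. Hence $f_v(W_v)\in\{0\}\cup\{n\ge 2:n\not\equiv 1\ (p)\}$, and $f_v(W_v)\le n_v$ for all $v$ exactly when $W\subseteq V_{\mathfrak m}$, the image in $V$ of the Riemann--Roch space $L(\mathfrak m)$ with $m_v=\max(n_v-1,0)$. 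This turns $\Phi(F,G;s)$ into $\sum_{\mathfrak n}a_r(\mathfrak n)\N n^{-s}$ with $a_r(\mathfrak n)=\#\{W:\dim W=r,\ f_v(W_v)=n_v\ \forall v\}$.

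\emph{Riemann--Roch, Möbius inversion, and the Euler product.} For a reduced effective divisor $\mathfrak m$ (i.e.\ $p\nmid m_v$ whenever $m_v\ge1$), $\#\{W\subseteq V_{\mathfrak m}:\dim W=r\}=\binom{\dim_{\ffield p}V_{\mathfrak m}}{r}_p$, and from $L(\mathfrak m)\cap\wp(F)=\wp(L(\lfloor\mathfrak m/p\rfloor))$ together with $\ker\wp=\ffield p$ one gets $\dim_{\ffield p}V_{\mathfrak m}=1+[\ffield q:\ffield p]\bigl(\ell(\mathfrak m)-\ell(\lfloor\mathfrak m/p\rfloor)\bigr)$, $\ell=\dim_{\ffield q}L$. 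The poset of reduced divisors under $\le$ is a product of chains, so its Möbius function is $(-1)^{\#\text{steps}}$; inversion gives $a_r(\mathfrak n)=\sum_{T\subseteq\mathrm{supp}\,\mathfrak n}(-1)^{|T|}\binom{\dim_{\ffield p}V_{\mathfrak{m}_T}}{r}_p$, where $\mathfrak{m}_T$ is $\mathfrak{m}(\mathfrak n)$ stepped down at the places of $T$. Inserting $\ell(\mathfrak d)=\deg\mathfrak d+1-g$ (valid for $\deg\mathfrak d>2g-2$) and expanding $\binom Nr_p=\kappa_rp^{r(N-r)}\prod_{i=0}^{r-1}(1-p^{i-N})$, the leading part of $\sum_{\mathfrak n}a_r(\mathfrak n)\N n^{-s}$ — the piece supported on conductors with all $n_v\le p$, for which $\lfloor\mathfrak m/p\rfloor=0$ — factors as $\kappa_r p^{r(1-r)}q^{-rg}\prod_v L_v(s)$ with
$$ L_v(s)=1+(1-q^{-r\deg v})\sum_{m=1}^{p-1}q^{(rm-(m+1)s)\deg v}, $$
the remaining contributions (the subleading terms of the Gaussian binomial; conductors with some $n_v>p$; the Riemann--Roch corrections $\ell(K-\mathfrak d)\ne0$, $K$ a canonical divisor, carried by $\deg\mathfrak d\le2g-2$) splitting off analogously.

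\emph{Comparison with $\zeta_F$ and location of the poles.} For $l=2,\dots,p$ the coefficient of $q^{((l-1)r-ls)\deg v}$ in $L_v(s)$ is exactly $1$, being the $m=l-1$ term (where $rm=(l-1)r$, $m+1=l$). Since $\prod_{l=2}^p\zeta_F(ls-(l-1)r)=\prod_v\prod_{l=2}^p\bigl(1-q^{((l-1)r-ls)\deg v}\bigr)^{-1}$, the quotient $\prod_v L_v(s)\big/\prod_{l=2}^p\zeta_F(ls-(l-1)r)$ is the Euler product $\prod_v M_v(s)$ with $M_v(s)=L_v(s)\prod_{l=2}^p(1-q^{((l-1)r-ls)\deg v})$, and a term-by-term expansion shows $M_v(s)=1-q^{-(2ps-2(p-1)r)\deg v}+(\text{terms of strictly smaller size})$; hence $\prod_v M_v(s)$ converges absolutely, and so is holomorphic, for $\Re s>\tfrac{1+2(p-1)r}{2p}=\conda G-\tfrac1{2p}$. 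The factors $\zeta_F(ls-(l-1)r)$ then locate the poles: $\zeta_F(ls-(l-1)r)$ has a simple pole precisely where $ls-(l-1)r\in\{0,1\}+\tfrac{2\pi i}{\log q}\ZZ$, and on $\Re s=\conda G$ this forces the value $1$ and $r(l-p)=l-p$, which holds for $l=p$ unconditionally and for $l<p$ only when $r=1$. Thus on $\Re s=\conda G$ the poles of $\Phi(F,G;s)$ occur only at $s=\conda G+\tfrac{2\pi i k}{p\log q}$ ($k\in\ZZ$); those at $s=\conda G+\tfrac{2\pi i n}{\log q}$ ($n\in\ZZ$) have order exactly $\condb{F,G}$ — namely $1$ for $r>1$, and $p-1$ for $r=1$, where the $p-1$ simple poles of $\zeta_F(2s-1),\dots,\zeta_F(ps-(p-1))$ coalesce — and all others have order at most $\condb{F,G}$. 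As every $\zeta_F(ls-(l-1)r)$ is rational in $q^{-s}$ and $\prod_v M_v$ together with the correction pieces continues past $\conda G-\tfrac1{2p}$, the asserted meromorphic continuation follows; and the abscissa of convergence of $\Phi$ is $\conda G$, since the Euler product converges for $\Re s>\conda G$ and diverges at $s=\conda G$.

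\emph{Where the real work lies.} The one genuinely delicate point is that $\Phi(F,G;s)$ is \emph{not} literally an Euler product: $\dim_{\ffield p}V_{\mathfrak m}$ jumps between $1+[\ffield q:\ffield p](\deg\mathfrak m-g)$ and $1+[\ffield q:\ffield p]\deg\mathfrak m$ according as $\lfloor\mathfrak m/p\rfloor$ vanishes or not, with further corrections when $\deg\mathfrak m\le2g-2$ or $\deg\lfloor\mathfrak m/p\rfloor\le2g-2$, and $\binom Nr_p$ is not multiplicative in $\mathfrak m$. The task is therefore to split the conductor sum according to the size of $\lfloor\mathfrak m/p\rfloor$ and of $\mathfrak m$, to assemble the finitely many resulting correction series, and to check that each of them — in particular the part $\Phi_{>p}$ carried by conductors with some $n_v>p$, whose leading resonance sits at abscissa $\tfrac{1+rp}{p+2}\le\conda G-\tfrac1{2p}$ — is holomorphic in $\Re s>\conda G-\tfrac1{2p}$ once the common factors $\prod_{l=2}^p\zeta_F(ls-(l-1)r)$ are divided out; here $\tfrac1{2p}$ is exactly the surviving width, being the abscissa of convergence of $\sum_v q^{-(2ps-2(p-1)r)\deg v}$, the second-order shadow of the $\zeta_F(ps-(p-1)r)$-resonance. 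The remainder is routine estimation and the usual dictionary between Dirichlet series over $F$ and power series in $q^{-s}$.
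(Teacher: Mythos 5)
Your route---parametrising $\cyclic{p}^r$-extensions by $r$-dimensional $\ffield{p}$-subspaces of $F/\wp(F)$ and computing dimensions via Riemann--Roch---is the dual of the paper's class-field-theoretic one (ray class groups, the Einseinheitensatz), and most of your analytic superstructure (M\"obius inversion, the Euler product, the comparison with $\prod_{l=2}^p\zeta_F(ls-(l-1)r)$, the pole bookkeeping on $\Re(s)=\conda{G}$, the resonance at $\conda{G}-1/(2p)$) runs parallel to Lemmas~\ref{prop4-1}--\ref{prop4-3} and Proposition~\ref{prop4-6}. But there is a genuine gap at the one step on which everything else rests: the claimed equivalence ``$f_v(W_v)\le n_v$ for all $v$ exactly when $W\subseteq V_{\mf{m}}$'', with $V_{\mf{m}}$ the image of the \emph{global} Riemann--Roch space $L(\mf{m})$. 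Only the inclusion ``$\Leftarrow$'' holds. The group of classes in $F/\wp(F)$ lying in $V_v^{(m_v)}$ for every $v$ is in general strictly larger than the image of $L(\mf{m})$: already for $\mf{m}=0$ the former classifies all everywhere-unramified exponent-$p$ extensions and has $\ffield{p}$-dimension $1+\dim_{\ffield{p}}\Cl[p]$, whereas $\dim_{\ffield{p}}V_0=1$ (only the constant field extension is visible in $\ffield{q}/\wp(\ffield{q})$). So your local--global principle fails whenever $p\mid h_F$, and the discrepancy for general $\mf{m}$ is exactly the paper's Selmer ray group: the correct order is $p\,|S_{\mf{m}}|\,|U_{\mf{m}}|$ (Proposition~\ref{prop1}, Lemma~\ref{prop2}), not the $p\,|U_{\mf{m}}|$ your formula yields.

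This cannot be folded into your ``Riemann--Roch corrections $\ell(K-\mf{d})\ne 0$'': those adjust the size of $L(\mf{m})$ itself, not the cokernel of $L(\mf{m})\to\{x:\,x_v\in V_v^{(m_v)}\ \forall v\}$, which is an arithmetic (class-group) obstruction rather than a geometric one. The missing ingredients are precisely Lemma~\ref{prop4} (the Selmer ray group embeds into the regular logarithmic differentials, hence is trivial once $\sum_{\mf{p}^m\mid\mid\mf{m}}(m-1)\deg\mf{p}>2g_F-2$) and Lemma~\ref{prop3-2}, which isolates the modules still affected after M\"obius inversion: these are the $\mf{m}_0\mf{m}_1^2$ with $\mf{m}_0$ in a finite set but $\mf{m}_1$ an \emph{arbitrary} squarefree module supported on primes of large degree---an infinite family, whose contribution assembles into the rational error term $\Upsilon_r(s)$ of Lemma~\ref{prop4-1} carrying the factor $\zeta_F(2s)^{-1}$ (harmless by Hasse--Weil, zeros at $\Re(s)=1/4$). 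Without this, the coefficients of your Dirichlet series are wrong on infinitely many conductors, not finitely many, and the claimed decomposition into a main Euler product plus ``routine'' correction series does not close. The remainder of your outline (truncation at conductor exponent $p$, cancellation against the zeta factors, order $p-1$ of the pole for $r=1$ versus $1$ for $r>1$) is sound and consistent with the paper.
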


The series $\Phi(F,G;s)$ is a power series in $t=q^{-s}$, which is convergent for $|t|<q^{-\conda{G}}$, and its poles are located at $t=\xi q^{-\conda{G}}$ for finitely many roots of unity $\xi$. If $c_n$ denotes the $n$-th power series coefficient of $\Phi(F,G;s)$, we obtain $C(F,G;q^m)=\sum\nolimits_{n=0}^m c_n$. By an application of the Cauchy integral formula, which is elaborated in Theorem~\ref{A5}, this coefficient sum has the following asymptotic behaviour.

\begin{theorem}\label{THEOREMasymptoticCond} Let $F$ be a global function field of characteristic $p$ and $G=\cyclic{p}^r$ be the elementary abelian group with exponent $p$ and finite rank $r$. Then the number of Artin-Schreier extensions $E/F$ with Galois group $G$ has asymptotic
$$ C(F,G;X) \sim \condc{F,G} \, X^{\conda{G}} \, \log(X)^{\condb{F,G}-1} $$
with some explicitly computable constant $\condc{F,G}>0$.
\end{theorem}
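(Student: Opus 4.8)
The plan is to read the asymptotics of the coefficient sums of $\Phi(F,G;s)$ off its meromorphic continuation by means of the Cauchy integral formula of Theorem~\ref{A5}, with the pole data taken from Theorem~\ref{THEOREMdirichlet}.

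First I would set up the generating function. Since relative conductors have norm a power of $q$, the step function $X\mapsto C(F,G;X)$ is constant on each interval $[q^m,q^{m+1})$, so it suffices to treat $X=q^m$; writing $\Phi(F,G;s)=\sum_{n\ge0}c_nt^n$ with $t=q^{-s}$ gives $C(F,G;q^m)=\sum_{n=0}^{m}c_n$, the coefficient $[t^m]g(t)$ of $t^m$ in $g(t):=(1-t)^{-1}\Phi(F,G;s)$. By Theorem~\ref{THEOREMdirichlet}, $g$ is holomorphic for $|t|<q^{-\conda{G}}$, has nonnegative power series coefficients and radius of convergence exactly $q^{-\conda{G}}$, and continues meromorphically to $|t|<q^{-\conda{G}+1/(2p)}$; there its singularities are the pole at $t=q^{-\conda{G}}$ of order exactly $\condb{F,G}$, supplied by $\prod_{l=2}^{p}\zeta_F(ls-(l-1)r)$ (the zero $t=1$ of $1-t$ lies outside, as $\conda{G}>0$), together with finitely many further poles on $|t|=q^{-\conda{G}}$, each of order at most $\condb{F,G}$. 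Moreover $\Phi(F,G;s)=H(s)\prod_{l=2}^{p}\zeta_F(ls-(l-1)r)$ with $H$ holomorphic on $\Re(s)\ge\conda{G}$, and the same factorization holds for $g$ since $1-t$ is nonzero there.

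Second I would apply Theorem~\ref{A5}: expressing $[t^m]g(t)$ as a contour integral over a small circle and expanding to radius $q^{-\conda{G}+\varepsilon}$ for fixed $0<\varepsilon<1/(2p)$ collects the residues at the poles on $|t|=q^{-\conda{G}}$ and bounds the remaining integral by $O\bigl(q^{m(\conda{G}-\varepsilon)}\bigr)$. The residue at the order-$\condb{F,G}$ pole $t=q^{-\conda{G}}$ is $P(m)\,q^{m\conda{G}}$ with $\deg P=\condb{F,G}-1$; substituting $q^m=X$ and $m=\log X/\log q$ in its leading term yields the claimed main term $\condc{F,G}\,X^{\conda{G}}\log(X)^{\condb{F,G}-1}$. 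Each further pole $t=\xi q^{-\conda{G}}$, $\xi\ne1$, contributes a term $Q_\xi(m)\,\xi^{-m}q^{m\conda{G}}$ with $\deg Q_\xi<\condb{F,G}$, hence $o\bigl(q^{m\conda{G}}m^{\condb{F,G}-1}\bigr)$, and the asymptotic drops out.

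Third, two matters remain. For the constant: from $g(t)=a_{-\condb{F,G}}(t-q^{-\conda{G}})^{-\condb{F,G}}+\dots$ near the dominant pole, a direct residue computation expresses $\condc{F,G}$ through $a_{-\condb{F,G}}$, $q^{\conda{G}}$, $(\condb{F,G}-1)!$ and $\log q$; and since $g$ has nonnegative coefficients and radius of convergence exactly $q^{-\conda{G}}$, it is nondecreasing on $[0,q^{-\conda{G}})$ and tends to $+\infty$ there, which forces $(-1)^{\condb{F,G}}a_{-\condb{F,G}}>0$, whence $\condc{F,G}>0$. I expect the actual obstacle to be the claim in the second step that the subdominant poles on $\Re(s)=\conda{G}$ genuinely have smaller order. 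Because $H$ is holomorphic on $\Re(s)\ge\conda{G}$, the order of $g$ at any such subdominant pole $\xi q^{-\conda{G}}$ is bounded by the order there of $\prod_{l=2}^{p}\zeta_F(ls-(l-1)r)$; for $r=1$ this equals $\#\{l\in\{2,\dots,p\}:\xi^{l}=1\}$, which is strictly less than $p-1=\condb{F,G}$, and that settles the case $r=1$. For $r>1$ only the factor $\zeta_F(ps-(p-1)r)$ has poles on the line $\Re(s)=\conda{G}$, while $\condb{F,G}=1$, so one needs the stronger fact that $\Phi(F,G;s)$ has no pole on $\Re(s)=\conda{G}$ besides the periodic one $s=\conda{G}$ — equivalently that $H$ vanishes at the off-trivial poles $s=\conda{G}+2\pi ik/(p\log q)$ ($p\nmid k$) of $\zeta_F(ps-(p-1)r)$. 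This must be drawn from the proof of Theorem~\ref{THEOREMdirichlet}; without it one would obtain only an estimate oscillating with $m$ rather than the clean single-term asymptotic.
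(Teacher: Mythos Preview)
Your overall strategy---read off coefficient-sum asymptotics from the pole data of $\Phi(F,G;s)$ via the Cauchy integral machinery of Theorem~\ref{A5}---is exactly what the paper does. The gap is in your treatment of the secondary poles on $|t|=q^{-\conda{G}}$. In step~2 you assert that every pole $t=\xi q^{-\conda{G}}$ with $\xi\neq 1$ has order strictly below $\condb{F,G}$, and in step~3 you try to justify this. But the claim is simply false in general: Theorem~\ref{THEOREMdirichlet} only says those orders \emph{do not exceed} $\condb{F,G}$, and Proposition~\ref{prop4-6}(b) shows that for $r>1$ all $p$ poles on the circle have the same order~$1=\condb{F,G}$ (likewise for $p=2$, $r=1$ there are two simple poles $t=\pm R$). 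So the vanishing of $H$ at the off-trivial poles that you hope to extract from the proof of Theorem~\ref{THEOREMdirichlet} is not there, and cannot be.

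The resolution is not analytic but definitional: reread the paragraph after Theorem~\ref{THEOREMasymptoticCond}. The paper uses $f(X)\sim g(X)$ to mean only that $\lim_{n\to\infty} f(q^{\ell n+e})/g(q^{\ell n+e})=1$ for \emph{some} arithmetic progression $\ell n+e$, and explicitly notes that $\condc{F,G}$ is not unique but depends on the progression chosen. Theorem~\ref{A5}(b) is tailored to this: when the poles sit at $t=R\xi^{-j}$ for $\xi$ an $\ell$-th root of unity, the contributions $\xi^{jm}$ are constant along each residue class of $m$ modulo~$\ell$, and by linear independence of characters the combined leading constant is nonzero for at least one such class. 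That, together with $c_n\ge 0$, gives the asymptotic in the paper's sense and the positivity of $\condc{F,G}$. Your argument for $r=1$, $p\neq 2$ is fine and in that case one even gets the ordinary asymptotic (Theorem~\ref{A5}(c)); for the remaining cases you should invoke Theorem~\ref{A5}(b) rather than attempt to suppress the other poles.
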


The usage of asymptotic equivalence $f(X)\sim g(X)$ needs further comment. Usually this relation is defined by $\lim\nolimits_{n\rightarrow\infty} f(n)/g(n) = 1$. Here, in the function field case that is, the counting functions $f(X)$ are step functions in $X=q^n$, where $n$ runs over some arithmetic progression. Hence, the limit $\lim\nolimits_{n\rightarrow\infty} f(n)/g(n)$ would not exist for continuous functions $g(X)$. Therefore, I write $f(X)\sim g(X)$ by abuse of notation, 
if there are integers $\ell\geq 1$, $e\geq 0$ with $\lim\nolimits_{n\rightarrow\infty} f(q^{\ell n+e})/g(q^{\ell n+e}) = 1$. Analogously, I will make use of $f(X)\in\grossOmega{g(X)}$ for $g(q^{\ell n+e})\in\grossO{f(q^{\ell n+e})}$, and $f(X)\in\grossTheta{g(X)}$ for $f(q^{\ell n+e})\in\grossOmega{g(q^{\ell n+e})}\cap\grossO{g(q^{\ell n+e})}$.
This redefinition is useful if one likes to compare function field asymptotics with number field asymptotics. Note that the constant $\condc{F,G}$ in above theorem is not unique, varying by some $q$-powers, which depend on the chosen arithmetic progression. Disregarding this dichotomy, we obtain following formulas to compute $\condc{F,G}$ for some arithmetic progression.

\begin{auxtheorem}\label{THEOREMp2} For $p=2$ and $G=\cyclic{2}^r$, we have
$$ \condc{F,G} = \frac{|G|}{|\Aut{G}|}\, \frac{\log(q)}{1-q^{-(r+1)}}\,  \frac{\zeta_F(1)}{\zeta_F(r+1)} $$
where $\zeta_F(1)$ denotes the residue $\Res_{s=1}(\zeta_F(s))$.
\end{auxtheorem}

\begin{auxtheorem}\label{THEOREMr1} For $p\neq 2$ and $G=\cyclic{p}$, we have
$$ \condc{F,G} = \frac{1}{(p-2)!}\, \frac{|G|}{|\Aut{G}|}\, \frac{\log(q)}{1-q^{-1}}\, \frac{\zeta_F(1)^{p-1}}{p!}\, \prod\limits_{\mf{p}} \big(1+(p-1) \, \N{p}^{-1}\big)\, \big(1-\N{p}^{-1}\big)^{p-1}    $$
where the product runs over the primes of $F$. 
\end{auxtheorem}

Formulas for $\condc{F,\cyclic{p}^r}$ with $p\neq 2$, $r\neq 1$ are within reach, but omitted, as they are less handy. Theorem~\ref{THEOREMasymptoticCond} has also consequences on the distribution of discriminants $\mf{d}(E/F)$ of Artin-Schreier extensions $E/F$, which shall be elaborated and be compared to other known results in the second section. Particularly Theorem~\ref{THEOREMasymptoticDisc} and Theorem~\ref{THEOREMasymptoticDiscBound} are very interesting with regard of the Malle conjecture, which is also introduced in the second section.  After introducing some required notations and facts on class field theory, the proof of Theorem~\ref{THEOREMdirichlet} starts with the fourth section, where the $p$-rank of ray class groups is computed. This leads to a nice local-global principle for Artin-Schreier extensions, and an explicit formula for the number $c_{\mf{m}}$ of extensions with given conductor $\mf{m}$ in the fifth section. In the sixth section then, we are able to give an explicit description of the analytic behaviour of the Dirichlet series $ \Phi(F,G;s)$, which is generated by $c_{\mf{m}}$. This leads to a proof of Theorem~\ref{THEOREMdirichlet}. In the last section, Theorem~\ref{THEOREMasymptoticCond} and its addenda are proven, where it is the harder part to determine the constant $\condc{F,G}$. Finally, the appendix contains a  tauberian theorem for power series.

\section{Consequences on the distribution of discriminants}

For given global function field $F$ and finite group $G$, consider the number of field extensions $E/F$ in a fixed algebraic closure of $F$ with Galois group $G$ and bounded discriminant $\Norm{\mf{d}(E/F)} = q^{\deg \mf{d}(E/F)}$, that is the function
$$ Z(F,G;X) = |\{ E/F : \Gal{E}{F} \iso G,\, \Norm{\mf{d}(E/F)} \leq X \}|. $$ 
For the elementary abelian group $G=\cyclic{p}^r$, let
$$ a_p(G) = \frac{\conda{G}}{p^r-1} = \frac{1 + (p-1)r}{p(p^r-1)}.  $$
Then we obtain the following theorems as corollaries of Theorem~\ref{THEOREMasymptoticCond} and the F\"uhrerdiskriminantenformel.\footnote{E.g. see formula~(1.1) in \cite{Wright1989}.}

\begin{theorem}\label{THEOREMasymptoticDisc} Let $F$ be a global function field of characteristic~$p$ and $G=\cyclic{p}$ be the simple cyclic group of order $p$. Then the number of Artin-Schreier extensions $E/F$ with Galois group $G$ has asymptotic
$$ Z(F,G;X) \sim c(F,G) \, X^{a_p(G)} \, \log(X)^{\condb{F,G}-1} $$
with $c(F,G)=\condc{F,G}/(p-1)^{\condb{F,G}-1}$.
\end{theorem}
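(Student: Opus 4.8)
The plan is to deduce Theorem~\ref{THEOREMasymptoticDisc} directly from Theorem~\ref{THEOREMasymptoticCond}, exploiting the fact that for a cyclic extension of prime degree the conductor and the discriminant carry the same information. First I would invoke the F\"uhrerdiskriminantenformel (formula~(1.1) in \cite{Wright1989}): for $E/F$ with $\Gal{E}{F}\iso G=\cyclic{p}$ one has $\mf{d}(E/F)=\prod_{\chi}\mf{f}(\chi)$, the product ranging over the characters $\chi$ of $\Gal{E}{F}$. The trivial character contributes the unit ideal, while each of the remaining $p-1$ characters is faithful and hence cuts out the full extension $E/F$, so that $\mf{f}(\chi)=\mf{f}(E/F)$. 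Consequently $\mf{d}(E/F)=\mf{f}(E/F)^{p-1}$ and $\Norm{\mf{d}(E/F)}=\Norm{\mf{f}(E/F)}^{p-1}$.

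Next I would translate this into a relation between counting functions. Since $\Norm{\mf{f}(E/F)}$ is an integral power of $q$, the condition $\Norm{\mf{d}(E/F)}\leq X$ is equivalent to $\Norm{\mf{f}(E/F)}\leq X^{1/(p-1)}$ and selects exactly the same extensions; thus $Z(F,G;q^m)=C(F,G;q^{\lfloor m/(p-1)\rfloor})$, and in particular $Z(F,G;X)=C(F,G;X^{1/(p-1)})$ whenever $X$ is a power of $q^{p-1}$. Writing $g(Y)=\condc{F,G}\,Y^{\conda{G}}\log(Y)^{\condb{F,G}-1}$ for the asymptotic main term supplied by Theorem~\ref{THEOREMasymptoticCond} and substituting $Y=X^{1/(p-1)}$, one computes
$$ g\big(X^{1/(p-1)}\big)=\condc{F,G}\,X^{\conda{G}/(p-1)}\Big(\tfrac{1}{p-1}\Big)^{\condb{F,G}-1}\log(X)^{\condb{F,G}-1}. $$
Since $r=1$ here, $p^r-1=p-1$, so $\conda{G}/(p-1)=a_p(G)$ by the very definition of $a_p(G)$, and the constant collapses to $c(F,G)=\condc{F,G}/(p-1)^{\condb{F,G}-1}$, exactly as asserted.

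The one point requiring care, which I would handle explicitly, is the bookkeeping dictated by the abuse-of-notation meaning of $\sim$: Theorem~\ref{THEOREMasymptoticCond} only gives $C(F,G;q^{\ell n+e})/g(q^{\ell n+e})\to1$ along some arithmetic progression, so I must choose the progression along which $Z$ is compared so that the corresponding values $X^{1/(p-1)}$ run through a progression admissible for $C$; concretely, restricting $m$ to the class $0\bmod(p-1)$ removes the floor in $q^{\lfloor m/(p-1)\rfloor}$ and makes the substitution above exact in the limit. Beyond this indexing there is no obstacle and no new analytic input: the theorem is a formal corollary of Theorem~\ref{THEOREMasymptoticCond} and the F\"uhrerdiskriminantenformel.
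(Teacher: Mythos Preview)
Your proof is correct and follows the same route as the paper: invoke the F\"uhrerdiskriminantenformel to get $\mf{d}(E/F)=\mf{f}(E/F)^{p-1}$, rewrite $Z(F,\cyclic{p};X)=C(F,\cyclic{p};X^{1/(p-1)})$, and then substitute into Theorem~\ref{THEOREMasymptoticCond}, with the factor $(p-1)^{-(\condb{F,G}-1)}$ arising from $\log(X^{1/(p-1)})=\log(X)/(p-1)$. Your explicit discussion of the characters and of the arithmetic-progression bookkeeping is more detailed than the paper's own proof but not otherwise different.
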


\begin{proof} By the F\"uhrerdiskriminantenformel, a $\cyclic{p}$-extension with conductor $\mf{f}$ has discriminant $\mf{d}=\mf{f}^{p-1}$. We hence obtain the desired asymptotic formula from Theorem~\ref{THEOREMasymptoticCond} and
$$ Z(F,\cyclic{p};X) = C(F,\cyclic{p};X^{1/(p-1)}). $$
Note that the factor $1/(p-1)^{\condb{F,G}-1}$ of $c(F,G)$ originates from $\log(X^{1/(p-1)})=\log(X)/(p-1)$.  
\end{proof}

\begin{theorem}\label{THEOREMasymptoticDiscBound}
Let $F$ be a global function field of characteristic~$p$ and $G=\cyclic{p}^r$ be the elementary abelian group with exponent $p$ and finite rank $r$. Then the number of Artin-Schreier extensions $E/F$ with Galois group $G$ has asymptotic
$$ Z(F,G;X) \in \grossOmega{X^{a_p(G)}}. $$
\end{theorem}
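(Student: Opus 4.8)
The plan is to bound the discriminant of an Artin-Schreier extension from above by a fixed power of its conductor, and then to feed this into the asymptotic lower bound for conductors supplied by Theorem~\ref{THEOREMasymptoticCond}.

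First I would invoke the F\"uhrerdiskriminantenformel for an abelian extension $E/F$ with $\Gal{E}{F}\iso G=\cyclic{p}^r$: the discriminant is the product $\mf{d}(E/F)=\prod_{\chi}\mf{f}(\chi)$ over the $p^r-1$ nontrivial characters $\chi$ of $G$, whereas the relative conductor $\mf{f}(E/F)$ is the least common multiple of the $\mf{f}(\chi)$, so that the exponent of any prime $\mf{p}$ in $\mf{f}(E/F)$ equals the maximum over $\chi$ of its exponents in the $\mf{f}(\chi)$. Since the exponent of $\mf{p}$ in $\mf{d}(E/F)$ is the sum of those $p^r-1$ exponents, it is at most $p^r-1$ times their maximum; summing over $\mf{p}$ gives $\deg\mf{d}(E/F)\leq(p^r-1)\deg\mf{f}(E/F)$, hence $\Norm{\mf{d}(E/F)}\leq\Norm{\mf{f}(E/F)}^{\,p^r-1}$.

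It follows that every field counted by $C(F,G;Y)$ is also counted by $Z(F,G;Y^{p^r-1})$, that is, $Z(F,G;X)\geq C(F,G;X^{1/(p^r-1)})$. Because $\condb{F,G}\in\{1,p-1\}$ is at least $1$, the log-power in Theorem~\ref{THEOREMasymptoticCond} is bounded below by a positive constant for large argument, so $C(F,G;Y)\in\grossOmega{Y^{\conda{G}}}$; substituting $Y=X^{1/(p^r-1)}$ and using $a_p(G)=\conda{G}/(p^r-1)$ then gives $Z(F,G;X)\in\grossOmega{X^{a_p(G)}}$. To make this substitution legitimate in the function field sense of $\grossOmega{\cdot}$, I would restrict to an arithmetic progression $X=q^{m}$ with $m=(p^r-1)(\ell n+e)$, choosing $\ell$ and $e$ so that $q^{\ell n+e}$ traverses the progression on which Theorem~\ref{THEOREMasymptoticCond} delivers the asymptotic for $C(F,G;\cdot)$; then $X^{1/(p^r-1)}=q^{\ell n+e}$ sits exactly on that progression.

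I do not anticipate a genuine obstacle here. The only care needed is the bookkeeping with the arithmetic progressions packed into the redefined symbol $\grossOmega{\cdot}$, together with the observation that the crude estimate $\deg\mf{d}\leq(p^r-1)\deg\mf{f}$ already suffices. Pinning down the true order of growth of $Z(F,G;X)$ when $r>1$ is genuinely harder, as it would require control of the joint distribution of the conductor exponents of all $p^r-1$ characters at each prime, rather than merely their maximum; that is precisely the question left open.
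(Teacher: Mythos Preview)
Your argument is correct and follows essentially the same route as the paper: bound $\mf{d}(E/F)\mid\mf{f}(E/F)^{p^r-1}$ via the F\"uhrerdiskriminantenformel, deduce $Z(F,G;X)\geq C(F,G;X^{1/(p^r-1)})$, and apply Theorem~\ref{THEOREMasymptoticCond}. You have merely been more explicit about the character-by-character bound and the arithmetic-progression bookkeeping than the paper's two-line proof.
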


\begin{proof} By the F\"uhrerdiskriminantenformel, a $\cyclic{p}^r$-extension with conductor $\mf{f}$ has discriminant $\mf{d}\mid \mf{f}^{m}$ with $m=p^r-1$, where equality corresponds to the case, in which every cyclic subextension has conductor $\mf{f}$. We hence obtain 
$$ Z(F,\cyclic{p}^r;X) \geq C(F,\cyclic{p}^r;X^{1/m}) \in \grossOmega{X^{a_p(\cyclic{p}^r)}}, $$
by Theorem~\ref{THEOREMasymptoticCond}.
\end{proof}

In the same matter, we are able to give an upper bound for $Z(F,G;X)$, too. Therefore, let
$$ d_p(G) = \frac{\conda{G}}{p^r-p^{r-1}} = a_p(G)\, \frac{p^r-p^{r-1}}{p^r-1} $$
for the elementary abelian group $G=\cyclic{p}^r$.

\begin{theorem}\label{THEOREMasymptoticDiscBoundUpper}
Let $F$ be a global function field of characteristic~$p$ and $G=\cyclic{p}^r$ be the elementary abelian group with exponent $p$ and finite rank $r$. Then the number of Artin-Schreier extensions $E/F$ with Galois group $G$ has asymptotic
$$ Z(F,G;X) \in \grossO{X^{d_p(G)}}. $$
\end{theorem}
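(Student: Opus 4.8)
The plan is to exploit the Führerdiskriminantenformel in the opposite direction from Theorem~\ref{THEOREMasymptoticDiscBound}: instead of bounding the discriminant from above by a power of the conductor, I would bound it from below. For a $\cyclic{p}^r$-extension $E/F$ the conductor $\mf{f}=\mf{f}(E/F)$ is the least common multiple of the conductors $\mf{f}_\chi$ of the $p^r-1$ nontrivial characters $\chi$ of $G$, while the discriminant satisfies $\deg\mf{d}(E/F)=\sum_\chi \deg\mf{f}_\chi$. At each prime $\mf{p}$ dividing $\mf{f}$, the local conductor exponent $f_{\mf{p}}(\chi)$ is at least $1$ for every $\chi$ whose restriction to the decomposition group is nontrivial; since the ramification is elementary abelian of exponent $p$, the subgroup of characters trivial at $\mf{p}$ has index a power of $p$, at least $p$, so at least $p^r-p^{r-1}$ of the characters are ramified at $\mf{p}$ and contribute at least $p^r-p^{r-1}$ to $\deg\mf{d}(E/F)$ over that prime. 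Summing over the primes dividing $\mf{f}$ and using that the conductor exponent at a wildly ramified prime equals the maximum of the $f_{\mf{p}}(\chi)$, I would obtain the key comparison $\deg\mf{d}(E/F)\geq (p^r-p^{r-1})\deg\mf{f}(E/F)$, i.e. $\Norm{\mf{d}(E/F)}\geq \Norm{\mf{f}(E/F)}^{p^r-p^{r-1}}$.

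Granting that comparison, the counting argument is immediate and parallels the proof of Theorem~\ref{THEOREMasymptoticDiscBound}. Every $E/F$ with $\Norm{\mf{d}(E/F)}\leq X$ has $\Norm{\mf{f}(E/F)}\leq X^{1/(p^r-p^{r-1})}$, so
$$ Z(F,\cyclic{p}^r;X) \leq C\big(F,\cyclic{p}^r;X^{1/(p^r-p^{r-1})}\big). $$
By Theorem~\ref{THEOREMasymptoticCond} the right-hand side is $\grossO{X^{\conda{G}/(p^r-p^{r-1})}\log(X)^{\condb{F,G}-1}} = \grossO{X^{d_p(G)}\log(X)^{\condb{F,G}-1}}$, and absorbing the logarithmic factor into the $\grossO{\cdot}$ by replacing the exponent with a slightly larger value — or, more cleanly, noting $d_p(G)$ is the claimed bound and the log power is subpolynomial — yields $Z(F,G;X)\in\grossO{X^{d_p(G)}}$, as desired.

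The main obstacle is the local ramification estimate at the heart of the conductor–discriminant comparison, namely that at every ramified prime at least a fraction $1-1/p$ of the nontrivial characters are ramified, so that the extreme case $\mf{d}=\mf{f}^{p^r-p^{r-1}}$ is the one that is hardest to ramify — it corresponds to each prime ramifying in a subextension of degree exactly $p$. One must check this holds uniformly, including at wildly ramified primes where higher ramification groups intervene: there the conductor exponent $f_{\mf{p}}(\chi)$ can exceed $1$, but since $\deg\mf{d}$ over $\mf{p}$ is $\sum_\chi f_{\mf{p}}(\chi)\geq \#\{\chi \text{ ramified at }\mf{p}\}\cdot\max_\chi f_{\mf{p}}(\chi)/(\text{something})$ — more precisely one uses that the jumps in the ramification filtration force $\sum_\chi f_{\mf{p}}(\chi)\geq (p^r-p^{r-1})\max_\chi f_{\mf{p}}(\chi)$, which is exactly the inequality needed. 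Once this local inequality is in hand the global statement and the asymptotic bound follow formally.
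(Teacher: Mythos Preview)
Your proposal is correct and follows the same overall strategy as the paper: establish $\Norm{\mf{d}(E/F)}\geq\Norm{\mf{f}(E/F)}^{p^r-p^{r-1}}$ via the conductor--discriminant formula, then bound $Z$ by $C$ at the rescaled argument and invoke Theorem~\ref{THEOREMasymptoticCond}. The paper phrases the discriminant lower bound globally, by selecting a cyclic subextension $E_1/F$ with $\mf{f}(E_1/F)=\mf{f}$ and reading off the factor $\mf{f}^{(p-1)p^{r-1}}$ from the tower formula $\mf{d}(E/F)=N_{E_1/F}(\mf{d}(E/E_1))\cdot\mf{d}(E_1/F)^{p^{r-1}}$. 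Your local formulation via characters---at each prime $\mf{p}$ the characters of submaximal local conductor exponent form a proper subgroup of $\hat G$, so at least $p^r-p^{r-1}$ characters attain $\max_\chi f_{\mf{p}}(\chi)$---is in fact the more robust justification: a single cyclic subfield carrying the full global conductor need not exist (for instance $p=2$, $r=2$, with three ramified primes whose inertia groups are the three distinct order-$2$ subgroups of $G$), so the paper's global phrasing tacitly relies on the prime-by-prime count you make explicit. The key subgroup fact you gesture at is straightforward: characters with $f_{\mf{p}}(\chi)<f_0$ are exactly those trivial on a fixed ramification subgroup, hence form a subgroup of $\hat G$, necessarily proper since some $\chi$ has $f_{\mf{p}}(\chi)=f_0$. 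Your treatment of the residual logarithmic factor matches the paper's; for $r\geq 2$ one has $\condb{F,G}=1$ and no such factor appears, while for $r=1$ the sharper Theorem~\ref{THEOREMasymptoticDisc} is available anyway.
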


\begin{proof} Let $E/F$ be a $\cyclic{p}^r$-extension with conductor $\mf{f}$ and discriminant $\mf{d}$, and let $E_1,\ldots,E_r$ be a minimal set of cyclic subextensions with $E=E_1\cdots E_r$. Since $E/F$ has conductor $\mf{f}$, one of the generating subfields has conductor $\mf{f}$, too. Let $E_1$ be that field. By the F\"uhrerdiskriminantenformel, we now obtain $\mf{f}^{m}\mid\mf{d}$ with $m=p^r-p^{r-1}$, where equality corresponds to the case, in which $E_2,\ldots,E_r$ are unramified. Hence, every $\cyclic{p}^r$-extension with discriminant norm $X$ has conductor norm $X^{1/m}$ at most. This results in
$$ Z(F,\cyclic{p}^r;X) \leq C(F,\cyclic{p}^r;X^{1/m}) \in \grossO{X^{d_p(\cyclic{p}^r)}}, $$ 
by Theorem~\ref{THEOREMasymptoticCond}.
\end{proof} 

As the number of extensions with given conductor grows at least linearly with its norm, it is more likely that all generating cyclic subextensions have the same conductor than that all but one are unramified. That means, the situation as considered for the lower bound happens far more often than the situation as considered for the upper bound. Hence, one might ask if $a_p(G)$ already gives the $X$-exponent of $Z(F,G;X)$. Lets now compare the results with the distribution of abelian groups in general. For the simple cyclic group, the formula of $c(F,\cyclic{p})$ gives a comparable function field result to the number field formula of Henri Cohen et al. \cite{CohenDiazOlivierDDCEPD}, which is not completely analogous though. For abelian groups with order coprime to the characteristic, we obtain the following theorem by the results of David Wright \cite{Wright1989}. His work only lacks a handy asymptotic formula for $Z(F,G;X)$ in the function field case, which we get by the redefinition of asymptotic equivalence as described in the beginning of this article.

\begin{theorem*}[Wright \cite{Wright1989}]
Let $F$ be a global function or number field of characteristic~$p\geq 0$ and $G$ be a finite abelian group. Let $\ell$ be the smallest prime divisor of $|G|$, $G[\ell]$ the $\ell$-torsion group of $G$, and $\tilde{F}$ the splitting field of $X^{\ell}-1$ over $F$. Then we define
$$ a(G) = \frac{\ell}{|G|  (\ell-1)}, \qquad b(F,G) = \frac{|G[\ell]| - 1}{\fdindex{\tilde{F}}{F}}. $$
Now assume $p\nmid |G|$. Then there is a positive constant $c(F,G)$, such that the number of $G$-extensions $E/F$ has asymptotic
$$ Z(F,G;X) \sim c(F,G) \, {X^{a(G)}\, \log(X)^{b(F,G)-1}}. $$
\end{theorem*}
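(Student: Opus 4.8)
The plan is to reconstruct Wright's analysis of the discriminant Dirichlet series $\Psi(F,G;s)=\sum_{\Gal{E}{F}\iso G}\Norm{\mf{d}(E/F)}^{-s}$ and then to apply the power-series tauberian theorem of the appendix. By class field theory the $G$-extensions $E/F$ correspond bijectively to the $\Aut{G}$-orbits of continuous surjections $\phi$ from the idele class group of $F$ onto $G$, and by the F\"uhrerdiskriminantenformel one has $\Norm{\mf{d}(E/F)}=\prod_{\chi}\Norm{\mf{f}(\chi\circ\phi)}$, the product taken over all characters $\chi$ of $G$, where $\mf{f}(\chi\circ\phi)$ is the conductor of the associated finite-order Hecke character. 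Since the $v$-part of each $\mf{f}(\chi\circ\phi)$ depends only on the local component $\phi_v\colon F_v^\times\to G$, an Euler-type computation — carried out rigorously in Wright's paper, including the treatment of the global constraint that $\phi$ be trivial on principal ideles — shows that $\Psi(F,G;s)$ behaves, near its abscissa of convergence, like a product over the places of $F$ with local factor $\sum_{\phi_v}\Norm{v}^{-s\,d_v(\phi_v)}$, where $d_v(\phi_v)$ is the exponent of $v$ in $\prod_{\chi}\mf{f}(\chi\circ\phi_v)$. In the function field case every $\phi_v$ is tame because $p\nmid|G|$; in the number field case the same holds away from the finitely many primes above the rational divisors of $|G|$, whose local factors are holomorphic and affect only the final constant.

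First I would evaluate the tame local factors. An unramified $\phi_v$ has $d_v=0$; a ramified one has cyclic inertia image $I_v\le G$ and $d_v(\phi_v)=|G|-|G|/|I_v|$, because $\chi\circ\phi_v$ is ramified precisely for the $\chi$ nontrivial on $I_v$. The cheapest ramified term therefore has $|I_v|=\ell$, the smallest prime divisor of $|G|$, giving $d_v=|G|(\ell-1)/\ell$; a local homomorphism with inertia image $\iso\cyclic{\ell}$ exists only when $\mu_\ell\subseteq F_v$, i.e. when $v$ splits completely in $\tilde{F}=F(\mu_\ell)$, and there are then exactly $|G|\,(|G[\ell]|-1)$ of them, the factor $|G|$ recording the free choice of a Frobenius and $|G[\ell]|-1$ being the number of elements of order $\ell$ in $G$. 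Hence for $v$ split in $\tilde{F}$ and coprime to $|G|$ the local factor is $|G|\bigl(1+(|G[\ell]|-1)\Norm{v}^{-s|G|(\ell-1)/\ell}+\cdots\bigr)$, with the omitted terms carrying strictly larger $\Norm{v}$-exponents, while every other place contributes a factor holomorphic for $\Re(s)$ to the right of the value in question. As the primes splitting completely in $\tilde{F}$ have density $1/\fdindex{\tilde{F}}{F}$, passing to logarithms shows that $\log\Psi(F,G;s)$ agrees, up to a summand holomorphic past $\Re(s)=a(G)$, with $(|G[\ell]|-1)\sum_{v\text{ split in }\tilde{F}}\Norm{v}^{-s|G|(\ell-1)/\ell}$, which has a logarithmic singularity at $s=a(G)=\ell/(|G|(\ell-1))$. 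Consequently $\Psi(F,G;s)$ continues meromorphically to a neighbourhood of $\Re(s)=a(G)$ with a pole of order $b(F,G)=(|G[\ell]|-1)/\fdindex{\tilde{F}}{F}$ at $s=a(G)$ and no singularity of larger order on that line, and the leading Laurent coefficient is positive since every local factor is a power series with nonnegative coefficients.

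It remains to convert this into the stated asymptotic. In the number field case this is Wright's tauberian step. In the function field case $\Psi(F,G;s)$ is a power series in $t=q^{-s}$ with radius of convergence $q^{-a(G)}$ whose only singularities on $|t|=q^{-a(G)}$ lie at roots of unity times $q^{-a(G)}$, the one at $t=q^{-a(G)}$ being a pole of order $b(F,G)$; Theorem~\ref{A5} applied to the coefficient sums then gives $Z(F,G;X)\sim c(F,G)\,X^{a(G)}\log(X)^{b(F,G)-1}$ in the redefined sense, with $c(F,G)>0$. The main obstacle is the combinatorial bookkeeping compressed into the first two paragraphs: treating the conductor-discriminant product over all characters of $G$ simultaneously, performing the Möbius inversion over the subgroup lattice of $G$ that restricts $\phi$ to be surjective without losing the main term, and identifying the local weight at a split prime as exactly $|G[\ell]|-1$; this is precisely the content of Wright's estimates, which we take as established.
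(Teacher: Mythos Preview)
Your proposal is correct and follows the same route as the paper: invoke Wright's analysis of the discriminant Dirichlet series for its pole structure, and in the function field case feed this into the power-series tauberian theorem (Theorem~\ref{A5}) to extract the asymptotic. The paper's own proof is a two-line citation---Theorem~I.2 of Wright for number fields, Theorem~I.1 plus the discussion preceding Theorem~I.3 of Wright together with Theorem~\ref{A5} for function fields---whereas you have helpfully unpacked the local computation (the tame conductor exponent $|G|(\ell-1)/\ell$, the count $|G|(|G[\ell]|-1)$ of local characters with inertia of order $\ell$ at primes split in $\tilde{F}$, and the resulting pole order $b(F,G)$ via Chebotarev density), while still deferring the M\"obius inversion over subgroups and the global-constraint bookkeeping to Wright; so the substance is identical, just with more of the mechanism made visible.
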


\begin{proof} In the number field case, this is just Theorem I.2 loc. cit. In the function field case, this is the consequence of Theorem I.1 loc. cit., the discussion before Theorem I.3 loc. cit., and a tauberian theorem like Theorem \ref{A5}.
\end{proof}

This result is restricted to the case $p\nmid |G|$. David Wright conjectured, that the case $p \mid |G|$ should behave analogously, but omitted the calculations as they are more involved. His formulas do indeed extend for the simple cyclic group $\cyclic{p}$, and might also extend for the Klein four group $\cyclic{2}^2$ in characteristic $2$.  But they do not hold for any other noncyclic elementary abelian group $\cyclic{p}^r$, as the lower bound in Theorem~\ref{THEOREMasymptoticDiscBound} breaks the constant $a(G)$. This follows from the next proposition.

\begin{prop} We have $a_p(\cyclic{p}^r)= a(\cyclic{p}^r)$ for $\cyclic{p}^r=\cyclic{p},\cyclic{2}^2$, and $ a_p(\cyclic{p}^r) > a(\cyclic{p}^r) $ otherwise. 
\end{prop}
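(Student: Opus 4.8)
The plan is to reduce both claimed identities and the strict inequality to the sign of a single integer expression and then run a short finite case check. First I would make $a(\cyclic{p}^r)$ explicit from Wright's formula recalled above: since $|\cyclic{p}^r| = p^r$ has smallest prime divisor $\ell = p$, one gets
$$ a(\cyclic{p}^r) = \frac{p}{p^r(p-1)}, $$
whereas by definition $a_p(\cyclic{p}^r) = \frac{1+(p-1)r}{p(p^r-1)}$. All denominators occurring here are positive, so the sign of $a_p(\cyclic{p}^r) - a(\cyclic{p}^r)$ coincides with the sign of
$$ f(p,r) := (1+(p-1)r)(p-1)\,p^r - p^2\,(p^r-1). $$
Thus the proposition is equivalent to the statement that $f(p,r) = 0$ when $r = 1$ or $(p,r) = (2,2)$, and $f(p,r) > 0$ for every prime $p$ and integer $r \geq 1$ outside these two cases.

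Next I would rewrite $f$ in the form
$$ f(p,r) = p^2 - p^r\,A(p,r), \qquad A(p,r) := p^2 - p + 1 - r(p-1)^2, $$
and observe that $A(p,r)$ is strictly decreasing in $r$, since the coefficient $-(p-1)^2$ is negative. Evaluating at the two exceptional parameter pairs gives $A(p,1) = p$, hence $f(p,1) = p^2 - p\cdot p = 0$, and $A(2,2) = 1$, hence $f(2,2) = 4 - 4 = 0$; this settles the equality cases. For the remaining parameters I would split on $p$. If $p \geq 3$ then $A(p,2) = -(p^2 - 3p + 1) < 0$, and monotonicity gives $A(p,r) \leq A(p,2) < 0$ for every $r \geq 2$, so $f(p,r) = p^2 - p^r A(p,r) > 0$. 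If $p = 2$ then $A(2,r) = 3 - r$, which is $\leq 0$ for $r \geq 3$; for $r \geq 4$ this already forces $f(2,r) > 0$, while for $r = 3$ one has $A(2,3) = 0$, so $f(2,3) = 4 > 0$ as well. Assembling the cases yields the proposition.

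There is no genuine obstacle here; the whole argument is elementary arithmetic comparing integers. The only spot deserving a moment of care is the boundary of the case split: at $(p,r) = (2,3)$ the quantity $A$ vanishes rather than being strictly negative, so the reasoning must be arranged so that the surviving additive term $p^2$ still delivers the strict inequality $f > 0$ there. Phrasing $f$ as $p^2 - p^r A(p,r)$ and exploiting that $A(p,r)$ decreases in $r$ keeps this bookkeeping minimal, and nothing beyond integer comparisons is required.
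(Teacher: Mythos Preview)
Your proof is correct and follows essentially the same route as the paper: both compute $a_p(\cyclic{p}^r)-a(\cyclic{p}^r)$ over a common denominator and study the sign of the resulting integer numerator, and indeed your quantity $-A(p,r)=(r-1)(p-1)^2-p$ is exactly the bracketed factor in the paper's numerator $((r-1)(p-1)^2-p)\,p^{r-1}+p$. The only difference is that you spell out the case analysis for the sign, whereas the paper leaves the inequality $\geq 0$ (with the equality cases) to the reader.
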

\begin{proof} Write $a_p(\cyclic{p}^r)-a(\cyclic{p}^r)$ as a fraction by extending $a_p(\cyclic{p}^r)$ with the denominator of $a(\cyclic{p}^r)$ and vice versa. Then this fraction has the numerator
$$ \big( (r-1)\, (p-1)^2 - p \big) \cdot p^{r-1} + p \geq 0,  $$
which leads to the stated proposition. 
\end{proof}

This will have consequences on possible extensions of the Malle conjecture for global function fields. 

\begin{EMC}[Extended Malle Conjecture] Let $F$ be global field of characteristic $p\geq 0$ and $G$ be a finite group. Then we conjecture, that there are positive constants $a(F,G)$, $b(F,G)$, $c(F,G)$, such that
$$ Z(F,G;X) \sim c(F,G) \, X^{a(F,G)} \, \log(X)^{b(F,G)-1}. $$ 
What are the formulas of these constants? 
\end{EMC}

Gunter Malle \cite{Malle2002, Malle2004} proposed formulas for $a(F,G)=a(G)$ and $b(F,G)$ in the number field case, which extend Wright's formulas for abelian groups and other known results. While his conjecture for $b(F,G)$ needs some adjustments \cite{JK2005, Tuerk2008}, the formula for $a(F,G)=a(G)$, which remarkably does not depend on $F$, seems to be consistent for both number and function fields~$F$ with characteristic~$p$ coprime to the group order~$|G|$. For function fields, this was shown by Jordan Ellenberg and Akshay Venkatesh \cite{Ellenberg2005} heuristically. With my results at hand, it is plausible that a general formula for $a(F,G)$ including the case $p \mid |G|$ is not independent of $F$ and should at least depend on its characteristic~$p$, a formula like $a(F,G)=a_p(G)$ for example. 
The main specialities of the case $p\mid |G|$ are, that every prime $\mf{p}$ of $F$ can wildly ramify in some $G$-extension $E/F$, and it is possible to have infinitely many local $G$-extensions $E/\local{F}{p}$ over any localisation~$\local{F}{p}$. Both effects do not occur in the case $p\nmid |G|$, including number fields.

\section{Notations and preliminaries} Let $F$ denote a global function field with constant field $\ffield{q}$ of cardinality $q$, algebraically closed in $F$. For a prime $\mf{p}$ of $F$, let $\ffield{\mf{p}}\geq \ffield{q}$ denote the residue class field, and $\N{p} = |\ffield{\mf{p}}|$ its norm. The valuation of $\mf{p}$ is denoted by $\val{p}$.  Let $g_F$ be the genus, $h_F$ the class number, and $\Cl$ the class group of $F$. In particular, we have $\Cl\iso\ZZ \times \Cl[h_F]$. For a module $\mf{m}$, an effective divisor of $F$ that is, let $F^{\mf{m}}$ be the multiplicative group of functions $x\in F^{\times}$ with divisor coprime to $\mf{m}$, and let $F_{\mf{m}}\leq F^{\mf{m}}$ be the ray mod $\mf{m}$ of functions $x\in F^{\mf{m}}$ with $x\in\open{p}{\ord{p}{\mf{m}}}$ for all $\mf{p}\mid\mf{m}$.  
The ray class group $\Cl_{\mf{m}}$  mod $\mf{m}$ is defined by the exact sequence of abelian groups
\begin{equation}\label{RayClassGroupSequence}
  1 \rightarrow \ffield{q}^{\times} \rightarrow F^{\mf{m}}/F_{\mf{m}} \rightarrow \Cl_{\mf{m}} \rightarrow \Cl \rightarrow 1. 
\end{equation}
By class field theory, there is a bijective map of finite abelian extensions $E/F$ with module $\mf{m}$ and subgroups $U\leq\Cl_{\mf{m}}$ of finite index, such that $\Gal{E}{F} \iso \Cl_{\mf{m}}/U$.\footnote{E.g. see page 139 and Theorem 9.23, page 140 in \cite{RosenNTFF} for the sequence (\ref{RayClassGroupSequence}) and the Artin reciprocity law respectively.} Let $U_{\mf{m}}$ be the following finite $p$-group\footnote{E.g. see section 9.2 in \cite{Hess2003} for the isomorphisms.} 
\begin{equation}\label{EinseinheitenGroup} 
U_{\mf{m}} = (F^{\mf{m}}/F_{\mf{m}}) / (F^{\mf{m}}/F_{\mf{m}})^p \iso \prod\limits_{\mf{p}^m\mid\mid\mf{m}} U_{\mf{p}^m} \iso \prod\limits_{\mf{p}^m\mid\mid\mf{m}} \Open{p}{}/\langle\Open{p}{m},\Open{p}{}^p\rangle,
\end{equation}
where the product is to read as follows. The relation $\mf{p}^m\mid\mid\mf{m}$ stands for $\ord{p}{\mf{m}}=m > 0$. For an expression $f_{\mf{p}}(m)$, the product $\prod\nolimits_{\mf{p}^m\mid\mid\mf{m}} f_{\mf{p}}(m)$ stands for $\prod\nolimits_{\mf{p}\mid\mf{m}} f_{\mf{p}}(\val{p}(\mf{m}))$. For example, let $f_{\mf{p}}(m)=\mf{p}^{m-1}$ and we have $\prod\nolimits_{\mf{p}^m\mid\mid\mf{m}} \mf{p}^{m-1} = \prod\nolimits_{\mf{p}\mid\mf{m}} \mf{p}^{\val{p}(\mf{m})-1}$, which is still an effective divisor. The same definition holds for sums. For example, the divisor above has degree $\sum\nolimits_{\mf{p}^m\mid\mid\mf{m}} (m-1)\deg\mf{p} = \sum\nolimits_{\mf{p}\mid\mf{m}} (\ord{p}{\mf{m}}-1)\deg \mf{p} \geq 0$. For the trivial module $\mf{m}=\mf{1}$, these products and sums are particularly defined as empty ones. Further, let $S = \{ x\in F^{\times} : (x)=\mf{a}^p \textup{ for some divisor } \mf{a} \}/F^p$ be the $p$-Selmer group, and let $S_{\mf{m}}=\{ [x]\in S : x\in F_{\mf{m}}\}$ be the Selmer ray group mod $\mf{m}$ of $F$.

\section{Artin-Schreier extensions with given module}

In this section, we shall compute the order of the ray class group $\Cl_{\mf{m}}/\Cl_{\mf{m}}^p$. It nearly behaves multiplicatively over the prime decomposition of $\mf{m}$, if there were not the Selmer ray group $S_{\mf{m}}$, whose order is not multiplicative. But as we will see, $S_{\mf{m}}$ is trivial for almost all modules $\mf{m}$ of interest. Proposition~\ref{prop1} is the function field analogue of Proposition 2.12 in \cite{CohenDiazOlivierDDCEPD}, and its proof is essentially the same. 

\begin{prop}\label{prop1} We have the exact sequence of finite abelian $p$-groups
$$ 1 \rightarrow S_{\mf{m}} \rightarrow \Cl[p] \rightarrow U_{\mf{m}} \rightarrow \Cl_{\mf{m}}/\Cl_{\mf{m}}^p \rightarrow \Cl/\Cl^p \rightarrow 1. $$
\end{prop}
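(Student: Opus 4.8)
The plan is to construct the five-term exact sequence by splicing together two shorter exact sequences coming from the snake lemma applied to the reduction-mod-$p$ functor on the defining sequence~(\ref{RayClassGroupSequence}) of the ray class group, and then to identify the kernel and cokernel terms explicitly with $S_{\mf m}$ and $U_{\mf m}$ respectively.

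First I would recall that tensoring a short exact sequence of abelian groups with $\ZZ/p\ZZ$ (equivalently, applying $-\otimes\ZZ/p\ZZ$ and $\mathrm{Tor}(-,\ZZ/p\ZZ)=(-)[p]$) yields a six-term exact sequence. Applying this to $1\to \ffield q^{\times}\to F^{\mf m}/F_{\mf m}\to \Cl_{\mf m}\to \Cl\to 1$ — which I would first break into the two short exact sequences $1\to \ffield q^{\times}\to F^{\mf m}/F_{\mf m}\to A\to 1$ and $1\to A\to \Cl_{\mf m}\to \Cl\to 1$, where $A$ is the image of $F^{\mf m}/F_{\mf m}$ in $\Cl_{\mf m}$ — gives, since $\ffield q^{\times}$ has order prime to $p$ (because $q$ is a power of $p$), that $A/A^p \iso (F^{\mf m}/F_{\mf m})/(F^{\mf m}/F_{\mf m})^p = U_{\mf m}$ and $A[p]\iso (F^{\mf m}/F_{\mf m})[p]$. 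For the second short exact sequence the snake lemma produces
$$ 0\to A[p]\to \Cl_{\mf m}[p]\to \Cl[p]\to A/A^p\to \Cl_{\mf m}/\Cl_{\mf m}^p\to \Cl/\Cl^p\to 0. $$
Substituting $A/A^p=U_{\mf m}$ and truncating, it remains to compute the composite $\Cl_{\mf m}[p]\to\Cl[p]$ together with its kernel $A[p]=(F^{\mf m}/F_{\mf m})[p]$; I claim the latter group is exactly $S_{\mf m}$, so that the four right-hand terms give the asserted five-term sequence after one checks that the map $S_{\mf m}\to \Cl[p]$ in the statement is the one just produced (i.e.\ that $\Cl_{\mf m}[p]\to\Cl[p]$ is surjective with kernel $S_{\mf m}$, or equivalently that the connecting map $\Cl[p]\to U_{\mf m}$ has the right kernel).

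The identification of $A[p]$ with the Selmer ray group is where the real content sits. An element of $A=(F^{\mf m}/F_{\mf m})\big/\ffield q^{\times}$ killed by $p$ is represented by $x\in F^{\mf m}$ with $x^p\in \ffield q^{\times}F_{\mf m}$; modulo $p$-th powers this says $x^p$ lies in the ray, hence (after adjusting by a constant, using again $p\nmid q-1$) that the principal divisor $(x)$ is a $p$-th power of a divisor and $x\in F_{\mf m}$ up to $F^p$ — which is precisely the definition of $S_{\mf m}=\{[x]\in S: x\in F_{\mf m}\}$. Conversely any such $[x]\in S_{\mf m}$ gives a class in $A[p]$ mapping into $\Cl[p]$; the image in $\Cl[p]$ is the class of the divisor $\mf a$ with $(x)=\mf a^p$, and the exactness at $\Cl[p]$ amounts to the classical fact that a $p$-torsion divisor class comes from a global function (namely becomes principal after raising to the $p$th power) exactly when, and the resulting function can be taken in the ray, which is again the class field theory input already cited. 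I would also note that surjectivity onto $\Cl/\Cl^p$ on the right is immediate from surjectivity of $\Cl_{\mf m}\to\Cl$ in~(\ref{RayClassGroupSequence}).

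The main obstacle I anticipate is bookkeeping the constant field: one must repeatedly invoke that $|\ffield q^{\times}|=q-1$ is coprime to $p$ so that $\ffield q^{\times}$ contributes nothing to either $[p]$ or $\big/(-)^p$, and be careful that "up to a constant" adjustments are harmless — this is exactly the point at which the function field argument differs cosmetically from the number field Proposition 2.12 of \cite{CohenDiazOlivierDDCEPD}, though the structure is identical. A secondary care point is checking that the map labelled $U_{\mf m}\to\Cl_{\mf m}/\Cl_{\mf m}^p$ in the statement really is induced by the natural map $F^{\mf m}/F_{\mf m}\to\Cl_{\mf m}$ and that its composition with $\Cl[p]\to U_{\mf m}$ vanishes, which is just functoriality of the snake lemma construction. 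Everything else is diagram chasing.
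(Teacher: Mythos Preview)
Your snake-lemma strategy is sound and close in spirit to the paper's argument, but there is a genuine misidentification. You claim that $A[p]=(F^{\mf m}/F_{\mf m})[p]$ equals $S_{\mf m}$, and you try to justify this by saying that $x\in F^{\mf m}$ with $x^p\in \ffield{q}^{\times}F_{\mf m}$ forces $(x)$ to be the $p$-th power of a divisor. That implication is false: the condition $x^p\in F_{\mf m}$ is a congruence at the primes dividing $\mf m$ and says nothing whatsoever about the divisor of $x$ at the other primes. In fact $A[p]$ is the $p$-torsion of a product of local unit groups, a purely local object whose order is in general unrelated to $|S_{\mf m}|\leq |\Cl[p]|$. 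Likewise your parenthetical claim that $\Cl_{\mf m}[p]\to\Cl[p]$ is \emph{surjective} with kernel $S_{\mf m}$ cannot be right: its kernel is $A[p]$ by the very six-term sequence you wrote down, and surjectivity would force the connecting map $\Cl[p]\to U_{\mf m}$ to be zero, hence $S_{\mf m}=\Cl[p]$ always, which is absurd.

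What you actually need---and what you correctly name in your ``or equivalently'' clause---is that the \emph{image} of $\Cl_{\mf m}[p]\to\Cl[p]$, i.e.\ the kernel of the connecting map $\Cl[p]\to U_{\mf m}$, equals $S_{\mf m}$ under the identification $\Cl[p]\iso S$. This is easy to check directly: a class $[\mf a]\in\Cl[p]$ lifts to $\Cl_{\mf m}[p]$ if and only if one can choose $\mf a$ coprime to $\mf m$ with $\mf a^p=(x)$ for some $x\in F_{\mf m}$, which is exactly the condition $[x]\in S_{\mf m}$. The paper does essentially this computation but from the other side: it uses only right-exactness of $-\otimes\cyclic{p}$ applied to~(\ref{RayClassGroupSequence}), describes the kernel $U\leq U_{\mf m}$ of $U_{\mf m}\to\Cl_{\mf m}/\Cl_{\mf m}^p$ explicitly, and then exhibits (via the approximation theorem) a surjection $S\to U$ with kernel $S_{\mf m}$. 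Once you replace your $A[p]$ paragraph with the correct image computation, your argument goes through and is equivalent to the paper's.
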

\begin{proof} The sequence (\ref{RayClassGroupSequence}) remains right exact by tensoring with $\cyclic{p}$, and we obtain
$$ U_{\mf{m}} \rightarrow \Cl_{\mf{m}}/\Cl_{\mf{m}}^p \rightarrow \Cl/\Cl^p \rightarrow 1. $$ 
The kernel $U\leq U_{\mf{m}}$ of the first map is generated by functions $x\in F^{\mf{m}}$ with divisor $(x)=(y)\, \mf{a}^p$ and $y\in F_{\mf{m}}$. Therefore, any class $[x] = [xy^{-1}]$ in $U$ is generated by a Selmer element $[z]\in S$ with $(z) = \mf{a}^p$. Conversely,  every Selmer class $[x]\in S$ can be mapped in $U$ by the approximation theorem,\footnote{E.g. see Theorem 1.3.1, page 12 in \cite{StiALF}.} and this map has kernel $S_{\mf{m}}$. Finally, we use the following isomorphism of the $p$-torsion group $\Cl[p]$ with the Selmer group $S$. For a given divisor class $[\mf{a}]\in \Cl[p]$, there is a function $z\in F^{\times}$ with divisor $(z) = \mf{a}^p$, and vice versa. Then the map $[\mf{a}] \mapsto [z]$ is well defined and provides an isomorphism $\Cl[p]\iso S$. 
\end{proof}

\begin{lemma}\label{prop2} For any integer $m\geq 1$, let $r_m=m-1-\lfloor (m-1)/p \rfloor$, and $r_0=0$. Let $\mf{m}$ be a module. Then we have  
$$ \grindex{\Cl_{\mf{m}}}{\Cl_{\mf{m}}^p} = p \, |S_{\mf{m}}| |U_{\mf{m}}| = p \, |S_{\mf{m}}|  \prod\limits_{\mf{p}^m\mid\mid\mf{m}}
\N{p}^{r_m}. $$
\end{lemma}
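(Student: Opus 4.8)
The plan is to split the statement into its two equalities. The first, $\grindex{\Cl_{\mf{m}}}{\Cl_{\mf{m}}^p}=p\,|S_{\mf{m}}|\,|U_{\mf{m}}|$, follows formally from Proposition~\ref{prop1} together with the structure of $\Cl$; the second, $|U_{\mf{m}}|=\prod_{\mf{p}^m\mid\mid\mf{m}}\N{p}^{r_m}$, is the part carrying arithmetic content and reduces to a purely local index computation.

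For the first equality, I would break the five-term exact sequence of Proposition~\ref{prop1} into short exact pieces and take cardinalities; since all groups involved are finite, this gives
$$ \grindex{\Cl_{\mf{m}}}{\Cl_{\mf{m}}^p} = |S_{\mf{m}}|\,|U_{\mf{m}}|\cdot\frac{|\Cl/\Cl^p|}{|\Cl[p]|}, $$
and it remains to identify the last fraction with $p$. Writing $\Cl\iso\ZZ\times\Cl[h_F]$ and putting $T=\Cl[h_F]$ for the finite torsion subgroup, one has $\Cl/\Cl^p\iso(\ZZ/p\ZZ)\times(T/pT)$ and $\Cl[p]=T[p]$; since $|T/pT|=|T[p]|$ for every finite abelian group, the quotient equals $p$, contributed by the free $\ZZ$-summand.

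For the second equality, by the isomorphism~(\ref{EinseinheitenGroup}) it suffices to prove $|U_{\mf{p}^m}|=\N{p}^{r_m}$ for one local factor $U_{\mf{p}^m}\iso\Open{p}{}/\langle\Open{p}{m},\Open{p}{}^p\rangle$, where $\Open{p}{i}=1+\mf{p}^i$ is the $i$-th higher unit group of the completion at $\mf{p}$. As the ambient group is abelian, $\langle\Open{p}{m},\Open{p}{}^p\rangle=\Open{p}{}^p\,\Open{p}{m}$, so the tower law for (finite) indices gives $|U_{\mf{p}^m}|=\N{p}^{m-1}/\grindex{\Open{p}{}^p\,\Open{p}{m}}{\Open{p}{m}}$, using the standard $\grindex{\Open{p}{}}{\Open{p}{m}}=\N{p}^{m-1}$. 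To evaluate the remaining index I would use characteristic $p$: the $p$-th power map is the homomorphism $1+x\mapsto 1+x^p$, injective because the completion is a field, so it restricts to isomorphisms $\Open{p}{n}\xrightarrow{\sim}\Open{p}{n}^p$ for all $n$. Since $1+x^p\in\Open{p}{m}$ exactly when $\val{p}(x)\geq\lceil m/p\rceil$, one gets $\Open{p}{}^p\cap\Open{p}{m}=\Open{p}{\lceil m/p\rceil}^p$, hence by the second isomorphism theorem and this injectivity
$$ \grindex{\Open{p}{}^p\,\Open{p}{m}}{\Open{p}{m}} = \grindex{\Open{p}{}^p}{\Open{p}{\lceil m/p\rceil}^p} = \grindex{\Open{p}{}}{\Open{p}{\lceil m/p\rceil}} = \N{p}^{\lceil m/p\rceil-1} = \N{p}^{\lfloor(m-1)/p\rfloor}. $$
Subtracting exponents yields $|U_{\mf{p}^m}|=\N{p}^{(m-1)-\lfloor(m-1)/p\rfloor}=\N{p}^{r_m}$, and the product over $\mf{p}^m\mid\mid\mf{m}$ completes the proof.

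None of the steps is deep, and the only place requiring real care is this local computation: determining $\Open{p}{}^p\cap\Open{p}{m}$ precisely and handling the ceiling/floor bookkeeping via $\lceil m/p\rceil=\lfloor(m-1)/p\rfloor+1$. A reassuring sanity check is that $r_m$ is exactly the number of integers $1\leq i\leq m-1$ with $p\nmid i$, which matches the heuristic that passing to the quotient by $p$-th powers removes precisely the "$t^i$-coordinates" of $\Open{p}{}/\Open{p}{m}$ with $p\mid i$.
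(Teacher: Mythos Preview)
Your proof is correct. The first equality is handled identically to the paper: both derive $\grindex{\Cl_{\mf{m}}}{\Cl_{\mf{m}}^p} = |S_{\mf{m}}|\,|U_{\mf{m}}|\,\grindex{\Cl}{\Cl^p}/|\Cl[p]|$ from the exact sequence of Proposition~\ref{prop1} and then reduce the last fraction to $p$; you are just a bit more explicit about the role of the free $\ZZ$-summand of $\Cl$.

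The difference lies in the local computation of $|U_{\mf{p}^m}|$. The paper simply invokes the Einseinheitensatz (citing Fesenko--Vostokov) to assert that $U_{\mf{p}^m}$ has $p$-rank $\fdindex{\ffield{\mf{p}}}{\ffield{p}}\cdot r_m$, whence $|U_{\mf{p}^m}|=\N{p}^{r_m}$. You instead give a self-contained argument exploiting that in characteristic $p$ the $p$-th power map on principal units is the injective homomorphism $1+x\mapsto 1+x^p$, which lets you identify $\Open{p}{}^p\cap\Open{p}{m}=\Open{p}{\lceil m/p\rceil}^p$ and read off the index directly. Your route is more elementary and avoids an external reference, at the cost of the small ceiling/floor bookkeeping you flag; the paper's route is shorter but opaque unless one knows the cited structure theorem. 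Both land on the same exponent $r_m = m-1-\lfloor(m-1)/p\rfloor$, and your observation that $r_m$ counts the integers $1\le i\le m-1$ not divisible by $p$ is exactly the content of the Einseinheitensatz in this setting.
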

\begin{proof} With the exact sequence of Proposition~\ref{prop1} and the isomorphism in (\ref{EinseinheitenGroup}), we obtain 
$$ \grindex{\Cl_{\mf{m}}}{\Cl_{\mf{m}}^p} = \frac{|S_{\mf{m}}| |U_{\mf{m}}|  \grindex{\Cl}{\Cl^p}}{|\Cl[p]|} = p\, |S_{\mf{m}}|\, |U_{\mf{m}}| = p \, |S_{\mf{m}}| \prod\limits_{\mf{p}^m||\mf{m}} |U_{\mf{p}^m}|. $$
By the well-known Eins\-einheiten\-satz\footnote{E.g. see Proposition 6.2, page 18 in \cite{Fesenko2002}.}, the $p$-rank of $U_{\mf{p}^m}$ is $\fdindex{\ffield{\mf{p}}\negthinspace}{\negthinspace\ffield{p}} \, r_m$, and we obtain the proposed index formula.  
\end{proof}

\begin{cor}\label{prop3} Let $\mf{m}=\mf{np}$ be a module with prime divisor $\mf{p}$ coprime to $\mf{n}$. Then $|S_{\mf{m}}| = |S_{\mf{n}}|$ holds.  
\end{cor}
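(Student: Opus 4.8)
The plan is to show $S_{\mf m}=S_{\mf n}$ as subgroups of the $p$-Selmer group, which of course forces $|S_{\mf m}|=|S_{\mf n}|$. One inclusion is free: since $\mf n\mid\mf m=\mf{np}$ and $\mf p\nmid\mf n$, every prime $\mf q\mid\mf n$ is distinct from $\mf p$ and hence satisfies $\ord q{\mf m}=\ord q{\mf n}$, so $F_{\mf m}\subseteq F_{\mf n}$ and therefore $S_{\mf m}\subseteq S_{\mf n}$. It thus remains to prove that every Selmer class with a representative in $F_{\mf n}$ already has one in $F_{\mf m}$.

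The key point is that $\ord p{\mf m}=1$, so the only new constraint $\mf m$ imposes at $\mf p$ is that a representative be a unit at $\mf p$ with trivial residue in $\ffield{\mf p}^{\times}$. Given $[x]\in S_{\mf n}$ with $(x)=\mf a^{p}$ and $x\in F_{\mf n}$, I would modify $x$ by a $p$-th power, looking for $z\in F^{\times}$ with $y:=xz^{p}\in F_{\mf m}$; since $[y]=[x]$ in the Selmer group, this puts $[x]$ in $S_{\mf m}$. Writing $k=\ord p{\mf a}$ and fixing a uniformizer $\pi$ at $\mf p$, one needs $\ord p z=-k$, which already forces $\ord p y=0$, together with a prescribed value for the residue of $z\pi^{k}$ that makes $y\equiv 1\pmod{\mf p}$. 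This residue is uniquely determined and attainable precisely because $\ffield{\mf p}^{\times}$ has order $\N p-1$ coprime to $p$, so the $p$-th power map is a bijection of $\ffield{\mf p}^{\times}$.

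To leave the conditions at the primes dividing $\mf n$ undisturbed I would additionally require $z\equiv 1\pmod{\mf q^{\ord q{\mf n}}}$ for each $\mf q\mid\mf n$; then $z^{p}\in\open q{\ord q{\mf n}}$, hence $y=xz^{p}\in\open q{\ord q{\mf n}}=\open q{\ord q{\mf m}}$. These are finitely many open conditions on $z$ at the relevant places, so the approximation theorem produces a global $z\in F^{\times}$ meeting all of them simultaneously. Finally I would check that $(y)=p\,(\mf a+(z))$ is coprime to $\mf m$: its valuation at $\mf p$ is $0$ by construction, and at $\mf q\mid\mf n$ the membership $x\in F_{\mf n}$ forces $\ord q x=0$, hence $\ord q{\mf a}=0$, while $\ord q z=0$ as well. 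Thus $y\in F_{\mf m}$, giving $S_{\mf n}\subseteq S_{\mf m}$ and completing the argument.

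The one genuinely essential ingredient is $p\nmid\N p-1$: it is what allows a single residue condition at $\mf p$ to be absorbed into a $p$-th power, and it is why the statement is restricted to modules $\mf{np}$ with $\mf p\nmid\mf n$ — for $\mf m=\mf n\mf p^{k}$ with $k\ge 2$ the residue condition at $\mf p$ is replaced by a congruence modulo $\mf p^{k}$, which a $p$-th power cannot in general repair, and $S_{\mf m}$ genuinely shrinks. Everything else — turning "exact valuation and prescribed residue at $\mf p$, prescribed congruences at the $\mf q\mid\mf n$" into an actual element of $F^{\times}$ — is routine bookkeeping with the approximation theorem.
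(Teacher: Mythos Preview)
Your argument is correct and in fact proves the stronger statement $S_{\mf m}=S_{\mf n}$ as subgroups of $S$, not just equality of cardinalities. The key step is sound: since $F$ has characteristic $p$, the residue field $\ffield{\mf p}$ has order a power of $p$, so $p\nmid\N p-1$ and the $p$-th power map is a bijection on $\ffield{\mf p}^{\times}$; the approximation theorem then furnishes the required $z$.

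The paper takes a different, more top-down route. Rather than manipulating representatives, it observes that any ramified $p$-extension of the local field $\local{F}{p}$ is wildly ramified, hence cannot have local conductor exponent equal to $1$. Consequently no abelian $p$-extension of $F$ can have conductor divisible by $\mf p$ exactly once, which forces $\Cl_{\mf m}/\Cl_{\mf m}^p\iso\Cl_{\mf n}/\Cl_{\mf n}^p$. Feeding this into the index formula of Lemma~\ref{prop2}, together with $|U_{\mf p}|=\N p^{r_1}=1$, yields $|S_{\mf m}|=|S_{\mf n}|$. Your approach is more elementary---it uses only the approximation theorem and the structure of $\ffield{\mf p}^{\times}$, avoiding class field theory entirely---and it explains at the level of elements why the extra prime $\mf p$ imposes no new Selmer constraint. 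The paper's approach is shorter given the machinery already in place and makes transparent the class-field-theoretic reason (no tamely ramified $p$-extensions), which is conceptually aligned with how the result is used downstream. Both arguments ultimately hinge on the same arithmetic fact, expressed either as ``$p$-th powering is bijective on $\ffield{\mf p}^{\times}$'' or equivalently as ``$|U_{\mf p}|=1$''.
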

\begin{proof} Any ramified $p$-extension of the local field $\local{F}{p}$ is wildly ramified, whence its local conductor cannot be squarefree. This implies that an abelian $p$-extension of $F$ with module $\mf{m}$ cannot be ramified in~$\mf{p}$, and abelian $p$-extensions of $F$ with conductor $\mf{m}$ are nonexistent. Hence, the ray class factor groups $\Cl_{\mf{m}}/\Cl_{\mf{m}}^p$ and $\Cl_{\mf{n}}/\Cl_{\mf{n}}^p $ are isomorphic, and the proposed coincidence of cardinality follows from Lemma~\ref{prop2} and $|U_{\mf{p}}|=1$. 
\end{proof}

The order of Selmer ray groups is hard to compute in general. For modules with a large square divisor however, we can utilise the embedding of the Selmer group in the group of regular logarithmic differentials of $F$.

\begin{lemma}\label{prop4} Let $\mf{m}$ be a module with the condition 
$$ \sum\limits_{\mf{p}^m\mid\mid\mf{m}} (m-1) \, \deg\mf{p} > 2g_F - 2. $$
Then the Selmer ray group $S_{\mf{m}}$ is trivial. 
\end{lemma}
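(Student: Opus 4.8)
The plan is to realise $S_{\mf{m}}$ as a subgroup of a space of holomorphic differentials of $F$ and then to annihilate that space by a Riemann--Roch degree count. Let $\Omega_F$ denote the module of K\"ahler differentials of $F$ over $\ffield{q}$ and consider the logarithmic derivative $\phi\colon F^{\times}\to\Omega_F$, $x\mapsto dx/x$. Since $\ffield{q}$ is perfect one has $\ker(d\colon F\to\Omega_F)=F^p$; and since $d(y^p)=0$ for every $y\in F^{\times}$, the homomorphism $\phi$ factors through $F^{\times}/F^p$ and is injective on it. In particular $\phi$ restricts to an injective homomorphism $S\hookrightarrow\Omega_F$.

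First I would show that $\phi(S)$ consists of everywhere regular differentials. By the classical description of the divisor of a logarithmic differential, at a prime $\mf{q}$ of $F$ with uniformizer $\pi$ one writes $x=u\pi^{n}$ with $u$ a unit and $n=\val{q}(x)$, so that $dx/x=du/u+n\,d\pi/\pi$; as $du/u$ is regular at $\mf{q}$ and $d\pi/\pi$ has a simple pole there, $dx/x$ has at worst a simple pole at $\mf{q}$, and is regular at $\mf{q}$ whenever $p\mid n$ (cf.\ \cite{StiALF}). Now if $[x]\in S$ then $(x)=\mf{a}^p$, hence $p\mid\val{q}(x)$ for every prime $\mf{q}$, so $\phi([x])$ lies in $H^0(F,\Omega_F)$. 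This is the embedding of the Selmer group into the regular logarithmic differentials announced before the lemma.

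Next I would bring in the ray condition. If $[x]\in S_{\mf{m}}$, then $x\in F_{\mf{m}}$, so for every prime $\mf{p}\mid\mf{m}$ with $m=\ord{p}{\mf{m}}$ one may write $x=1+y$ with $\val{p}(y)\geq m$; then $dx/x=dy/(1+y)$ with $1+y$ a unit and, by the same local computation, $\val{p}(dy)\geq m-1$, whence $\val{p}(\phi([x]))\geq m-1$. Put $\mf{n}=\prod_{\mf{p}^m\mid\mid\mf{m}}\mf{p}^{m-1}$, an effective divisor of degree $\deg\mf{n}=\sum_{\mf{p}^m\mid\mid\mf{m}}(m-1)\deg\mf{p}$. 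Combining the last two paragraphs, every $\omega\in\phi(S_{\mf{m}})$ is a holomorphic differential with $(\omega)\geq\mf{n}$, so $\phi(S_{\mf{m}})$ embeds into a space of dimension $\ell(W-\mf{n})$, where $W$ is a canonical divisor of $F$ of degree $\deg W=2g_F-2$. The hypothesis $\deg\mf{n}>2g_F-2$ gives $\deg(W-\mf{n})<0$, hence $\ell(W-\mf{n})=0$ by the Riemann--Roch theorem. Therefore $\phi(S_{\mf{m}})=0$, and since $\phi$ is injective, $S_{\mf{m}}$ is trivial.

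The main obstacle is the local differential computation of the second step, namely that a logarithmic differential has at worst simple poles and is regular at every prime where the valuation of its argument is divisible by $p$ (equivalently a statement about the Cartier operator and wild ramification). This is the one place where the characteristic-$p$ geometry really enters; the remaining ingredients — injectivity of the logarithmic derivative, the translation of membership in $F_{\mf{m}}$ into vanishing orders of $dx/x$, and the Riemann--Roch inequality — are routine, so I would quote the local fact from the literature rather than reprove it.
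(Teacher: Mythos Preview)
Your proposal is correct and follows essentially the same route as the paper: inject $S_{\mf{m}}$ into $\Omega_F$ via the logarithmic derivative, use the Selmer condition $p\mid\val{q}(x)$ to see that $dx/x$ is everywhere regular, use the ray condition $x\in 1+\mf{p}^m$ to get $\val{p}(dx/x)\geq m-1$, and finish with the degree of a canonical divisor. The only cosmetic differences are that the paper carries out the local valuation computation explicitly via a power-series expansion (rather than quoting it) and concludes by the contradiction $2g_F-2=\deg(\omega)\geq\sum(m-1)\deg\mf{p}$ instead of invoking $\ell(W-\mf{n})=0$; these are equivalent formulations of the same argument.
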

\begin{proof} Let $\Omega_F$ be the group of differentials and
$$ \psi : S_{\mf{m}} \rightarrow \Omega_F, \quad [x] \mapsto \frac{dx}{x}. $$
By the product rule, this map is well defined and homomorphic via
$$ \frac{d(xy)}{xy} = \frac{ydx + xdy}{xy} = \frac{dx}{x} + \frac{dy}{y} \qquad\textup{and}\qquad \frac{dz^p}{z^p} = p\cdot
\frac{dz}{z} = 0 $$
for $x,y,z\in F^{\times}$. The differential $dx$ is zero if and only if $x=z^p$ is a nonseparating element of
$F$.\footnote{E.g. see Proposition 3.10.2 (d), page 144, and Proposition 4.1.8 (c), page 160 in \cite{StiALF}.}  Therefore, $\psi$ is injective.
We now suppose $x\in F_{\mf{m}}$ to generate a nontrivial Selmer class in $S_{\mf{m}}$, and denote its logarithmic differential with $\omega = dx/x$. For any prime $\mf{p}$ of $F$ with uniformizing element $t$, there is an integer $i = -\val{p}(x)/p$, such that $u=xt^{ip}$  is a unit modulo $\mf{p}$. We hence obtain $\omega = du/u$ by the product rule, and $\omega = (du/dt)(dt/u)$ by the chain rule. This yields 
$$ \val{p}(\omega) = \val{p}(du/dt) - \val{p}(u) + \val{p}(dt) = \val{p}(du/dt).   $$
Observe, that $\val{p}(dt)=0$ follows from $\val{p}(t)=1$.\footnote{E.g. see Theorem 4.3.2 (e), page 171 in \cite{StiALF}.}
The series expansion of $u = \sum a_nt^n$ with respect to $t$ contains at least one nontrivial term $a_nt^n$ with $p\nmid n$, as $u$ is no $p$-th power. If $n$ is chosen to be minimal with this property, we have $n\geq 1$ and $du/dt \in na_nt^{n-1}\Open{p}{}$, whence $\omega$ has positive valuation $\val{p}(\omega)=n-1$. As we have chosen $\mf{p}$ to be arbitrary, $\omega$ is hence regular, and has a positive divisor of degree $2g_F-2$.\footnote{E.g. see Corollary 1.5.16, page 31 in \cite{StiALF}.} This already leads to a contradiction in case of $g_F=0$, of course, but more generally, we yield the inequality
$$ 2g_F-2 = \deg(\omega) \geq \sum\limits_{\mf{p}\mid\mf{m}} \ord{p}{\omega} \, \deg\mf{p}. $$ 
Since $x$ is chosen from the ray $F_{\mf{m}}$, we have $x\in\Open{p}{m}$ for $\mf{p}^m\mid\mid\mf{m}$. Then the integer $n$, obtained by the procedure above, fulfils $n\geq m$, which implies $ \val{p}(\omega) \geq m - 1$ and 
$$ 2g_F-2 \geq \sum\limits_{\mf{p}^m\mid\mid\mf{m}} (m-1)\, \deg\mf{p}. $$
Since $\mf{m}$ is a module violating this inequality, we yield the desired contradiction to the existence of $x$. Hence, the Selmer ray group $S_{\mf{m}}$ only contains the trivial class.
\end{proof}

\section{Artin-Schreier extensions with given conductor}

With the results of the last section, we easily obtain the number of Artin-Schreier extensions with given module. If we like to count extensions with given conductor, we are led to an inclusion-exclusion principle via the M\"obius formula. 

\begin{prop}\label{prop3-1} For any module $\mf{m}$, let $c_{\mf{m}}$ be the number of $\cyclic{p}^r$-extensions $E/F$ with conductor $\mf{m}$. Further, let
$$ e(X) = \prod\limits_{i=0}^{r-1} \frac{pX-p^i}{p^r-p^i} = \sum\limits_{i=0}^r e_i X^i. $$
Then we have $c_{\mf{1}} = e(|\Cl[p]|)$ for the trivial module $\mf{m}=\mf{1}$, and
$$ c_{\mf{m}} =  \sum\limits_{i=1}^r e_i  \sum\limits_{\mf{n}\mid\mf{m}} \mu(\mf{m}\mf{n}^{-1})  |S_{\mf{n}}|^i  |U_{\mf{n}}|^i $$
for nontrivial modules $\mf{m}\neq \mf{1}$, where $\mu(\mf{n})$ denotes the M\"obius function for modules. 
\end{prop}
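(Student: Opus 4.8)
The plan is to count $\cyclic{p}^r$-extensions with conductor exactly $\mf{m}$ by first counting those with module dividing $\mf{m}$ and then applying Möbius inversion for modules. The starting point is the class field theory dictionary: $\cyclic{p}^r$-extensions $E/F$ admitting $\mf{m}$ as a module correspond to surjections $\Cl_{\mf{m}}/\Cl_{\mf{m}}^p \twoheadrightarrow \cyclic{p}^r$, up to the action of $\Aut{\cyclic{p}^r}$, equivalently to index-$p^r$ subgroups $U\leq \Cl_{\mf{m}}$ with $\Cl_{\mf{m}}/U\iso\cyclic{p}^r$. So if $a_{\mf{m}}$ denotes the number of $\cyclic{p}^r$-extensions with module $\mf{m}$ (i.e.\ with conductor dividing $\mf{m}$), then $a_{\mf{m}}$ equals the number of such subgroups, which is a function only of the elementary abelian group $V_{\mf{m}} := \Cl_{\mf{m}}/\Cl_{\mf{m}}^p$, namely the number of $\ffield{p}$-subspaces of corank $r$ in $V_{\mf{m}}$. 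Writing $\dim_{\ffield{p}} V_{\mf{m}} = d_{\mf{m}}$, this count is the Gaussian binomial $\pbinom{d_{\mf{m}}}{r}$, and by a standard identity it equals $e(p^{d_{\mf{m}}})$ with $e(X)=\prod_{i=0}^{r-1}(pX-p^i)/(p^r-p^i)$ — note $e(X)$ is a polynomial of degree $r$ since the $i=0$ factor contributes $X$ and the remaining $r-1$ factors are linear in $X$, matching $e(X)=\sum_{i=0}^r e_iX^i$. Here one must be slightly careful: $a_{\mf{m}}$ should count subgroups giving quotient exactly $\cyclic{p}^r$ (not a smaller elementary abelian group), but since we pass to $V_{\mf{m}}=\Cl_{\mf{m}}/\Cl_{\mf{m}}^p$ every index-$p^r$ subspace automatically has quotient $\cyclic{p}^r$, so this is fine.

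Next I would plug in Lemma~\ref{prop2}: $|V_{\mf{m}}| = \grindex{\Cl_{\mf{m}}}{\Cl_{\mf{m}}^p} = p\,|S_{\mf{m}}|\,|U_{\mf{m}}|$, so $p^{d_{\mf{m}}} = p\,|S_{\mf{m}}|\,|U_{\mf{m}}|$ and hence
$$ a_{\mf{m}} = e\bigl(p\,|S_{\mf{m}}|\,|U_{\mf{m}}|\bigr) = \sum_{i=0}^r e_i\,\bigl(p\,|S_{\mf{m}}|\,|U_{\mf{m}}|\bigr)^i. $$
For the trivial module $\mf{m}=\mf{1}$ we have $S_{\mf{1}}$ trivial and $U_{\mf{1}}$ trivial, so $|V_{\mf{1}}| = p\cdot\grindex{\Cl}{\Cl^p}/|\Cl[p]|$; but more simply $V_{\mf{1}} = (F^{\mf{1}}/F_{\mf{1}})\big/(\cdots)^p$ composed into the class group sequence gives $|V_{\mf{1}}| = |\Cl[p]|$ directly — indeed the exact sequence of Proposition~\ref{prop1} with $\mf{m}=\mf{1}$ reads $1\to\Cl[p]\to 1\to V_{\mf{1}}\to\Cl/\Cl^p\to 1$ is not quite it, so instead I'd argue that unramified $\cyclic{p}^r$-extensions correspond to corank-$r$ subspaces of $\Cl/\Cl^p\iso\Cl[p]$, giving $c_{\mf{1}} = a_{\mf{1}} = e(|\Cl[p]|)$ as claimed.

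Then comes the inclusion–exclusion. For a nontrivial module $\mf{m}$, since $a_{\mf{n}} = \sum_{\mf{d}\mid\mf{n}} c_{\mf{d}}$ (every extension with module $\mf{n}$ has a unique conductor dividing $\mf{n}$), Möbius inversion for modules gives
$$ c_{\mf{m}} = \sum_{\mf{n}\mid\mf{m}} \mu(\mf{m}\mf{n}^{-1})\, a_{\mf{n}} = \sum_{\mf{n}\mid\mf{m}} \mu(\mf{m}\mf{n}^{-1}) \sum_{i=0}^r e_i\, p^i\, |S_{\mf{n}}|^i\, |U_{\mf{n}}|^i. $$
Swapping the order of summation, the $i=0$ term contributes $e_0\sum_{\mf{n}\mid\mf{m}}\mu(\mf{m}\mf{n}^{-1}) = e_0\cdot[\mf{m}=\mf{1}] = 0$ for $\mf{m}\neq\mf{1}$, which kills the constant term and lets the outer sum start at $i=1$. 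Absorbing the harmless factor $p^i$ into the definition of $e_i$ would change the normalization, so instead I'd note the statement as written must intend $e_i$ to already incorporate nothing extra — here I would either (a) check that with $e(X)=\prod(pX-p^i)/(p^r-p^i)$ the coefficients $e_i$ are such that the $p^i$ factors are what the author has folded in by writing $a_{\mf{n}} = e(|S_{\mf{n}}||U_{\mf{n}}|\cdot p)$ and then re-expanding, i.e.\ the $|S_{\mf{n}}|^i|U_{\mf{n}}|^i$ in the proposition comes with the $e_i$ being coefficients of $e(pX)$ rather than $e(X)$; or (b) more cleanly, observe $e(pX) = \sum_i (e_i p^i) X^i$ and the proposition's $e_i$ should be read accordingly. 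This bookkeeping about which polynomial's coefficients are called $e_i$ is the one genuinely fiddly point.

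The main obstacle is therefore not conceptual but notational: making the Gaussian-binomial count line up exactly with the polynomial $e(X)$ and its coefficients, and correctly tracking the factor $p = \grindex{\Cl_{\mf{m}}}{\Cl_{\mf{m}}^p}/(|S_{\mf{m}}||U_{\mf{m}}|)$ through the expansion so that the stated formula for $c_{\mf{m}}$ emerges with the sum running over $i=1,\dots,r$. Everything else — the class field theory correspondence, the identity $\pbinom{d}{r} = e(p^d)$, the vanishing of the $i=0$ term via $\sum_{\mf{n}\mid\mf{m}}\mu(\mf{m}\mf{n}^{-1})=0$, and the multiplicativity input from Lemma~\ref{prop2} — is routine given the preceding results.
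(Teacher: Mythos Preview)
Your approach is the paper's approach, and the overall structure---class field theory $\Rightarrow$ count corank-$r$ subspaces $\Rightarrow$ M\"obius inversion $\Rightarrow$ the $i=0$ term dies---is exactly right. The confusion you flag at the end is not notational but an actual slip in your Gaussian binomial identity: for an elementary abelian $p$-group $A$ with $|A|=p^d$, the number of subgroups with quotient $\cyclic{p}^r$ is
\[
\pbinom{d}{r} \;=\; \prod_{i=0}^{r-1}\frac{p^d-p^i}{p^r-p^i} \;=\; e(p^{d-1}) \;=\; e\bigl(|A|/p\bigr),
\]
not $e(|A|)$. The polynomial $e(X)$ is built with $pX-p^i$ in the numerator precisely so that its argument is $|A|/p$. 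Combined with Lemma~\ref{prop2}, which gives $\grindex{\Cl_{\mf{m}}}{\Cl_{\mf{m}}^p}=p\,|S_{\mf{m}}|\,|U_{\mf{m}}|$, you obtain directly $a_{\mf{m}}=e(|S_{\mf{m}}|\,|U_{\mf{m}}|)$ with no stray factor of~$p$, and the stated formula for $c_{\mf{m}}$ then falls out of M\"obius inversion with the $e_i$ exactly as defined. There is nothing to reinterpret.

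For $c_{\mf{1}}$ the clean route is the same formula rather than chasing the exact sequence: one has $U_{\mf{1}}=1$ trivially and $S_{\mf{1}}=S\iso\Cl[p]$ (this isomorphism is established in the proof of Proposition~\ref{prop1}), so $a_{\mf{1}}=e(|\Cl[p]|)$; since only the trivial module divides $\mf{1}$, $c_{\mf{1}}=a_{\mf{1}}$.
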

\begin{proof} For any elementary abelian group $A$, the number of subgroups $U\leq A$ with factor group $A/U\iso\cyclic{p}^r$ is given by $e(|A|/p)$.\footnote{E.g. see Hilfssatz 8.5, page 311 in \cite{HupA}.} Note that we do not need any assumptions on the $p$-rank of $A$, since $e(p^i)=0$ hold for $0\leq i\leq r-2$. Hence, by Lemma~\ref{prop2}, the number of all $\cyclic{p}^r$-extensions $E/F$ with module $\mf{m}$ sums up to 
$$ e(|S_{\mf{m}}| |U_{\mf{m}}|) = \sum\limits_{\mf{n}\mid\mf{m}} c_{\mf{n}}. $$
Now, $c_{\mf{1}} = e(|\Cl[p]|)$ follows for the trivial module $\mf{m}=\mf{1}$, since we have $S_{\mf{1}}= S \iso \Cl[p]$ and $U_{\mf{1}}=1$. Further, we obtain 
$$ c_{\mf{m}} = \sum\limits_{\mf{n}\mid\mf{m}} \mu(\mf{m}\mf{n}^{-1}) e(|S_{\mf{n}}| |U_{\mf{n}}|) = \sum\limits_{i=0}^r e_i  \sum\limits_{\mf{n}\mid\mf{m}} \mu(\mf{m}\mf{n}^{-1})  |S_{\mf{n}}|^i  |U_{\mf{n}}|^i  $$
by the inversion formula of M\"obius. The sum $\sum\nolimits_{\mf{n}|\mf{m}} \mu(\mf{n})$ vanishes for nontrivial modules $\mf{m}\neq \mf{1}$, whence the formula for $c_{\mf{m}}$ only depends on the indices $i=1,\ldots,r$. 
\end{proof}

With the M\"obius formula for $c_{\mf{m}}$, we near a local-global principle for the number of Artin-Schreier extensions. For example, suppose that the Selmer group $S$ of $F$ was trivial. Then all Selmer ray groups are trivial, and we would obtain
$$ c_{\mf{m}} = \sum\limits_{i=1}^r e_i \sum\limits_{\mf{n}\mid\mf{m}} \mu(\mf{m}\mf{n}^{-1}) |U_{n}|^i = \sum\limits_{i=1}^r e_i \prod\limits_{\mf{p}^m\mid\mid\mf{m}} \left( |U_{\mf{p}^m}|^i - |U_{\mf{p}^{m-1}}|^i \right). $$
The second equation, which we later shall prove in detail, is just a consequence of the multiplicativity of $\mu(\mf{n})$ and $|U_{\mf{n}}|^i$. By Lemma~\ref{prop4}, we might hope for formulas like the above, since the Selmer ray group $S_{\mf{m}}$ is trivial for modules $\mf{m}$ with a large square divisor. But we must carefully regard the Selmer ray groups $S_{\mf{n}}$ for divisors $\mf{n}\mid\mf{m}$, too. With the following lemma, we shall see, that a huge family of modules allows above formula, whereas formulas for the infinite complementary family depends only on finitely many modules. This yields a crucial tool for computing $\Phi(F,\cyclic{p}^r;s)$ in the next section. 
Call a module $\mf{m}$ squareful, if every prime divisor $\mf{p}\mid\mf{m}$ has at least order $\val{p}(\mf{m})\geq 2$. Let $\mc{M}$ be the finite set consisting of the trivial module $\mf{m}=\mf{1}$ and the squareful modules $\mf{m}$ supported only by primes $\mf{p}$ with $\deg\mf{p} \leq 2g_F-2$ and $\ord{p}{\mf{m}}\leq 2g_F$. Note that $\mc{M} $ consists only of the trivial module in case of $g_F=0,1$.

\begin{lemma}\label{prop3-2} The number $c_{\mf{m}}$ of $\cyclic{p}^r$-extensions $E/F$ with conductor $\mf{m}$ fulfils the following formulas.
\begin{enumerate}
 \item[\textup{(a)}] Let $\mf{m}=\mf{1}$ be the trivial module. Then $c_{\mf{1}}= e(|\Cl[p]|)$ holds. 
 \item[\textup{(b)}] Let $\mf{m}=\mf{m}_0\mf{m}_1^2$ with $\mf{m}_0\in\mc{M}$ and $\mf{m}_1$ being a squarefree module supported by primes $\mf{p}$ of degree $\deg\mf{p}>2g_F-2$ only. Then
$$ c_{\mf{m}}  =  \mu(\mf{m}_1)  \tilde{c}_{\mf{m}_0} + \sum\limits_{i=1}^r e_i  \prod\limits_{\mf{p}^m\mid\mid\mf{m}}
\left(\N{p}^{i r_m}-\N{p}^{i r_{m-1}}\right)
$$
holds with $ \tilde{c}_{\mf{1}} = c_{\mf{1}} - \sum\nolimits_{i=0}^r e_i $ for the trivial module $\mf{m}_0 = \mf{1}$ and 
 $$ \tilde{c}_{\mf{m}_0} = c_{\mf{m}_0} - \sum\limits_{i=1}^r e_i \prod\limits_{\mf{p}^m\mid\mid\mf{m}_0}
\left(\N{p}^{i r_m}-\N{p}^{i r_{m-1}}\right)  $$
otherwise.
 \item[\textup{(c)}] Let $\mf{m}$ be a module not covered by \textup{(a,b)}. Then   
$$ c_{\mf{m}} = \sum\limits_{i=1}^r e_i \prod\limits_{\mf{p}^m\mid\mid\mf{m}}
\left(\N{p}^{i r_m}-\N{p}^{i r_{m-1}}\right) $$
holds. 
In particular, we have $c_{\mf{m}} = 0$  if $\mf{m}$ is not squareful, or more generally, has a prime divisor $\mf{p}$ with valuation $\val{p}(\mf{m}) \equiv 1 \pmod{p}$.
\end{enumerate}
\end{lemma}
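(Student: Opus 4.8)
The plan is to derive all three formulas from the M\"obius expansion of Proposition~\ref{prop3-1}, namely $c_{\mf{m}} = \sum_{i=1}^{r} e_i \sum_{\mf{n}\mid\mf{m}} \mu(\mf{m}\mf{n}^{-1})\,|S_{\mf{n}}|^{i}\,|U_{\mf{n}}|^{i}$ for $\mf{m}\neq\mf{1}$, the remaining work being to pin down the Selmer ray groups $S_{\mf{n}}$ for the divisors $\mf{n}$ that survive the sieve, i.e.\ those with $\mf{m}\mf{n}^{-1}$ squarefree. The one elementary tool I will use throughout is that for a multiplicative arithmetic function $g$ of modules with $g(\mf{1})=1$ one has $\sum_{\mf{n}\mid\mf{m}}\mu(\mf{m}\mf{n}^{-1})\,g(\mf{n}) = \prod_{\mf{p}^m\mid\mid\mf{m}}\bigl(g(\mf{p}^m)-g(\mf{p}^{m-1})\bigr)$; applied to $g(\mf{n})=|U_{\mf{n}}|^{i}=\prod_{\mf{p}^m\mid\mid\mf{n}}\N{p}^{ir_m}$, which is multiplicative by Lemma~\ref{prop2} (with $r_0=0$), this yields $\sum_{\mf{n}\mid\mf{m}}\mu(\mf{m}\mf{n}^{-1})|U_{\mf{n}}|^{i}=\prod_{\mf{p}^m\mid\mid\mf{m}}(\N{p}^{ir_m}-\N{p}^{ir_{m-1}})$, the ``naive'' product that appears in (b) and (c). Part~(a) is nothing but Proposition~\ref{prop3-1}.

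For part~(c) I would first factor $\mf{m}=\mf{m}'\mf{s}$, where $\mf{s}$ is the product of the primes of $\mf{m}$ of exact valuation $1$ and $\mf{m}'$ is squareful. Stripping those valuation-one primes off one at a time, Corollary~\ref{prop3} gives $|S_{\mf{n}}|=|S_{\mf{n}'}|$ for every surviving $\mf{n}$, with $\mf{n}'$ the part of $\mf{n}$ supported on $\mf{m}'$; since $\mu$ and $|U_\bullet|^{i}$ are multiplicative, the inner sum splits as a sum over $\mf{n}'\mid\mf{m}'$ times the factor $\sum_{\mf{n}_s\mid\mf{s}}\mu(\mf{s}\mf{n}_s^{-1})|U_{\mf{n}_s}|^{i}=\prod_{\mf{p}\mid\mf{s}}(\N{p}^{ir_1}-\N{p}^{ir_0})$. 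If $\mf{s}\neq\mf{1}$ this last product is $0$ (as $r_1=r_0=0$), so $c_{\mf{m}}=0$ and the claimed formula holds because it contains the same vanishing factor; the same phenomenon, via $r_m=r_{m-1}$, gives $c_{\mf{m}}=0$ whenever some prime of $\mf{m}$ has valuation $\equiv 1\pmod p$. If instead $\mf{m}=\mf{m}'$ is squareful but not of the shape in~(b), it carries a ``heavy'' prime $\mf{q}$, in the sense that either $\deg\mf{q}\leq 2g_F-2$ and $\val{q}(\mf{m})>2g_F$, or $\deg\mf{q}>2g_F-2$ and $\val{q}(\mf{m})\geq 3$; in both cases $(\val{q}(\mf{m})-2)\deg\mf{q}\geq 2g_F-1>2g_F-2$, so every surviving $\mf{n}$, whose valuation at $\mf{q}$ drops by at most one, still meets the hypothesis of Lemma~\ref{prop4} and has $S_{\mf{n}}$ trivial; the multiplicativity identity then gives the formula of~(c).

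For part~(b), writing $\mf{m}=\mf{m}_0\mf{m}_1^{2}$ I would split each surviving divisor as $\mf{n}=\mf{n}_0\mf{n}_1$, with $\mf{n}_0$ supported on the primes of $\mf{m}_0$ (degree $\leq 2g_F-2$) and $\mf{n}_1$ on the primes of $\mf{m}_1$ (degree $>2g_F-2$). If $\mf{n}_1\neq\mf{m}_1$ then some such prime $\mf{p}$ still has $\val{p}(\mf{n})=2$, so $(\val{p}(\mf{n})-1)\deg\mf{p}=\deg\mf{p}>2g_F-2$ and $S_{\mf{n}}$ is trivial by Lemma~\ref{prop4}; if $\mf{n}_1=\mf{m}_1$ then, $\mf{m}_1$ being squarefree and prime to $\mf{n}_0$, Corollary~\ref{prop3} gives $|S_{\mf{n}}|=|S_{\mf{n}_0}|$. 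As $r_1=0$ forces $|U_{\mf{m}_1}|^{i}=1$, the inner sum collapses to
$$\sum_{\mf{n}\mid\mf{m}}\mu(\mf{m}\mf{n}^{-1})|S_{\mf{n}}|^{i}|U_{\mf{n}}|^{i} = \bigl(P_1^{(i)}-\mu(\mf{m}_1)\bigr)P_0^{(i)} + \mu(\mf{m}_1)\,\Sigma_0^{(i)},$$
where $P_0^{(i)}=\prod_{\mf{p}^m\mid\mid\mf{m}_0}(\N{p}^{ir_m}-\N{p}^{ir_{m-1}})$, $P_1^{(i)}=\prod_{\mf{p}\mid\mf{m}_1}(\N{p}^{ir_2}-1)$, so that $P_0^{(i)}P_1^{(i)}=\prod_{\mf{p}^m\mid\mid\mf{m}}(\N{p}^{ir_m}-\N{p}^{ir_{m-1}})$, and $\Sigma_0^{(i)}=\sum_{\mf{n}_0\mid\mf{m}_0}\mu(\mf{m}_0\mf{n}_0^{-1})|S_{\mf{n}_0}|^{i}|U_{\mf{n}_0}|^{i}$. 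Multiplying by $e_i$ and summing over $i$, the term $\sum_i e_i\Sigma_0^{(i)}$ equals $c_{\mf{m}_0}$ by Proposition~\ref{prop3-1} when $\mf{m}_0\neq\mf{1}$ (and for $\mf{m}_0=\mf{1}$ one uses $S_{\mf{1}}=S\iso\Cl[p]$, $U_{\mf{1}}=1$ and $c_{\mf{1}}=e(|\Cl[p]|)$, the index $i=0$ contributing nothing since $|\Cl[p]|^{0}-1=0$), and regrouping gives exactly $c_{\mf{m}}=\sum_i e_i\prod_{\mf{p}^m\mid\mid\mf{m}}(\N{p}^{ir_m}-\N{p}^{ir_{m-1}})+\mu(\mf{m}_1)\tilde{c}_{\mf{m}_0}$ with the $\tilde{c}_{\mf{m}_0}$ of the statement.

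I expect the main obstacle to be precisely this Selmer bookkeeping: for each divisor $\mf{n}$ surviving the sieve one must decide whether $S_{\mf{n}}$ is trivial, and in the single situation where it need not be --- part~(b) with the high-degree squarefree part fully collapsed, together with its degenerate instance $\mf{m}_0=\mf{1}$ --- one must recognise the residual Selmer contribution as $c_{\mf{m}_0}$ through Proposition~\ref{prop3-1}. The inequalities distinguishing $\mc{M}$, the squareful modules and the rest are elementary, and everything else reduces to the multiplicativity identity.
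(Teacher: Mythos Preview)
Your proposal is correct and follows essentially the same route as the paper: start from the M\"obius formula of Proposition~\ref{prop3-1}, use the multiplicativity of $|U_{\mf{n}}|^i$ to produce the ``naive'' product, and then control the Selmer ray groups $S_{\mf{n}}$ for the surviving divisors via Corollary~\ref{prop3} and Lemma~\ref{prop4}. The only organisational difference is that the paper packages the argument by setting $a_{\mf{m}}=c_{\mf{m}}-b_{\mf{m}}$ and rewriting it as $\sum_i e_i\sum_{\mf{n}\mid\mf{m}}\mu(\mf{m}\mf{n}^{-1})(|S_{\mf{n}}|^i-1)|U_{\mf{n}}|^i$, so that triviality of $S_{\mf{n}}$ kills terms directly; your direct split into $(P_1^{(i)}-\mu(\mf{m}_1))P_0^{(i)}+\mu(\mf{m}_1)\Sigma_0^{(i)}$ accomplishes the same thing. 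One point where your write-up is actually more careful than the paper: in case~(c) you first strip off the valuation-one primes via Corollary~\ref{prop3} before invoking the ``heavy prime'' argument, whereas the paper's sentence ``there is a multiple prime divisor $\mf{p}^m\mid\mid\mf{m}$ with $m>2$'' tacitly assumes $\mf{m}$ is already squareful.
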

Note that this lemma does not provide any information on $c_{\mf{m}}$ for $\mf{m}\in\mc{M}\setminus\{\mf{1}\}$.
\begin{proof}  Assertion (a) is just a restatement of Proposition~\ref{prop3-1}. For the rest of the proof, we can consider $\mf{m}$ to be nontrivial. Let $a_{\mf{m}}$ be given by $a_{\mf{m}} = c_{\mf{m}} - b_{\mf{m}}$ and
$$ b_{\mf{m}} = \sum\limits_{i=1}^r e_i \prod\limits_{\mf{p}^m\mid\mid\mf{m}} \left(\N{p}^{i r_m}-\N{p}^{i r_{m-1}}\right). $$
Then assertions (b) and (c) are equivalent to $a_{\mathfrak{m}} = \mu(\mathfrak{m}_1) \tilde{c}_{\mathfrak{m}_0}$ and $a_{\mathfrak{m}}=0$ respectively. Note that $\tilde{c}_{\mathfrak{1}} = a_{\mathfrak{1}} - e_0$ and $\tilde{c}_{\mathfrak{m}_0} = a_{\mathfrak{m}_0}$ hold for $\mathfrak{m}_0 \neq \mathfrak{1}$. We start with the identity $\N{p}^{ir_m} = |U_{\mf{p}^m}|^i$ provided by Lemma~\ref{prop2}. Then 
$$ \N{p}^{ir_m} - \N{p}^{ir_{m-1}} = |U_{\mf{p}^m}|^i - |U_{\mf{p}^{m-1}}|^i = \sum\limits_{n=0}^m \mu(\mf{p}^{m-n}) |U_{\mf{p}^n}|^i $$
follows, as $\mu(\mf{n})$ vanishes on squares. The order $|U_{\mf{n}}|^i$ provides a multiplicative function for modules, whence we obtain 
$$ \prod\limits_{\mf{p}^m\mid\mid\mf{m}} \sum\limits_{n=0}^m \mu(\mf{p}^{m-n}) |U_{\mf{p}^n}|^i = \sum\limits_{\mf{n}\mid\mf{m}} \mu(\mf{m}\mf{n}^{-1}) \prod\limits_{\mf{p}^n\mid\mid\mf{n}} |U_{\mf{p}^n}|^i = \sum\limits_{\mf{n}\mid\mf{m}} \mu(\mf{m}\mf{n}^{-1}) |U_{\mf{n}}|^i. $$
Putting all together, this results in
\begin{equation}\label{MoebiusFormulaCaseC0}
 b_{\mf{m}} = \sum\limits_{i=1}^r e_i \sum\limits_{\mf{n}|\mf{m}} \mu(\mf{m}\mf{n}^{-1}) |U_{\mf{n}}|^i. 
\end{equation}
As we consider $\mf{m}$ to be nontrivial, we obtain 
\begin{equation}\label{MoebiusFormulaCaseC1}
  a_{\mf{m}} = c_{\mf{m}} - b_{\mf{m}} = \sum\limits_{i=1}^r e_i \sum\limits_{\mf{n}\mid\mf{m}} \mu(\mf{m}\mf{n}^{-1}) (|S_{\mf{n}}|^i-1) 
|U_{\mf{n}}|^i 
\end{equation}
by Proposition~\ref{prop3-1}. Now, let $\mf{m}$ be a module as considered in assertion~(c). Then there is a multiple prime divisor $\mf{p}^m\mid\mid\mf{m}$ with $m>2$, such that $\deg \mf{p} > 2g_F -2$ or $m>2g_F$ holds. Any divisor $\mf{n}\mid\mf{m}$ with $\mu(\mf{m}\mf{n}^{-1})\neq 0$ is divisible by $\mf{p}^{m-1}$, whence its Selmer ray group $S_{\mf{n}}$ is trivial by Lemma~\ref{prop4}. This yields $a_{\mf{m}} = 0$ by formula~(\ref{MoebiusFormulaCaseC1}), as desired. Now, let $\mf{m} = \mf{m}_0\mf{m}_1^2$ be a module as considered in assertion~(b). Any divisor $\mf{n}\mid\mf{m}$ with $\mu(\mf{m}\mf{n}^{-1})\neq 0$ has necessarily the form $\mf{n}=\mf{n}_0\mf{m}_1\mf{n}_1$ with $\mf{n}_0\mid\mf{m}_0$ and $\mf{n}_1\mid\mf{m}_1$, as $\mf{m}\mf{n}^{-1}$ would contain a square otherwise. Considering this form of $\mf{n}$, the Selmer ray group $S_{\mf{n}}$ is trivial for $\mf{n}_1\neq \mf{1}$ by Lemma~\ref{prop4}. Hence, the right hand side in formula~(\ref{MoebiusFormulaCaseC1}) only depends on modules $\mf{n}=\mf{n}_0\mf{m}_1$ with $\mf{n}_0 \mid \mf{m}_0$, and we are led to 
\begin{align*} a_{\mf{m}} & =  \sum\limits_{i=1}^r e_i \sum\limits_{\mf{n}_0\mid\mf{m}_0} \mu(\mf{m}\mf{n}_0^{-1}\mf{m}_1^{-1})(|S_{\mf{n}_0\mf{m}_1}|^i-1) |U_{\mf{n}_0\mf{m}_1}|^i 
\end{align*}
Since $\mf{m}_1$ is squarefree, we have $|U_{\mf{m}_1}|=1$ and $|S_{\mf{n}_0\mf{m}_1}|=|S_{\mf{n}_0}|$  by Lemma~\ref{prop2} and Corollary~\ref{prop3} respectively. Using the multiplicativity of $\mu(\mf{n})$ and $|U_{\mf{n}}|$, we now obtain
$$ a_{\mf{m}} = \mu(\mf{m}_1) \sum\limits_{i=1}^r e_i \sum\limits_{\mf{n}_0\mid\mf{m}_0} \mu(\mf{m}_0\mf{n}_0^{-1}) (|S_{\mf{n}_0}|^i-1) |U_{\mf{n}_0}|^i. $$
With the formula for $a_{\mf{m}_0}$ given by formula~(\ref{MoebiusFormulaCaseC1}), this implies the desired identity $ a_{\mf{m}} = \mu(\mf{m}_1) \tilde{c}_{\mf{m}_0} $ in the case $\mf{m}_0\neq\mf{1}$. If $\mf{m}_0=\mf{1}$ is the trivial module, we  obtain 
$$ a_{\mf{m}} = \mu(\mf{m}_1) \sum\limits_{i=1}^r e_i (|S|^i -1) = \mu(\mf{m}_1) \left( e(|S|) - e_0 - \sum\limits_{i=1}^r e_i \right) = \mu(\mf{m}_1) \tilde{c}_{\mf{1}}  $$
as a consequence of assertion~(a). 
\end{proof}

\section{Dirichlet series of Artin-Schreier extensions}

This section contains a rather complete description of the Dirichlet series
$$ \Phi(F,\cyclic{p}^r;s) = \sum\limits_{\Gal{E}{F} \iso \cyclic{p}^r} \Norm{\mf{f}(E/F)}^{-s} = \sum\limits_{\mf{m}} c_{\mf{m}} \N{m}^{-s}, $$  
generated by the $\cyclic{p}^r$-extensions $E/F$ and by the modules $\mf{m}$ of $F$ respectively. As a consequence of Lemma~\ref{prop3-2}, the series $\Phi(F,\cyclic{p}^r;s)$ can be decomposed in a finite sum of Euler products $\Phi_i(s)$ and an error term $\Upsilon_r(s)$ with
\begin{equation}\label{Phi}
 \Phi_i(s) = \prod\limits_{\mf{p}} \bigg( 1 + (\N{p}^i-1) \sum\limits_{p\, \nmid\, n} \N{p}^{ir_n-(n+1)s} \bigg). 
\end{equation}
 
\begin{lemma}\label{prop4-1} The Dirichlet series $\Phi(F,\cyclic{p}^r;s)$ has the decomposition 
$$ \Phi(F,\cyclic{p}^r;s) = \sum\limits_{i=1}^r e_i \Phi_i(s) + \Upsilon_r(s), $$
where $\Upsilon_r(s)$ is a rational function in $t=q^{-s}$ without poles for $\Re(s)>1/4$.  
\end{lemma}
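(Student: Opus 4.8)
The plan is to feed the case distinction of Lemma~\ref{prop3-2} into $\Phi(F,\cyclic{p}^r;s)=\sum_{\mf{m}}c_{\mf{m}}\N{m}^{-s}$ and to split off a multiplicative main term. For every module $\mf{m}$ put
$$ b_{\mf{m}}=\sum_{i=1}^r e_i\prod_{\mf{p}^m\mid\mid\mf{m}}\big(\N{p}^{ir_m}-\N{p}^{ir_{m-1}}\big),\qquad a_{\mf{m}}=c_{\mf{m}}-b_{\mf{m}},\qquad \Upsilon_r(s)=\sum_{\mf{m}}a_{\mf{m}}\N{m}^{-s}, $$
so that trivially $\Phi(F,\cyclic{p}^r;s)=\sum_{\mf{m}}b_{\mf{m}}\N{m}^{-s}+\Upsilon_r(s)$. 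It then remains to identify $\sum_{\mf{m}}b_{\mf{m}}\N{m}^{-s}$ with $\sum_{i=1}^r e_i\Phi_i(s)$ and to show that $\Upsilon_r(s)$ has the asserted shape.

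For the first point, note that for each $i$ the arithmetic function $\mf{m}\mapsto\prod_{\mf{p}^m\mid\mid\mf{m}}(\N{p}^{ir_m}-\N{p}^{ir_{m-1}})$ is multiplicative, so (with absolute convergence for $\Re(s)$ large) its Dirichlet series is the Euler product with local factor $1+\sum_{m\geq 1}(\N{p}^{ir_m}-\N{p}^{ir_{m-1}})\N{p}^{-ms}$. From $r_m=m-1-\lfloor(m-1)/p\rfloor$ one reads off that $r_m=r_{m-1}$ when $p\mid m-1$ and $r_m=r_{m-1}+1$ when $p\nmid m-1$; hence the $m$-th term vanishes unless $n:=m-1$ satisfies $p\nmid n$, in which case it equals $(\N{p}^i-1)\N{p}^{ir_n-(n+1)s}$. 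Thus the local factor is precisely the one in~(\ref{Phi}), which gives $\sum_{\mf{m}}b_{\mf{m}}\N{m}^{-s}=\sum_{i=1}^r e_i\Phi_i(s)$, first in the region of absolute convergence and then by analytic continuation.

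For $\Upsilon_r(s)$ I invoke Lemma~\ref{prop3-2}. By part~(c), $a_{\mf{m}}=0$ unless $\mf{m}=\mf{m}_0\mf{m}_1^2$ with $\mf{m}_0\in\mc{M}$ and $\mf{m}_1$ a squarefree module supported only by primes of degree $>2g_F-2$; such a decomposition, when it exists, is unique, since the primes of $\mf{m}$ of degree $\leq 2g_F-2$ must constitute $\mf{m}_0$ and the remaining ones --- each then necessarily of exponent $2$ in $\mf{m}$ --- constitute $\mf{m}_1^2$. If $\mf{m}_1=\mf{1}$ then $\mf{m}=\mf{m}_0\in\mc{M}$, a finite set; if $\mf{m}_1\neq\mf{1}$ then $a_{\mf{m}}=\mu(\mf{m}_1)\tilde{c}_{\mf{m}_0}$ by part~(b). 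Splitting $\Upsilon_r(s)$ along these two families, using $\Norm{\mf{m}_0\mf{m}_1^2}^{-s}=\Norm{\mf{m}_0}^{-s}\Norm{\mf{m}_1}^{-2s}$ together with the identity $\prod_{\deg\mf{p}>2g_F-2}(1-\N{p}^{-2s})=\zeta_F(2s)^{-1}\prod_{\deg\mf{p}\leq 2g_F-2}(1-\N{p}^{-2s})^{-1}$ (the Dirichlet series $\sum_{\mf{m}_1}\mu(\mf{m}_1)\Norm{\mf{m}_1}^{-2s}$ of the squarefree modules $\mf{m}_1$ supported by primes of degree $>2g_F-2$), one obtains
$$ \Upsilon_r(s)=\sum_{\mf{m}\in\mc{M}}a_{\mf{m}}\N{m}^{-s}+\bigg(\sum_{\mf{m}_0\in\mc{M}}\tilde{c}_{\mf{m}_0}\,\Norm{\mf{m}_0}^{-s}\bigg)\bigg(\zeta_F(2s)^{-1}\prod_{\deg\mf{p}\leq 2g_F-2}\big(1-\N{p}^{-2s}\big)^{-1}-1\bigg). $$

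Finally I read off the analytic behaviour. The two sums over the finite set $\mc{M}$ are polynomials in $t=q^{-s}$; the finite product $\prod_{\deg\mf{p}\leq 2g_F-2}(1-\N{p}^{-2s})^{-1}$ is a rational function of $t$ with poles only on $\Re(s)=0$; and $\zeta_F(2s)^{-1}=(1-q^{-2s})(1-q^{1-2s})/P_F(q^{-2s})$, with $P_F$ the numerator polynomial of $\zeta_F$, is a rational function of $t$ whose poles --- the zeros of $P_F(q^{-2s})$ --- all lie on $\Re(s)=1/4$ by the Riemann hypothesis for function fields. Hence $\Upsilon_r(s)$ is a rational function of $t$ holomorphic in $\Re(s)>1/4$, as claimed. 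I expect the only delicate part to be the bookkeeping of the previous paragraph: checking that parts (a), (b), (c) of Lemma~\ref{prop3-2} together with the finite set $\mc{M}$ really exhaust the modules without overlap, so that the infinite case-(b) family reassembles cleanly into the $\zeta_F(2s)^{-1}$-factor above; granting this, the analytic statement is immediate from Weil's bound.
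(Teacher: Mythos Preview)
Your argument is correct and follows essentially the same route as the paper: rewrite the Euler factor of $\Phi_i(s)$ as $1+\sum_{m\ge 1}(\N{p}^{ir_m}-\N{p}^{ir_{m-1}})\N{p}^{-ms}$, identify $\sum_{\mf m}b_{\mf m}\N{m}^{-s}$ with $\sum_i e_i\Phi_i(s)$, and then use Lemma~\ref{prop3-2} to see that the remainder $\Upsilon_r(s)$ is a finite $\mc M$-sum times the restricted Euler product $\prod_{\deg\mf p>2g_F-2}(1-\N{p}^{-2s})=\zeta_F(2s)^{-1}\cdot g(q^{-s})^{-1}$, whence Hasse--Weil gives the pole location. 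Your extra care in splitting off the case $\mf m_1=\mf 1$ (so that the infinite family uses only the formula $a_{\mf m}=\mu(\mf m_1)\tilde c_{\mf m_0}$, valid for $\mf m\neq\mf 1$) makes the bookkeeping cleaner than the paper's displayed expression for $\Upsilon_r(s)$, but the analytic conclusion is identical.
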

\begin{proof} Let $\Phi_{i,\mf{p}}(s)$ be the Euler factor of $\Phi_i(s)$ corresponding to the prime $\mf{p}$. By definition in Lemma \ref{prop2}, we have $r_n=n-1-\lfloor (n-1)/p\rfloor$. Since $r_{n+1}=r_n$ holds for $p\mid n$, and $r_{n+1}=1+r_n$ otherwise, it is easy to verify
$$ \Phi_{i,\mf{p}}(s) = 1 + (\N{p}^i-1) \sum\limits_{p\, \nmid\,  n} \N{p}^{ir_n-(n+1)s} = 1 + \sum\limits_{m\geq 1} (\N{p}^{ir_m}-\N{p}^{ir_{m-1}})\N{p}^{-ms} $$
by the transformation $m=n+1$. We hence obtain 
$$ \sum\limits_{i=1}^r e_i \Phi_i(s) = \sum\limits_{\mf{m}}  \left( \sum\limits_{i=1}^r e_i \prod\limits_{\mf{p}^{m}\mid\mid\mf{m}}
\left(\N{p}^{ir_m}-\N{p}^{ir_{m-1}}\right) \right) \N{m}^{-s} . $$
The coefficient of the latter Dirichlet series corresponding to $\mf{m}$ coincides with $c_{\mf{m}}$, if $\mf{m}$ is of the form as considered in Lemma~\ref{prop3-2} (c). Therefore, the difference $\Upsilon_r(s) = \Phi(F,\cyclic{p}^r;s) - \sum\nolimits_{i=1}^r e_i\Phi_i(s)$ is generated by the modules $\mf{m}$ as considered in Lemma~\ref{prop3-2} (a,b). Then we obtain
$$ \Upsilon_r(s) = \left( c_{\mf{1}} - \sum\limits_{i=1}^r e_i \right) + \left( \sum\limits_{\mf{m}_0\in\mc{M}} \tilde{c}_{\mf{m}_0} \Norm{\mf{m}_0}^{-s} \prod\limits_{\deg \mf{p}>2g_F-2} (1-\N{p}^{-2s}) \right), 
$$
where the big parentheses correspond to case (a) and (b) respectively. Note hereby, that the restricted Euler product $\prod (1 - \N{p}^{-2s})$ equals the restricted series $\sum \mu(\mf{m}_1) \Norm{\mf{m}_1}^{-2s}$ generated by the modules $\mf{m}_1$ as given in (b). Since $\mc{M}$ is finite, the sum over its elements is a polynomial $f(q^{-s})$ in $q^{-s}$. Furthermore, there are only finitely many primes $\mf{p}$ of degree $\deg\mf{p}\leq 2g_F-2$, and the product $g(q^{-s})$ over their Euler factors $(1-\N{p}^{-2s})$ is a polynomial as well. Consequently, $\Upsilon_r(s)$ is rational in $q^{-s}$ with formula
\begin{equation}\label{Upsilon} \Upsilon_r(s) = c_{\mf{1}} - \sum\limits_{i=1}^r e_i + \frac{f(q^{-s})}{g(q^{-s})} \, \zeta_F(2s)^{-1}, 
\end{equation}
where $\zeta_F(s)$ is the zeta function of $F$. The poles of $\Upsilon_r(s)$ are located on the imaginary axes $\Re(s)=0$, corresponding to the zeros of $g(q^{-s})$, and $\Re(s)=1/4$, corresponding to the zeros of $\zeta_F(2s)$ by the theorem of Hasse-Weil, also known as the function field analogue of the Riemann hypothesis.\footnote{E.g. see Theorem 5.2.1 and Remark 5.2.2, page 197 in \cite{StiALF}.}  
\end{proof}

Note that we could compute the function $\Upsilon_r(s)$ with formula~(\ref{Upsilon}) and the knowledge of only finitely many values of $c_{\mf{m}}$, namely those for $\mf{m}\in\mc{M}$. Hence, the infinite set of modules, which values of $c_{\mf{m}}$ do not behave uniformly, yield a quite manageable error term, but it depends strongly on the given field $F$. The dependence on $G=\cyclic{p}^r$ is very loosely. For the Euler products $\Phi_i(s)$ however, there is virtually no dependence on $F$ and $G$, only it is defined over the primes of $F$, and $i\leq r$ holds, of course. We hence can investigate the Euler product $\Phi_r(s)$ in its own right for any integer $r\geq 1$, and aim to describe $\Phi_r(s)$ as a product of zeta functions.

\begin{prop}\label{prop4-2} The Euler factors $\Phi_{r,\mf{p}}(s)$ of $\Phi_r(s)$ are meromorphic continuable for all complex arguments~$s$ with
$$ \Phi_{r,\mf{p}}(s) = \left(1-\N{p}^{-s}\N{p}^{(p-1)(r-s)}\right)^{-1} \left(1 + \N{p}^{-s}\sum\limits_{l=0}^{p-2} \N{p}^{l(r-s)}\right) \left(1-\N{p}^{-s}\right) .  $$
\end{prop}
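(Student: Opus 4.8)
The plan is to evaluate the Euler factor $\Phi_{r,\mf{p}}(s)$ explicitly as a rational function of $q^{-s}$ and then rearrange it into the product form asserted. The starting point is the local factor extracted in the proof of Lemma~\ref{prop4-1}: specialising (\ref{Phi}) to $i=r$,
$$ \Phi_{r,\mf{p}}(s) = 1 + (\N{p}^r-1)\sum_{p\,\nmid\,n}\N{p}^{r r_n - (n+1)s}, \qquad r_n = n-1-\lfloor (n-1)/p\rfloor . $$
First I would remove the floor function: every integer $n\geq 1$ with $p\nmid n$ is uniquely of the form $n = kp+l+1$ with $k\geq 0$ and $0\leq l\leq p-2$, the residue $l=p-1$ being exactly the one excluded by $p\nmid n$. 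For such $n$ one has $\lfloor (n-1)/p\rfloor = k$, hence $r_n = k(p-1)+l$ and $n+1 = kp+l+2$.

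Substituting this and splitting the resulting double sum into a product of two geometric series — one in $\N{p}^{r-s}$ indexed by $l$, and one in $\N{p}^{-s}\N{p}^{(p-1)(r-s)}$ indexed by $k$ — gives
$$ \sum_{p\,\nmid\,n}\N{p}^{r r_n - (n+1)s} = \frac{\N{p}^{-2s}\sum_{l=0}^{p-2}\N{p}^{l(r-s)}}{1-\N{p}^{-s}\N{p}^{(p-1)(r-s)}} . $$
This identity holds for $\Re(s)$ large by absolute convergence, and the right-hand side, being a rational function of $q^{-s}$ (recall $\N{p}$ is a power of $q$), supplies the meromorphic continuation to all complex $s$; this already gives the continuation claim. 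It then remains to check, using $\N{p}^r-1 = (\N{p}^{r-s}-\N{p}^{-s})\N{p}^s$, the algebraic identity
$$ 1 + (\N{p}^{r-s}-\N{p}^{-s})\N{p}^s\cdot\frac{\N{p}^{-2s}\sum_{l=0}^{p-2}\N{p}^{l(r-s)}}{1-\N{p}^{-s}\N{p}^{(p-1)(r-s)}} = \frac{\bigl(1+\N{p}^{-s}\sum_{l=0}^{p-2}\N{p}^{l(r-s)}\bigr)\bigl(1-\N{p}^{-s}\bigr)}{1-\N{p}^{-s}\N{p}^{(p-1)(r-s)}} . $$
Setting $v=\N{p}^{r-s}$ and $w=\N{p}^{-s}$ and clearing the common denominator $1-wv^{p-1}$, this collapses to the elementary identity $(v-1)\sum_{l=0}^{p-2}v^l = v^{p-1}-1$, which would complete the argument.

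I do not expect a genuine obstacle; the argument is a routine summation. The one place that needs care is the bookkeeping in the substitution $n = kp+l+1$: keeping track of how the floor in $r_n$ interacts with the condition $p\nmid n$, and matching the exponent $r r_n - (n+1)s$ so that the double sum factors cleanly as a product of two independent geometric series. Once the series is in closed form, passing to the stated product is a single line, and the meromorphic continuation is immediate from rationality in $q^{-s}$.
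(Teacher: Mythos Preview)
Your proposal is correct and follows essentially the same route as the paper: the same parametrisation $n=kp+l+1$ with $0\le l\le p-2$, the same computation $r_n=k(p-1)+l$, and the same factorisation of the resulting double geometric series over the common denominator $1-\N{p}^{(p-1)r-ps}=1-\N{p}^{-s}\N{p}^{(p-1)(r-s)}$. The only cosmetic difference is in the final algebraic check: the paper verifies the numerator identity by a direct telescoping of $(\N{p}^r-1)\sum_{l}\N{p}^{l(r-s)-2s}$, whereas you substitute $v=\N{p}^{r-s}$, $w=\N{p}^{-s}$ and reduce to $(v-1)\sum_{l=0}^{p-2}v^l=v^{p-1}-1$, which is the same computation in cleaner clothing.
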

\begin{proof} Consider the sum running over $p\nmid n$ of the Euler factor $\Phi_{r,\mf{p}}(s)$ as given in (\ref{Phi}). The occurring $\N{p}$-exponents are of the form $rr_n - (n+1)s$. By definition in Lemma \ref{prop2}, we have $r_n=n-1-\lfloor (n-1)/p\rfloor$. As $n$ is indivisible by $p$, we can write $n=kp+l+1$ for some $k\geq 0$ and $0\leq l\leq p-2$, and $r_n = k(p-1)+l$ follows. Altogether, we obtain
$$ rr_n - (n+1)s = k ((p-1)r-ps) + l(r-s) - 2s  $$
and
$$ \sum\limits_{p\, \nmid\, n} \N{p}^{rr_n-(n+1)s} = \sum\limits_{k\geq 0} \sum\limits_{l=0}^{p-2} \N{p}^{k((p-1)r-ps)}  \N{p}^{l(r-s)} \N{p}^{-2s}. $$
This series is convergent for $\Re(s)>(p-1)r/p$, and meromorphic continuable beyond its abscissa via the rational function
$$ \left(1 - \N{p}^{(p-1)r-ps}\right)^{-1} \ \sum\limits_{l=0}^{p-2} \N{p}^{l(r-s)} \N{p}^{-2s}. $$
The same holds for the Euler factor $\Phi_{r,\mf{p}}(s)$, and its meromorphic continuation has the form
$$ \Phi_{r,\mf{p}}(s) = \left(1 - \N{p}^{(p-1)r-ps}\right)^{-1}  \left(1 - \N{p}^{(p-1)r-ps} + (\N{p}^r - 1) \sum\limits_{l=0}^{p-2} \N{p}^{l(r-s)} \N{p}^{-2s}\right). $$
We hence have verified one proposed factor of $\Phi_{r,\mf{p}}(s)$. Further, the latest term of the right hand side has the identity
\begin{align*} (\N{p}^r - 1) \sum\limits_{l=0}^{p-2} \N{p}^{l(r-s)} \N{p}^{-2s} & = \sum\limits_{l=1}^{p-1} \N{p}^{lr-(l+1)s} - \sum\limits_{l=0}^{p-2} \N{p}^{lr-(l+2)s} \\ & = \N{p}^{(p-1)r-ps} - \N{p}^{-2s} +   \sum\limits_{l=1}^{p-2} \N{p}^{lr-(l+1)s} (1-\N{p}^{-s}).
\end{align*}
Now, extend the latest sum with the index $l=0$ and subtract the corresponding term $\N{p}^{-s} - \N{p}^{-2s}$ to obtain the factorisation of $\Phi_{r,\mf{p}}(s)$ as stated.
\end{proof}

\begin{lemma}\label{prop4-3} The Dirichlet series $\Phi_r(s)$ has the meromorphic continuation $ \Phi_r(s) = \Psi_r(s) \Lambda_r(s) $
for $\Re(s) > \condabsc{p}{r} - \varepsilon$ and some $\varepsilon > 1/(2p)$ with meromorphic factor
$$ \Lambda_r(s) = \prod\limits_{l=2}^p \zeta_F(ls-(l-1)r) $$
and holomorphic factor
$$ \Psi_r(s) = \prod\limits_{\mf{p}} \left( 1 + \N{p}^{-s} \sum\limits_{l=0}^{p-2} \N{p}^{l(r-s)} \right) \prod\limits_{l=0}^{p-2} \left( 1 - \N{p}^{-s}\N{p}^{l(r-s)}  \right). $$
\end{lemma}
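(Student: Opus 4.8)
The plan is to divide the Euler product of $\Phi_r(s)$, prime by prime, by the Euler factors of the candidate meromorphic part $\Lambda_r(s)=\prod_{l=2}^p\zeta_F(ls-(l-1)r)$ and to check that what remains is exactly the claimed $\Psi_r(s)$. Starting from the closed form for $\Phi_{r,\mf{p}}(s)$ in Proposition~\ref{prop4-2}, I would note that its unique pole factor $\bigl(1-\N{p}^{-s}\N{p}^{(p-1)(r-s)}\bigr)^{-1}$ is precisely the $l=p$ Euler factor of $\Lambda_r$, since $-s+(p-1)(r-s)=(p-1)r-ps$. So setting $\Psi_{r,\mf{p}}(s):=\Phi_{r,\mf{p}}(s)\prod_{l=2}^p\bigl(1-\N{p}^{(l-1)r-ls}\bigr)$ kills the pole and leaves the polynomial $\bigl(1+\N{p}^{-s}\sum_{l=0}^{p-2}\N{p}^{l(r-s)}\bigr)\bigl(1-\N{p}^{-s}\bigr)\prod_{l=2}^{p-1}\bigl(1-\N{p}^{(l-1)r-ls}\bigr)$. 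Re-indexing $l\mapsto l-1$ in the last product and absorbing $1-\N{p}^{-s}$ as its $l=0$ term, this collapses to
$$ \Psi_{r,\mf{p}}(s) = \Bigl(1+\N{p}^{-s}\sum_{l=0}^{p-2}\N{p}^{l(r-s)}\Bigr)\prod_{l=0}^{p-2}\bigl(1-\N{p}^{-s}\N{p}^{l(r-s)}\bigr), $$
which is the $\mf{p}$-Euler factor of the asserted $\Psi_r(s)$. Since all three Euler products converge for $\Re(s)>\condabsc{p}{r}$ — the common abscissa of convergence of $\Phi_r$ and of $\Lambda_r$, cf. Proposition~\ref{prop4-2} — the identity $\Phi_r=\Psi_r\Lambda_r$ holds there; and as $\Lambda_r$ is a finite product of zeta functions of $F$, hence rational in $q^{-s}$ and meromorphic on $\CComplex$, it remains only to continue $\Psi_r(s)$ holomorphically past $\Re(s)=\condabsc{p}{r}$.

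For that, I would show that the Euler product $\prod_\mf{p}\Psi_{r,\mf{p}}(s)$ converges absolutely and locally uniformly on $\Re(s)>\condabsc{p}{r}-\varepsilon$ for a suitable $\varepsilon>1/(2p)$. The decisive point is that the linear terms of $\Psi_{r,\mf{p}}(s)$ cancel: writing $a_l=\N{p}^{-s}\N{p}^{l(r-s)}=\N{p}^{lr-(l+1)s}$ for $0\le l\le p-2$ and $A=\sum_l a_l$, one has $\Psi_{r,\mf{p}}(s)=(1+A)\prod_l(1-a_l)=(1+A)(1-A+R)=1+(R-A^2+AR)$, where $R$, $A^2$ and $AR$ are each sums of products of at least two of the $a_l$. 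Hence $\Psi_{r,\mf{p}}(s)-1$ is a sum of boundedly many (depending on $p$ alone) monomials $\pm\N{p}^e$ with $\Re(e)\le 2M(\sigma)$, where $\sigma=\Re(s)$ and $M(\sigma)=\max_{0\le l\le p-2}\bigl(lr-(l+1)\sigma\bigr)$, valid whenever $M(\sigma)\le 0$. As $F$ has $O(q^n)$ primes of degree $n$, the product converges wherever $1+2M(\sigma)<0$. Computing $M(\sigma)=(p-2)r-(p-1)\sigma$ for $\sigma\le r$ and $M(\sigma)=-\sigma$ for $\sigma\ge r$, one finds $M(\sigma)<-1/2$ precisely for $\sigma>\frac{(p-2)r+1/2}{p-1}=\condabsc{p}{r}-\frac{2r+p-2}{2p(p-1)}$, and $\frac{2r+p-2}{2p(p-1)}>\frac1{2p}$ because it reduces to $2r>1$. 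So any $\varepsilon$ with $\frac1{2p}<\varepsilon<\frac{2r+p-2}{2p(p-1)}$ works, and on $\Re(s)>\condabsc{p}{r}-\varepsilon$ the product $\Psi_r(s)$ is holomorphic; together with the meromorphy of $\Lambda_r$ this yields the asserted meromorphic continuation of $\Phi_r$.

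I expect the main obstacle to be this convergence estimate — in particular, recognising that one must divide out all of $\Lambda_r(s)$, not merely the single zeta factor that removes the pole, since it is exactly this that forces the linear terms of each $\Psi_{r,\mf{p}}(s)$ to vanish and so buys the extra half-power of decay needed to push the abscissa of absolute convergence of $\Psi_r$ strictly beyond $\condabsc{p}{r}-1/(2p)$. The algebraic collapse of the Euler factor and the tracking of exponents are otherwise routine bookkeeping.
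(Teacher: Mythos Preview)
Your proof is correct and follows essentially the same route as the paper: you identify $\Psi_{r,\mf{p}}(s)$ from Proposition~\ref{prop4-2} by dividing out the Euler factors of $\Lambda_r$ and re-indexing $l\mapsto l-1$, then establish convergence of $\Psi_r$ by observing that the linear terms of $\Psi_{r,\mf{p}}(s)-1$ cancel and bounding the quadratic and higher terms. Your threshold $\sigma>\frac{(p-2)r+1/2}{p-1}$ and $\varepsilon$-range $\frac{1}{2p}<\varepsilon<\frac{2r+p-2}{2p(p-1)}$ agree exactly with the paper's $\frac{1+2(p-2)r}{2(p-1)}$ and $\frac{p+2(r-1)}{2p(p-1)}$; the only difference is cosmetic, in that the paper tracks the exponents $-ks+(l_1+\cdots+l_k)(r-s)$ directly rather than via your $2M(\sigma)$ bound.
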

\begin{proof} Let $\mf{p}$ be an arbitrary prime of $F$. Then the Euler factor $\Psi_{r,\mf{p}}(s)$ is an entire function, and by Proposition \ref{prop4-2}, we yield the identity of meromorphic functions
$$ \Phi_{r,\mf{p}}(s) \Psi_{r,\mf{p}}(s)^{-1} = \left( 1 - \N{p}^{(p-1)(r-s)-s} \right)^{-1} \prod\limits_{l=1}^{p-2} \left( 1 - \N{p}^{l(r-s)-s}\right)^{-1} = \Lambda_{r,\mf{p}}(s). $$
Observe the substitution $l+1$ for $l$ to get the latter equality. Hence, the Euler factors of $\Phi_r(s)$ and $\Lambda_r(s)\Psi_r(s)$ coincide for all complex arguments $s$. Thus a meromorphic continuation of $\Phi_r(s)$ is given by $\Lambda_r(s)\Psi_r(s)$ for at least in the region of convergence of $\Psi_r(s)$, which is left to be determined. Note the fact on infinite products, that $\prod (1+a_n)$ is absolutely convergent if and only if $\sum a_n$ is absolutely convergent. Furthermore, $\sum \N{p}^{-s}$ is absolutely convergent for $\Re(s)>1$ only. Since the Euler factors of $\Psi_r(s)$ are finite sums of length not depending on the corresponding prime $\mf{p}$, it is sufficient to show, that all $\N{p}$-exponents of $\Psi_{r,\mf{p}}(s)$ are less than $-1$ for real arguments $s>\condabsc{p}{r}-\varepsilon$ and some $\varepsilon>1/(2p)$. There are integers $a(l_1,\ldots,l_k)$, not depending on $\mf{p}$ and zero for almost all $k\geq 1$, such that
$$ \Psi_{r,\mf{p}}(s) = 1 + \sum\limits_{k\geq 1} \  \N{p}^{-ks} \negthickspace \sum\limits_{0\leq l_1, \ldots, l_k \leq p-2} a(l_1,\ldots,l_k) \, \N{p}^{(l_1+\ldots+l_k)(r-s)}.  $$ 
It is easy to see that $a(l)=0$ holds for $0\leq l\leq p-2$. Furthermore, $a(l_1,\ldots,l_k)=0$ holds for $k>p$. Hence, the $\N{p}$-exponents of interest have the form $ -ks + (l_1+\ldots+l_k)(r-s)$ with $2\leq k\leq p$ and $0\leq l_1,\ldots,l_k\leq p-2$. They are less than $-1$ for $s>(1+(l_1+\ldots+l_k)r)/(k+l_1+\ldots+l_k)$. By differentiation for $l_i$, we find these fractions to be maximal for $l_i$ being maximal, and the maximal fraction is given by $(1+k(p-2)r)/(k(p-1))$. In turn, this fraction is maximal for $k$ being minimal. Hence, the $\N{p}$-exponents of $\Psi_{r,\mf{p}}(s)$ are less than $-1$ at least for $ s > (1 + 2(p-2)r)/(2(p-1))$. The distance of this abscissa to $\condabsc{p}{r}$ is given by
$$ \condabsc{p}{r} - \frac{1+2(p-2)r}{2(p-1)} = \frac{1+(p-1)r}{p} - \frac{1+2(p-2)r}{2(p-1)}= \frac{p + 2(r-1)}{2p(p-1)}. $$ 
Hence, we can choose $\varepsilon$ in the range 
$$ \frac{1}{2p} <\varepsilon< \frac{1}{2(p-1)} + \frac{r-1}{p(p-1)} = \frac{p + 2(r-1)}{2p(p-1)}, $$
as desired.
\end{proof}

\begin{cor}\label{prop4-5} The function $\Psi_r(s)$ has the value
$$ \Psi_r(\condabsc{p}{r}) = \prod\limits_{\mf{p}} \left( 1 + \N{p}^{-r} \sum\limits_{l=1}^{p-1} \N{p}^{l(r-1)/p} \right) \prod\limits_{l=1}^{p-1} \left( 1 - \N{p}^{-r}\N{p}^{l(r-1)/p}  \right). $$
\end{cor}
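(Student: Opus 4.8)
The plan is to substitute $s = \condabsc{p}{r}$ directly into the Euler product representation of $\Psi_r(s)$ given in Lemma~\ref{prop4-3}. First I would observe that this substitution is legitimate: by Lemma~\ref{prop4-3} the function $\Psi_r(s)$ is holomorphic, and in particular its defining infinite product converges absolutely, for $\Re(s) > \condabsc{p}{r} - \varepsilon$ with some $\varepsilon > 1/(2p) > 0$. Hence $s = \condabsc{p}{r}$ lies inside the region of convergence, and $\Psi_r(\condabsc{p}{r})$ is obtained simply by evaluating each Euler factor termwise at that point.

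Next I would carry out the (entirely elementary) exponent bookkeeping. Writing $s = \condabsc{p}{r} = (1+(p-1)r)/p$, a one-line computation gives $r - s = (r-1)/p$, and therefore $-s = -r + (r-s) = -r + (r-1)/p$. Consequently, for $0 \leq l \leq p-2$,
$$ -s + l(r-s) = -r + (l+1)\,\frac{r-1}{p}, $$
so that $\N{p}^{-s}\N{p}^{l(r-s)} = \N{p}^{-r}\,\N{p}^{(l+1)(r-1)/p}$ for every prime $\mf{p}$ of $F$.

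Finally I would reindex by $l \mapsto l+1$. This turns $\sum_{l=0}^{p-2}\N{p}^{-s}\N{p}^{l(r-s)}$ into $\N{p}^{-r}\sum_{l=1}^{p-1}\N{p}^{l(r-1)/p}$, and it turns $\prod_{l=0}^{p-2}(1 - \N{p}^{-s}\N{p}^{l(r-s)})$ into $\prod_{l=1}^{p-1}(1 - \N{p}^{-r}\N{p}^{l(r-1)/p})$. Inserting both rewritten pieces into the formula of Lemma~\ref{prop4-3} and taking the product over all primes $\mf{p}$ of $F$ yields precisely the asserted expression for $\Psi_r(\condabsc{p}{r})$. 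There is no genuine obstacle here, since the corollary is a direct specialization of Lemma~\ref{prop4-3}; the only points requiring a word of care are the reindexing $l \mapsto l+1$ and the remark that evaluation at $s = \condabsc{p}{r}$ commutes with the infinite product, which is guaranteed by the holomorphy range established in Lemma~\ref{prop4-3}.
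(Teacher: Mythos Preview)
Your proof is correct and follows essentially the same route as the paper: both compute the exponent $-s + l(r-s) = -r + (l+1)(r-1)/p$ at $s = \condabsc{p}{r}$ and then reindex $l \mapsto l+1$. Your additional remark justifying the termwise evaluation via the convergence range of Lemma~\ref{prop4-3} is a welcome clarification that the paper leaves implicit.
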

\begin{proof} For $s=\condabsc{p}{r}$, the term $\N{p}^{-s}\N{p}^{l(r-s)}$ of the Euler factor $\Psi_{r,\mf{p}}(s)$ is a $\N{p}$-power with exponent
$$
 -  \frac{1+(p-1)r}{p} + l \left(  r - \frac{1+(p-1)r}{p} \right)  = \frac{r-1-pr}{p}  + \frac{lpr - l - l(p-1)r}{p} = -r + \frac{(l+1)(r-1)}{p}.
$$
This leads to the proposed value of $\Psi_r(s)$ by the substitution $l+1$ for $l$.  
\end{proof}

\begin{cor}\label{prop4-4} For $p=2$, we have
$$ \Phi_r(s) = \zeta_F(2s-r) \, \zeta_F(2s)^{-1}. $$
\end{cor}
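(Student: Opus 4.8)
The plan is simply to specialise the factorisation $\Phi_r(s)=\Psi_r(s)\Lambda_r(s)$ from Lemma~\ref{prop4-3} to the case $p=2$. In the product $\Lambda_r(s)=\prod_{l=2}^p\zeta_F(ls-(l-1)r)$ only the index $l=2$ survives, so $\Lambda_r(s)=\zeta_F(2s-r)$. It therefore remains to identify $\Psi_r(s)$ with $\zeta_F(2s)^{-1}$.

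For $p=2$ the inner sum $\sum_{l=0}^{p-2}$ and the inner product $\prod_{l=0}^{p-2}$ appearing in the definition of $\Psi_r(s)$ both consist of the single index $l=0$. Hence the Euler factor collapses to $\Psi_{r,\mf{p}}(s)=(1+\N{p}^{-s})(1-\N{p}^{-s})=1-\N{p}^{-2s}$, and multiplying over all primes $\mf{p}$ of $F$ gives $\Psi_r(s)=\prod_{\mf{p}}(1-\N{p}^{-2s})=\zeta_F(2s)^{-1}$. Combining the two computations yields $\Phi_r(s)=\zeta_F(2s-r)\,\zeta_F(2s)^{-1}$, as claimed.

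Alternatively, and perhaps even more transparently, one can bypass $\Psi_r$ and $\Lambda_r$ entirely and read the result straight off Proposition~\ref{prop4-2}: for $p=2$ the stated formula for $\Phi_{r,\mf{p}}(s)$ becomes $\bigl(1-\N{p}^{r-2s}\bigr)^{-1}\bigl(1+\N{p}^{-s}\bigr)\bigl(1-\N{p}^{-s}\bigr)=\bigl(1-\N{p}^{r-2s}\bigr)^{-1}\bigl(1-\N{p}^{-2s}\bigr)$, and taking the Euler product over $\mf{p}$ gives $\zeta_F(2s-r)\,\zeta_F(2s)^{-1}$ directly.

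There is essentially no obstacle here: the statement is a pure specialisation of results already established, and the only thing to be careful about is the bookkeeping of the empty/singleton index ranges when $p=2$, together with noting that the convergence/continuation claims are inherited verbatim from Lemma~\ref{prop4-3} (or from the absolute convergence of the Euler product for $\zeta_F(2s)$ on $\Re(s)>1/2$ and of $\zeta_F(2s-r)$ on $\Re(s)>(r+1)/2=\condabsc{2}{r}$).
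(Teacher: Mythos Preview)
Your proposal is correct and follows essentially the same approach as the paper: the paper's proof simply says ``We easily verify $\Lambda_r(s)=\zeta_F(2s-r)$ and $\Psi_r(s)=\zeta_F(2s)^{-1}$ by definition,'' which is exactly your first argument spelled out in full. Your alternative via Proposition~\ref{prop4-2} is equally valid but not genuinely different, since Lemma~\ref{prop4-3} is itself obtained from that proposition.
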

\begin{proof} We easily verify $\Lambda_r(s)=\zeta_F(2s-r)$ and $\Psi_r(s)=\zeta_F(2s)^{-1}$ by definition.
\end{proof}

\begin{prop}\label{prop4-6} The function $\Lambda_r(s)$ has the following properties.
\begin{itemize}
 \item[\textup{(a)}] Let $r=1$. Then $\condabsc{p}{}=1$, and $\Lambda_1(s)$ is holomorphic for $\Re(s)>1-1/p$ except for the set of poles
$$  \left\{ 1 + \frac{2 \pi i}{\log(q)}\, \frac{j}{\ell} : 0 \leq j < \ell \right\} + \frac{2\pi i}{\log(q)}\, \ZZ , $$ 
where $\ell$ is the least common multiple of $2,\ldots,p$. The periodic poles corresponding to $s=1$ is of maximal order $\condb{F,\cyclic{p}}=p-1$. In case $p\neq 2$, every other pole has less order. 
 \item[\textup{(b)}] Let $r>1$. Then $\Lambda_r(s)$ is holomorphic for $\Re(s)>\condabsc{p}{r}-\min\{1/p,(r-1)/(p^2-p)\} $ except for the set of poles
$$  \left\{ \condabsc{p}{r} + \frac{2 \pi i}{\log(q)}\, \frac{j}{p} : 0 \leq j < p \right\} + \frac{2\pi i}{\log(q)}\, \ZZ . $$
Every pole has order $\condb{F,\cyclic{p}^r}=1$.
\end{itemize} 
\end{prop}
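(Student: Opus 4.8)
The plan is to reduce everything to the known analytic structure of $\zeta_F$. By the Hasse--Weil theorem, $\zeta_F(s)$ is a rational function of $t=q^{-s}$ of the form $L_F(t)/((1-t)(1-qt))$ with $L_F\in\ZZ[t]$, $L_F(0)=1$, $\deg L_F=2g_F$, whose zeros all satisfy $|t|=q^{-1/2}$; hence $\zeta_F$ has simple poles precisely at $t\in\{1,q^{-1}\}$, that is at $s\equiv 0$ and $s\equiv 1$ modulo $\tfrac{2\pi i}{\log(q)}$, and no other poles. Consequently the $l$-th factor $\zeta_F(ls-(l-1)r)$ of $\Lambda_r(s)=\prod_{l=2}^{p}\zeta_F(ls-(l-1)r)$ has simple poles exactly on the two vertical lines
$$ \Re(s)=\frac{(l-1)r}{l}\qquad\text{and}\qquad\Re(s)=\frac{(l-1)r+1}{l}, $$
namely at $s\equiv\tfrac{(l-1)r}{l}$ resp.\ $s\equiv\tfrac{(l-1)r+1}{l}$ modulo $\tfrac{2\pi i}{l\log(q)}$, whereas its zeros all lie on the strictly intermediate line $\Re(s)=\tfrac{(l-1)r}{l}+\tfrac1{2l}$. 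In particular no zero of any factor can meet a pole of $\Lambda_r$, so the order of the pole of $\Lambda_r$ at a point $s_0$ is exactly the number of indices $l\in\{2,\dots,p\}$ for which $s_0$ lies on one of the two lines of the $l$-th factor.

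First I would locate the right-most line of poles by comparing the above fractions with $\condabsc{p}{r}=\tfrac{1+(p-1)r}{p}$. A direct computation gives
$$ \frac{(l-1)r+1}{l}-\condabsc{p}{r}=-\frac{(p-l)(r-1)}{pl},\qquad \frac{(l-1)r}{l}-\condabsc{p}{r}=-\frac{l+(p-l)r}{pl}. $$
Thus the line $\Re(s)=\tfrac{(l-1)r+1}{l}$ coincides with $\Re(s)=\condabsc{p}{r}$ exactly when $l=p$ or $r=1$ and lies strictly to its left otherwise, while $\Re(s)=\tfrac{(l-1)r}{l}$ always lies strictly to the left. Hence, for $r>1$, only the factor $l=p$ contributes poles on $\Re(s)=\condabsc{p}{r}$, namely simple ones at $s=\condabsc{p}{r}+\tfrac{2\pi i}{\log(q)}\tfrac jp$ with $0\le j<p$ modulo $\tfrac{2\pi i}{\log(q)}\ZZ$, which gives the pole set and the order $\condb{F,\cyclic{p}^r}=1$ claimed in part~(b). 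For $r=1$ every factor $l=2,\dots,p$ contributes, its poles on $\Re(s)=1=\condabsc{p}{}$ lying at $s=1+\tfrac{2\pi i}{\log(q)}\tfrac kl$, $k\in\ZZ$; since $l\mid\ell:=\mathrm{lcm}(2,\dots,p)$, all of these lie in $1+\tfrac{2\pi i}{\log(q)}\tfrac1\ell\ZZ$, which is the pole set of part~(a).

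Next I would read off the pole orders in part~(a). At $s=1+\tfrac{2\pi i}{\log(q)}\beta$ the $l$-th factor has a pole precisely when $l\beta\in\ZZ$, that is when the reduced denominator $d$ of $\beta$ divides $l$. For $\beta\equiv0$ this holds for all $p-1$ indices, so $s=1$ is a pole of order $\condb{F,\cyclic{p}}=p-1$; for $\beta\not\equiv0$ one has $d\ge2$, and then either $d\neq p$, in which case the index $l=p$ is excluded since $d\nmid p$, or $d=p$, in which case -- using $p\ge3$ -- the index $l=2$ is excluded, leaving strictly fewer than $p-1$ contributing factors in both cases, which proves the last two sentences of~(a); for $p=2$ there is nothing to show, as $\{2,\dots,p\}=\{2\}$ and every pole is simple. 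Finally I would compute the width of the pole-free strip: since $\tfrac{l+(p-l)r}{pl}=\tfrac{1-r}{p}+\tfrac rl$ and $\tfrac{(p-l)(r-1)}{pl}=\tfrac{r-1}{l}-\tfrac{r-1}{p}$ are both decreasing in $l$, among all lines $\Re(s)=\tfrac{(l-1)r}{l}$ and (for $r>1$) $\Re(s)=\tfrac{(l-1)r+1}{l}$ that lie strictly left of $\condabsc{p}{r}$, the right-most are the one for $l=p$, at distance $\tfrac1p$, and the one for $l=p-1$, at distance $\tfrac{r-1}{p^2-p}$. This yields holomorphy of $\Lambda_r$ for $\Re(s)>\condabsc{p}{r}-\min\{1/p,(r-1)/(p^2-p)\}$ when $r>1$, and for $\Re(s)>1-1/p$ when $r=1$ (where the lines $\Re(s)=\tfrac{(l-1)r+1}{l}$ have all merged into $\Re(s)=1$ and the binding left line is $\Re(s)=(p-1)/p$, coming from $l=p$), which completes both parts.

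I expect the only delicate step to be the order bookkeeping in part~(a): the elementary divisibility argument that isolates $s=1$ as the strictly dominant pole when $p\neq2$. Everything else is a routine comparison of linear fractions in $l,r,p$ on top of the Hasse--Weil description of $\zeta_F$; in particular the only pole line inside the claimed half-plane of holomorphy is $\Re(s)=\condabsc{p}{r}$, and a short check shows that no zero of a factor lies on it, so the pole orders are exactly as stated with no cancellation against zeros.
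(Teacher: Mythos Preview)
Your proof is correct and follows essentially the same route as the paper's: compute the distances $\condabsc{p}{r}-\tfrac{(l-1)r}{l}$ and $\condabsc{p}{r}-\tfrac{1+(l-1)r}{l}$, use their monotonicity in $l$ to locate the rightmost pole line and the width of the pole-free strip, and then argue that among the poles on $\Re(s)=1$ only those in $1+\tfrac{2\pi i}{\log q}\ZZ$ are hit by every factor. The paper phrases the last step as ``$2\pi i/\log(q)$ is the meet of all periods $2\pi i/(l\log q)$'', which is exactly your divisibility criterion $d\mid l$ in different language. Your explicit check that no zero of one factor can cancel a pole of another on $\Re(s)=\condabsc{p}{r}$ is a point the paper leaves implicit, so your write-up is in fact slightly more careful there.
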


\begin{proof} Since the poles of $\zeta_F(s)$ are given by the values of $s$ with $q^s = 1$ and $q^s=q$ respectively, we see, that the poles of $\Lambda_r(s)$ are given by the values of $s$ with $q^{ls} = q^{(l-1)r}$ and  $q^{ls} = q^{1+(l-1)r}$for $l=2,\ldots,p$ respectively. Each of these poles has period $2 \pi i/(\log(q)\, l)$. We can ignore the former poles, as they are out of the region of interest due to
$$ \condabsc{p}{r} - \frac{(l-1)r}{l} = \frac{l + (p-l)r}{pl} \geq \frac{1}{p}. $$
For the latter poles, we obtain
$$ \condabsc{p}{r} - \frac{1+(l-1)r}{l} = \frac{(p-l)(r-1)}{lp}. $$ 
This distance is minimal for $l$ being maximal, whence assertion~(b) follows. For assertion~(a), we have $r=1$, and every pole of $\Lambda_1(s)$ in the region $\Re(s)>1-1/p$ lies on the axis $\Re(s)=1$. Since the poles of $\zeta_F(s)$ are simple, the order of any pole of $\Lambda_1(s)$ cannot exceed $p-1$. Since $2\pi i/\log(q)$ is the meet of all periods $2\pi i/(\log(q)\, l)$, only the poles $s\in 1+2 \pi i/\log(q)\, \ZZ$ have maximal order $p-1$, if $p\neq 2$.  
\end{proof}

Now, Theorem~\ref{THEOREMdirichlet} is an easy consequence of the results above. 

\begin{proof}[\textbf{Proof of Theorem~\ref{THEOREMdirichlet}}] By Proposition~\ref{prop4-6}, we only have to show, that $\Psi(s) = \Phi(F,\cyclic{p}^r;s) \Lambda_r(s)^{-1}$ is convergent for $\Re(s)> a$ with $a=\condabsc{p}{r}-1/(2p)$. We have the formal equality
$$ \Psi(s) = e_r \Psi_r(s) + \sum\limits_{i=1}^{r-1} e_i\Phi_i(s)\Lambda_r(s)^{-1} + \Upsilon_r(s)\Lambda_r(s)^{-1} $$
in the region of convergence, and we shall see that the right hand side has convergence abscissa $a$. By Lemma~\ref{prop4-3}, $\Psi_r(s)$ is convergent for $\Re(s)>a$, and so is $\Upsilon_r(s)$ by Lemma~\ref{prop4-1}. Further, $\Phi_i(s)$ has convergence abscissa $\condabsc{p}{i}$, which do not exceed $a$ for $i<r$ due to 
$$ \condabsc{p}{r} - \condabsc{p}{i} = \frac{(p-1)(r-i)}{p} > \frac{1}{2p}. $$  
The zeros of $\Lambda_r(s)$ are located of the imaginary axes $\Re(s)=(1+2(l-1)r)/2l$ for $l=2,\ldots,p$, by the theorem of Hasse-Weil. These fractions are maximal for $l$ being maximal, whence $\Lambda_r(s)^{-1}$ has convergence abscissa $(1+2(p-1)r)/2p=a$. Therefore, $\Psi(s)$ has abscissa $a$, too.
\end{proof}

\section{Distribution of conductors of Artin-Schreier extensions}

This section deals with the proofs of Theorem~\ref{THEOREMasymptoticCond}, Addendum~\ref{THEOREMp2}, and Addendum~\ref{THEOREMr1}. We consider the Dirichlet series
$$ \Phi(F,\cyclic{p}^r;s) = \sum\limits_{\Gal{E}{F} \iso \cyclic{p}^r} \Norm{\mf{f}(E/F)}^{-s} = \sum\limits_{n\geq 0} c_nt^n $$ 
as power series in $t=q^{-s}$ with coefficients $c_n$. Then the counting function $C(F,\cyclic{p}^r;X)$ coincides with coefficients sum of this series via
$$ C(F,\cyclic{p}^r;q^m) = \sum\limits_{n=0}^m c_n.  $$

\begin{proof}[\textbf{Proof of Theorem~\ref{THEOREMasymptoticCond}}] 
By Theorem~\ref{THEOREMdirichlet}, the power series expansion of $\Phi(F,\cyclic{p}^r;s)$ in $t=q^{-s}$ has radius of convergence $R=q^{-\condabsc{p}{r}}$, and it is meromorphic continuable beyond its circle of convergence. By Proposition~\ref{prop4-6}, the poles of the continuation are of the form $t=R\xi^{-j}$ for some root of unity $\xi$. We now obtain Theorem~\ref{THEOREMasymptoticCond} from Theorem~\ref{A5}. The constant $\condc{F,\cyclic{p}^r}$ is hereby a positive real number, since the coefficients $c_n$ are nonnegative integers. 
\end{proof}

Formulas for $\condc{F,\cyclic{p}^r}$ can be obtained by Theorem~\ref{A5}. If we have more than one pole on the circle of convergence with maximal order, we have to use assertion~(b), which involves some tedious calculations. We shall compute $\condc{F,\cyclic{p}^r}$ for $p=2$ or $r=1$ only, as we have only one or two poles of interest in these cases. It is clear, that $C(F,\cyclic{p}^r;X)$ only depends on the coefficient sum of $e_r\Phi_r(s)$. Let $f(t)=e_r\Phi_r(s)$, $g(t)=e_r\Psi_r(s)$, $h(t)=\Lambda_r(s)$, and $Z_F(t)=\zeta_F(s)$ be the corresponding power series expansions of the respective Dirichlet series. Further, let $L_F(t)=(1-t)(1-qt)Z_F(t)$ be the $L$-polynomial of $F$.
Then we have $f(t)=g(t)h(t)$ and
$$ h(t) = \prod\limits_{l=2}^p \frac{L_F(q^{(l-1)r}t^l)}{(1-q^{(l-1)r}t^l)\, (1-q^{1+(l-1)r}t^l)}. $$
Before we start with the calculations of $\condc{F,\cyclic{p}^r}$, the following identities are provided.  

\begin{prop} \label{prop7-1}
\begin{itemize}
\item[\textup{(a)}]  We have
$$ e_r = \prod\limits_{i=0}^{r-1} \frac{p}{p^r-p^i} = \frac{|\cyclic{p}^r|}{|\Aut{\cyclic{p}^r}|}. $$ 
\item[\textup{(b)}]  We have
$$ \frac{L_F(q^{-1})}{1-q^{-1}} = \log(q)\, \zeta_F(1). $$ 
\end{itemize}

\end{prop}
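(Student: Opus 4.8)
The plan is to prove the two identities separately; both amount to bookkeeping, with no serious analytic content.

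For part (a), I would read off $e_r$ directly from the definition of $e(X)=\prod_{i=0}^{r-1}(pX-p^i)/(p^r-p^i)$ given in Proposition~\ref{prop3-1}. This is a polynomial of degree $r$, and each of the $r$ factors contributes the term $pX$ to the top degree, so the coefficient of $X^r$ is $\prod_{i=0}^{r-1} p/(p^r-p^i)$, which is the first claimed equality. For the second equality I would invoke the standard identification $\Aut{\cyclic{p}^r}\iso\textup{GL}_r(\ffield{p})$, so that $|\Aut{\cyclic{p}^r}|=\prod_{i=0}^{r-1}(p^r-p^i)$, together with $|\cyclic{p}^r|=p^r=\prod_{i=0}^{r-1}p$. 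Distributing the numerator $p^r$ over the product in the denominator then gives $\prod_{i=0}^{r-1} p/(p^r-p^i)=|\cyclic{p}^r|/|\Aut{\cyclic{p}^r}|$, as required.

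For part (b), I would start from $L_F(t)=(1-t)(1-qt)Z_F(t)$, that is $Z_F(t)=L_F(t)/\big((1-t)(1-qt)\big)$, and substitute $t=q^{-s}$ to get $\zeta_F(s)=Z_F(q^{-s})=L_F(q^{-s})/\big((1-q^{-s})(1-q^{1-s})\big)$. Near $s=1$ the factor $1-q^{-s}$ equals $1-q^{-1}\neq 0$, while $1-q^{1-s}$ vanishes, with Taylor expansion $1-q^{1-s}=(s-1)\log(q)+\grossO{(s-1)^2}$. Hence the pole of $\zeta_F$ at $s=1$ is simple and
$$ \zeta_F(1)=\Res_{s=1}\zeta_F(s)=\lim_{s\to 1}(s-1)\,\frac{L_F(q^{-s})}{(1-q^{-s})(1-q^{1-s})}=\frac{1}{\log(q)}\cdot\frac{L_F(q^{-1})}{1-q^{-1}}, $$
and rearranging this equation yields $L_F(q^{-1})/(1-q^{-1})=\log(q)\,\zeta_F(1)$.

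There is no real obstacle here; the only point requiring a little care is the change of variables between $s$ and $t=q^{-s}$ and the corresponding normalisation of the residue, which is handled by the expansion $1-q^{1-s}=(s-1)\log(q)+\grossO{(s-1)^2}$ above.
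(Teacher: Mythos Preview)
Your proof is correct and follows essentially the same approach as the paper's: the paper simply remarks that the first equality in (a) is the definition of $e_r$ from Proposition~\ref{prop3-1} and the second is easy, and for (b) it cites the limit $\lim_{s\to 1}(s-1)/(1-q^{1-s})=1/\log(q)$, which is exactly the Taylor expansion you wrote out. You have merely supplied the details the paper omits.
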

\begin{proof} The first equation in assertion~(a) is just the definition of $e_r$ as given in Proposition~\ref{prop3-1}, and the second one is easy to verify. Assertion (b) follows from $\lim\nolimits_{s\rightarrow 1} (s-1)/(1-q^{1-s}) = 1/\log(q)$. 
\end{proof}

\begin{proof}[\textbf{Proof of Addendum~\ref{THEOREMr1}}] Let $p\neq 2$ and $r=1$. By Proposition~\ref{prop4-6}, $f(t)=g(t)h(t)$ has only one pole on the circle of convergence $|t|=R=q^{-1}$ of maximal order $b=p-1$. Let $c=\lim\nolimits_{t\rightarrow q^{-1}}(t-q^{-1})^{p-1}h(t)$ be the limit of $h(t)$ at this pole. Then we obtain 
$$ \condc{F,\cyclic{p}}  = - \frac{q^{p-1}\, g(q^{-1})\, c}{(p-2)!\, (1-q^{-1})\, \log(q^{-1})^{p-2}} = \frac{q^{p-1}\, g(q^{-1})\, c}{(p-2)!\, (1-q^{-1})\, \log(q)^{p-2}} $$
by Theorem~\ref{A5} (c). Observe that the sign of this constant is positive, since $p$ is an odd prime. Using Corollary~\ref{prop4-5} and Proposition~\ref{prop7-1} (a) helps to identify the corresponding factors of $g(q^{-1})$ in the proposed formula. Now, we only have to verify
$$ c = \lim\limits_{t\rightarrow q^{-1}} (t-q^{-1})^{p-1}\, h(t) = \left(\frac{\log(q)\, \zeta_F(1)}{q}\right)^{p-1}\, \frac{1}{p!}.  $$ 
We obtain the factor $(\log(q)\, \zeta_F(1))^{p-1}$ from the limit of $\prod\nolimits_{l=2}^p Z_F(q^{l-1}t^l)/(1-q^{l-1}t^l)$ for $t\rightarrow q^{-1}$ via assertion~(b) of Proposition~\ref{prop7-1}. Finally, we have
$$ \lim\limits_{t\rightarrow q^{-1}} (t-q^{-1})^{p-1}\, \prod\limits_{l=2}^p \frac{1}{1-(qt)^l} = \lim\limits_{t\rightarrow q^{-1}} \left(\frac{qt-1}{1-qt} \right)^{p-1} \prod\limits_{l=2}^p \frac{1}{q\, \sum\nolimits_{k=0}^{l-1} (qt)^k} = \frac{1}{q^{p-1}\, p!}. $$ 
Putting all together, we obtain the formula for $\condc{F,\cyclic{p}}$ as stated.
\end{proof}

\begin{proof}[\textbf{Proof of Addendum~\ref{THEOREMp2}}] Let $p=2$ and $r\geq 1$. Then $f(t)=e_r Z_F(q^rt^2)/Z_F(t^2)$ by Corollary~\ref{prop4-4}. We have two simple poles on the circle of convergence $|t|=R=q^{-(1+r)/2}$, namely $t=\pm R$. A priori, we have two relevant asymptotic equivalence classes for $C(F,\cyclic{2}^r;X)$ by Theorem~\ref{A5} (b), namely one for $X=q^{m}$ with $m$ running over even integers, and the one for $m$ running over odd integers. Note that we only have to prove, that one of these equivalence classes coincides with the proposed one, by the redefinition of asymptotic equivalence as declared at the beginning of the article. By Theorem \ref{A5}, we have
$$ \gamma_e = \lim\limits_{n\rightarrow\infty} \frac{C(F,\cyclic{2}^r;q^{2n+e})}{R^{-(2n+e)}} = - \left( \frac{R^{-1}\, \Res_{t=R}(f(t))}{1-R} + \frac{(-R)^{-1}\, \Res_{t=-R}(f(t))}{1+R}\, (-1)^e \right) $$ 
for $e=0,1$. The residues for $t=\pm R$ only differ in their sign, and we obtain
$$ \Res_{t=\pm R}(f(t)) =  \frac{e_r\, \log(q)\, \zeta_F(1)}{\zeta_F(r+1)\, 2(\pm R)^{-1}}. $$
Now, the constant $\gamma_e$ differs from the proposed formula of $\condc{F,\cyclic{2}^r}$ by the factor $((1+R) + (-1)^e (1-R))/2$, and we obtain 
$\condc{F,\cyclic{2}^{r}} = \gamma_0 = \gamma_1/R$. Hence, the proposed equivalence class for $C(F,\cyclic{2}^r;X)$ is true for $X=q^{2n}$ as $n\rightarrow\infty$. 
\end{proof}

\begin{appendix}
\section{Tauberian theorem for power series} 

In this section, we seek accurate asymptotic formulas for the coefficients and coefficient sums of a power series $f(t) = \sum\nolimits_{n\geq 0} c_nt^n$. We will follow the lead of Michael Rosen's proof for Theorem 17.4 on page 311 in \cite{RosenNTFF}, which shall be generalised with part (a) of Lemma~\ref{A1} and Theorem~\ref{A5} for the sake of completeness. For the purpose of the article at hand, we are more interested in the coefficient sums, especially the parts (b) and (c) in latter-mentioned theorem. As a general assumption for this appendix, let $R$ be the radius of convergence with $0<R<1$, and let $f(t)$ be holomorphic continuable in $|t|\leq R + \varepsilon$ for some $\varepsilon>0$ with exception for a discrete pole set $P$, which is contained in the circle of convergence $|t|=R$. Further, let $p_u((t-u)^{-1}) = \sum\nolimits_{k=1}^b p_{u,k}(t-u)^{-k}$ be the principal part of the Laurent series of $f(t)$ for $t=u$, and let $p_{u,b}\neq 0$ for at least one pole $u\in P$.

\begin{prop}\label{A2} Let $g_n(t)=1/t^{n+1}$ and $h_m(t) = \sum\nolimits_{n=0}^m g_n(t)$, and let $g_n^{(l)}(t)$, $h_m^{(l)}(t)$ denote their respective $l$-th derivative. Then the following holds.
\begin{itemize}
 \item[\textup{(a)}] For any integer $n\geq 0$, we have 
$$ \frac{g_n^{(l)}(t)}{l!} = - (-t)^{-(l+1)} \binom{n+l}{l}  t^{-n}. $$
 \item[\textup{(b)}] For any integer $m\geq 0$,  we have
$$ \frac{h_m^{(l)}(t)}{l!} =  - (-t)^{-(l+1)}  \sum\limits_{n=0}^m \binom{n+l}{l}  t^{-n}. $$   
\end{itemize}
\end{prop}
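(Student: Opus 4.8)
The plan is to prove part (a) by induction on the order $l$ of the derivative, and then to obtain part (b) for free by summing the formula of (a) over $n$, using that differentiation is linear and $h_m = \sum_{n=0}^m g_n$.

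For part (a), in the base case $l=0$ both sides reduce to $t^{-(n+1)}$, since $-(-t)^{-1} = t^{-1}$ and $\binom{n}{0}=1$. For the inductive step I would first record the elementary sign identity $-(-t)^{-(l+1)} = (-1)^l t^{-(l+1)}$, which lets me rewrite the claimed formula in the cleaner form $g_n^{(l)}(t) = l!\,(-1)^l\binom{n+l}{l}\,t^{-(n+l+1)}$. Differentiating this once more, the power rule contributes a factor $-(n+l+1)$, and the factorial rearrangement $(n+l+1)\binom{n+l}{l} = (l+1)\binom{n+l+1}{l+1}$ (both sides equal $\frac{(n+l+1)!}{l!\,n!}$) turns the result into $(l+1)!\,(-1)^{l+1}\binom{n+l+1}{l+1}\,t^{-(n+l+2)}$, which is exactly the asserted formula for $l+1$ once one translates back through the sign identity. (Alternatively, one can skip the induction and apply the power rule $l$ times directly, collecting the falling factorial $(-(n+1))(-(n+2))\cdots(-(n+l)) = (-1)^l\,(n+l)!/n!$ and dividing by $l!$; I would use whichever is shorter in the final write-up.)

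For part (b), since $h_m(t) = \sum_{n=0}^m g_n(t)$ is a finite sum, differentiation commutes with it, so $h_m^{(l)}(t) = \sum_{n=0}^m g_n^{(l)}(t)$; dividing by $l!$ and inserting part (a) termwise, the common factor $-(-t)^{-(l+1)}$ factors out of the sum and leaves $-(-t)^{-(l+1)}\sum_{n=0}^m \binom{n+l}{l}\,t^{-n}$, as claimed.

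I do not expect a genuine obstacle here: the statement is a routine differentiation identity, and the only thing requiring care is the bookkeeping — keeping straight the interplay between $(-t)^{-(l+1)}$ and $(-1)^l t^{-(l+1)}$, and the binomial/factorial manipulation in the inductive step. The role of the proposition in the paper is purely preparatory (it feeds the Cauchy-integral extraction of coefficient sums in Theorem~\ref{A5}), so a short, self-contained computation is all that is needed.
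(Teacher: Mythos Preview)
Your proposal is correct and matches the paper's own proof in both structure and content. The paper's argument is the one-line version you list as your alternative: it states $g_n^{(l)}(t) = (-1)^l t^{-(n+l+1)}\prod_{i=1}^l (n+i)$ directly from the power rule and then obtains (b) by summing over $n$, so your induction is a slightly more verbose route to the same identity.
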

\begin{proof} Assertion (a) follows from $ g_n^{(l)}(t) = (-1)^lt^{-(n+l+1)} \prod\nolimits_{i=1}^l (n+i). $
By adding from $n=0$ up to $n=m$, we hence obtain assertion~(b).
\end{proof}

\begin{lemma}\label{A1} Let $f(t)=\sum\nolimits_{n\geq 0} c_nt^n$ be a power series with properties as given above.
\begin{itemize}
 \item[\textup{(a)}] The coefficient $c_n$ fulfils the asymptotic relation
$$ \left|\, c_n \, - \, \sum\limits_{u\in P} \sum\limits_{k=1}^b (-u)^{-k}\, p_{u,k} \, \binom{n+k-1}{k-1}\, u^{-n} \, \right| \in \kleino{R^{-n}}. $$
 \item[\textup{(b)}] The coefficient sum $\sum\nolimits_{n=0}^m c_n$ fulfils the asymptotic relation
$$ \left|\, \sum\limits_{n=0}^m c_n \, - \, \sum\limits_{u\in P}  \sum\limits_{k=1}^b (-u)^{-k}\, p_{u,k} \, \sum\limits_{n=0}^m \binom{n+k-1}{k-1}\, u^{-n} \, \right| \in \kleino{R^{-m}}. $$
\end{itemize}
\end{lemma}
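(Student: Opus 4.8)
The plan is to derive both assertions simultaneously from Cauchy's integral formula by pushing the contour of integration slightly past the circle of convergence. For assertion~(a), I would start from
$$ c_n = \frac{1}{2\pi i} \oint_{|t|=R} \frac{f(t)}{t^{n+1}}\, dt = \frac{1}{2\pi i} \oint_{|t|=R} f(t)\, g_n(t)\, dt, $$
and for assertion~(b) I would sum this over $n=0,\ldots,m$ to get $\sum_{n=0}^m c_n = \frac{1}{2\pi i}\oint_{|t|=R} f(t)\, h_m(t)\, dt$, with $g_n$, $h_m$ as in Proposition~\ref{A2}. Since $f(t)$ is holomorphic in $|t|\leq R+\varepsilon$ except for the finite pole set $P\subset\{|t|=R\}$, I would deform the contour outward to $|t|=R+\varepsilon$, picking up the residues of $f(t)g_n(t)$ (resp. $f(t)h_m(t)$) at each $u\in P$. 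The residue at $u$ is computed from the principal part $p_u((t-u)^{-1})=\sum_{k=1}^b p_{u,k}(t-u)^{-k}$ of $f$ together with the Taylor expansion of the entire function $g_n$ (resp. $h_m$) at $u$: the residue of $f(t)g_n(t)$ at $u$ equals $\sum_{k=1}^b p_{u,k}\, \frac{g_n^{(k-1)}(u)}{(k-1)!}$, and by Proposition~\ref{A2}(a) this is exactly $-\sum_{k=1}^b p_{u,k}(-u)^{-k}\binom{n+k-1}{k-1} u^{-n}$. The sign works out so that the main term emerges with the stated sign after accounting for the orientation of the residue contribution.

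The remaining task is to bound the error term, i.e. the integral over the outer contour $|t|=R+\varepsilon$. On this circle $f(t)$ is bounded, say by some constant $M$, and $|g_n(t)|=(R+\varepsilon)^{-(n+1)}$, so the integral over $|t|=R+\varepsilon$ is $O\big((R+\varepsilon)^{-n}\big)$, which is $o(R^{-n})$ since $R+\varepsilon>R$; this gives (a). For (b), on the same circle $|h_m(t)| = \big|\sum_{n=0}^m t^{-(n+1)}\big| \leq \sum_{n=0}^m (R+\varepsilon)^{-(n+1)} = O\big((R+\varepsilon)^{-m}\big)$ by the geometric series bound (using $(R+\varepsilon)^{-1}>1$), so the outer integral is again $O\big((R+\varepsilon)^{-m}\big) = o(R^{-m})$, yielding the claimed estimate.

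I expect the main obstacle to be purely bookkeeping: getting the signs and the binomial coefficients to match the stated formulas exactly, which is precisely why Proposition~\ref{A2} is isolated beforehand — it packages the derivatives $g_n^{(l)}$ and $h_m^{(l)}$ in closed form so that the residue computation becomes a direct substitution. One small point requiring care is that a pole $u\in P$ may have order strictly less than $b$ (only $p_{u,b}\neq 0$ is guaranteed for \emph{some} $u$); this causes no problem since the corresponding $p_{u,k}$ simply vanish for $k$ exceeding the order of that particular pole, so the inner sum $\sum_{k=1}^b$ is harmless. No deeper analytic input is needed beyond holomorphic continuation past $|t|=R$, which is part of the standing hypothesis.
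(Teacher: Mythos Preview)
Your proposal is correct and follows the paper's proof essentially verbatim: apply the residue theorem on a circle of radius $R+\varepsilon$ (the paper takes $0<\varepsilon<1-R$ so that $(R+\varepsilon)^{-1}>1$), identify the residue at $t=0$ as $c_n$ or $\sum_{n=0}^m c_n$ and the residues at each $u\in P$ by pairing the principal part of $f$ with the Taylor coefficients of $g_n$, $h_m$ supplied by Proposition~\ref{A2}, then bound the outer integral using compactness and the geometric decay of $g_n$, $h_m$. The only cosmetic slip is that your initial Cauchy contour $|t|=R$ passes through the poles in $P$; start from a radius strictly less than $R$, or, as the paper does, apply the residue theorem directly on the disc $|t|\le R+\varepsilon$ and read off the residue at $0$.
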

\begin{proof} For $m\geq 0$, let $e_m(t)=g_m(t)$ or $e_m(t)=h_m(t)$. Let $\gamma$ be an anticlockwise oriented circle path with winding number~$1$ and radius~$R+\varepsilon$ for some $0<\varepsilon<1-R$ . By the residue theorem, we have 
$$  \Res_{t=0}(f(t)e_m(t)) + \sum\limits_{u\in P} \Res_{t=u}(f(t)e_m(t)) = \frac{1}{2\pi i}\ \Xint\circlearrowleft_{\gamma} f(t)e_m(t)\, dt.  $$
The right hand side including the integral can be bounded by $(R+\varepsilon)\max\nolimits_{\gamma}(f(t)) \max\nolimits_{\gamma}(e_m(t))$. The former maximum of this product exists, since $\gamma$ is compact, and the latter maximum is of the form $\kleino{R^{-(m+1)}}$, since $R<R+\varepsilon<1$. In particular, $h_m(t)=t^{-(m+1)}(1-t^{m+1})/(1-t)$ holds. Hence, we have
\begin{equation}\label{FormelA2} \left|\, \Res_{t=0}(f(t)e_m(t)) + \sum\limits_{u\in P} \Res_{t=u}(f(t)e_m(t)) \, \right| \in \kleino{R^{-m}}. 
\end{equation}
The residue for $t=0$ is $c_m$ in case of $e_m(t)=g_m(t)$, and $\sum\nolimits_{n=0}^m c_n$ in case of $e_m(t)=h_m(t)$. Further, the residues for the poles $t=u\in P$ are of the form
$$ \Res_{t=u}(f(t)e_m(t)) = \sum\limits_{k-l=1} p_{u,k}\, \frac{e_m^{(l)}(u)}{l!} = \sum\limits_{k=1}^b p_{u,k}\, \frac{e_m^{(k-1)}(u)}{(k-1)!}, $$
where the summands are coefficient products of the Laurent series for $f(t)$ and the Taylor series for $e_m(t)$ in $t=u$ respectively. 
This leads to the proposed relations by formula (\ref{FormelA2}) and Proposition~\ref{A2}.
\end{proof}

\begin{prop}\label{A4} We have the asymptotic relation
 $$ \left|\, \sum\limits_{n=0}^m \binom{n+l}{l}\, t^{-n} \, - \, \frac{1}{l!\, (1-t)}\, t^{-m}\, m^l \, \right| \in\kleino{R^{-m}\, m^l}   $$
 for $t\in\CComplex$ being fixed with $|t|=R$, and $m\rightarrow\infty$. 
\end{prop}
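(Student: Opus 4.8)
The plan is to isolate the dominant contribution, which comes from the summands with $n$ near $m$, by reversing the order of summation. Substituting $j=m-n$ gives
$$ \sum_{n=0}^m \binom{n+l}{l}\, t^{-n} = t^{-m}\sum_{j=0}^m \binom{m-j+l}{l}\, t^j, $$
so, since $|t^{-m}| = R^{-m}$ and $\grossO{R^{-m} m^{l-1}} \subseteq \kleino{R^{-m} m^l}$, the proposition reduces to the claim
$$ \sum_{j=0}^m \binom{m-j+l}{l}\, t^j = \frac{m^l}{l!\,(1-t)} + \grossO{m^{l-1}}. $$

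To prove this, I would expand $\binom{m-j+l}{l} = \frac{1}{l!}\prod_{i=1}^l (m+i-j)$ as a polynomial in $m$ whose coefficients are elementary symmetric functions of $1-j,\dots,l-j$. This yields $\binom{m-j+l}{l} = \frac{m^l}{l!} + \sum_{k=0}^{l-1} c_k(j)\, m^k$, where each $c_k$ is a polynomial in $j$ of degree at most $l-k$ that does \emph{not} depend on $m$. Inserting this expansion,
$$ \sum_{j=0}^m \binom{m-j+l}{l}\, t^j = \frac{m^l}{l!}\sum_{j=0}^m t^j + \sum_{k=0}^{l-1} m^k \sum_{j=0}^m c_k(j)\, t^j. $$

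Both pieces are then controlled using $|t|=R<1$. For the first, $\sum_{j=0}^m t^j = \frac{1}{1-t} - \frac{t^{m+1}}{1-t} = \frac{1}{1-t} + \grossO{R^m}$, so this piece equals $\frac{m^l}{l!(1-t)} + \grossO{m^l R^m}$, and $m^l R^m \to 0$. For the second, since $\sum_{j\geq 0} |c_k(j)|\, R^j < \infty$, each inner sum $\sum_{j=0}^m c_k(j)\, t^j$ is bounded uniformly in $m$, so that piece is $\grossO{m^{l-1}}$; adding everything gives the reduced claim. The only point requiring a little care is the polynomial expansion of $\binom{m-j+l}{l}$ in $m$ together with the observation that its coefficients $c_k(j)$ are independent of $m$ and grow only polynomially in $j$; once that is in place the rest is a routine geometric-series estimate, so I do not anticipate a real obstacle.
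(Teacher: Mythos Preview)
Your proof is correct, but it proceeds by a genuinely different route than the paper's.

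The paper establishes an exact algebraic identity for the partial sum of the binomial series,
\[
(1-T)^{l+1}\sum_{n=0}^{m}\binom{n+l}{l}\,T^{n} \;=\; 1-\sum_{i=0}^{l}\binom{m+i}{i}(1-T)^{i}\,T^{m+1},
\]
proved by induction on~$l$, then substitutes $T=1/t$ and reads off the dominant term $\binom{m+l}{l}\cdot\frac{z}{1-z}\,z^{m}$, the remaining terms being visibly $\grossO{R^{-m}m^{l-1}}$. Your argument instead reverses the summation order via $j=m-n$, so that the sum becomes $t^{-m}$ times a sum $\sum_{j=0}^{m}\binom{m-j+l}{l}\,t^{j}$ with $|t|<1$; you then expand the binomial coefficient as a polynomial in $m$ with coefficients polynomial in $j$ and bound each resulting piece by a convergent power series.

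Both approaches ultimately yield the sharper bound $\grossO{R^{-m}m^{l-1}}$. The paper's identity gives a fully explicit closed form for the partial sum, which would be convenient if one wanted further terms in the expansion; your method is more direct, avoids the inductive lemma, and makes it transparent why the main contribution comes from $n$ near $m$. Either is perfectly adequate here.
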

\begin{proof} We have the algebraic relation
 $$ (1-T)^{l+1} \sum\limits_{n=0}^m \binom{n+l}{l}\, T^n = 1 - \sum\limits_{i=0}^l \binom{m+i}{i}\, (1-T)^{i}\, T^{m+1}, $$ 
a formula for the first $m$ terms of the binomial series expansion for $(1-T)^{-(l+1)}$, which follows from induction over $l$.   
If we substitute $T$ for a complex number $z$ of absolute value $|z|=1/R>1$, we have
$$ \sum\limits_{n=0}^m \binom{n+l}{l}\, z^n = \frac{1}{(1-z)^{l+1}} - \sum\limits_{i=0}^{l-1} \binom{m+i}{i}\, \frac{z}{(1-z)^{l+1-i}}\, z^m - \binom{m+l}{l}\, \frac{z}{1-z}\, z^m. $$
The binomial coefficient $\binom{n+i}{i}$ is a polynomial in $n$ of degree $i$ with leading coefficient $1/i!$. By subtracting the leading term of the right hand side, we obtain
$$ \left|\, \sum\limits_{n=0}^m \binom{n+l}{l}\, z^n \, + \, \frac{m^l}{l!}\, \frac{z}{1-z}\, z^m \,\right| \leq c\, R^{-m}\, m^{l-1}, $$
where $c$ is a constant not depending on $m$. Now, the proposed bound follows with the choice $z=1/t$ and $|t|=R<1$. 
\end{proof}

\begin{lemma}\label{A3} Let $f(t)=\sum\nolimits_{n\geq 0} c_nt^n$ be a power series with properties as given above. Further, assume the pole set $P$ to be of the form $P=\{ R\xi^{-j} : 1\leq j\leq \ell\}$ for a primitive $\ell$-th root of unity $\xi\in\CComplex$, and let $p_{j}=p_{u,b}$ for $u=R\xi^{-j}$. Then the following holds. 
\begin{itemize}
 \item[\textup{(a)}] The coefficient $c_n$ fulfils the asymptotic relation 
$$ \left|\, c_n \, - \, \left( \sum\limits_{j=1}^{\ell} \frac{(-R\xi^{-j})^{-b}\, p_{j}}{(b-1)!}\, \xi^{jn} \right) R^{-n}\, n^{b-1} \, \right| \in \kleino{R^{-n}\, n^{b-1}}. $$
For $n\rightarrow\infty$ running over some arithmetic progression modulo $\ell$, we obtain $c_n\in\grossTheta{R^{-n}\, n^{b-1}}$.
 \item[\textup{(b)}] The coefficient sum $\sum\nolimits_{n=0}^m c_n$ fulfils the asymptotic relation
$$ \left|\, \sum\limits_{n=0}^m c_n \, - \,  \left( \sum\limits_{j=1}^{\ell} \frac{(-R\xi^{-j})^{-b}\, p_{j}}{(b-1)!\, (1-R\xi^{-j})}\, \xi^{jm} \right) R^{-m}\, m^{b-1} \, \right| \in \kleino{R^{-m}\, m^{b-1}}. $$
For $m\rightarrow\infty$ running over some arithmetic progression modulo $\ell$, we obtain $\sum\nolimits_{n=0}^m c_n\in\grossTheta{R^{-m}\, m^{b-1}}$.
\end{itemize}
\end{lemma}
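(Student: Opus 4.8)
The plan is to combine Lemma~\ref{A1} with Proposition~\ref{A4} (and the trivial one-term analogue of the latter) and then to substitute the explicit pole set $P=\{R\xi^{-j}:1\le j\le\ell\}$, retaining only the leading order in $n$ resp.\ $m$. For assertion~(a) I would start from the approximation of Lemma~\ref{A1}(a) and recall, as in the proof of Proposition~\ref{A4}, that $\binom{n+k-1}{k-1}$ is a polynomial in $n$ of degree $k-1$ with leading coefficient $1/(k-1)!$; hence for any $u$ with $|u|=R$,
$$ \binom{n+k-1}{k-1}u^{-n} = \frac{n^{k-1}}{(k-1)!}\,u^{-n} + \grossO{R^{-n}n^{k-2}}. $$
Since every pole of $P$ lies on $|t|=R$ and has order at most $b$, the terms of the main sum of Lemma~\ref{A1}(a) with $k\le b-1$ are $\grossO{R^{-n}n^{b-2}}$, and the error of Lemma~\ref{A1}(a) is $\kleino{R^{-n}}$; for every $b\ge1$ both of these lie in $\kleino{R^{-n}n^{b-1}}$ (the case $b=1$ being trivial, since then there are no lower-order poles and the two error terms coincide). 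What survives is the $k=b$ part, which after replacing $\binom{n+b-1}{b-1}$ by $n^{b-1}/(b-1)!$ and writing $u=R\xi^{-j}$, $p_{u,b}=p_j$, $u^{-n}=R^{-n}\xi^{jn}$, is exactly the bracketed expression of~(a) times $R^{-n}n^{b-1}$.

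For assertion~(b) I would run the same argument starting from Lemma~\ref{A1}(b), now invoking Proposition~\ref{A4} with $l=k-1$ and $t=u$ (legitimate since $|u|=R$) to replace $\sum_{n=0}^m\binom{n+k-1}{k-1}u^{-n}$ by $\tfrac{1}{(k-1)!\,(1-u)}u^{-m}m^{k-1}$ up to $\kleino{R^{-m}m^{k-1}}$. Again only the $k=b$ poles contribute to the leading order, the poles of order $<b$ and the $\kleino{R^{-m}}$ error of Lemma~\ref{A1}(b) being absorbed into $\kleino{R^{-m}m^{b-1}}$ exactly as above; substituting $u=R\xi^{-j}$ produces the extra factor $1/(1-R\xi^{-j})$ and yields the stated relation.

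For the $\grossTheta$ statements I would fix a residue class $n\equiv n_0\pmod\ell$; then $\xi^{jn}=\xi^{jn_0}$ is constant along the progression, so the bracket in~(a) becomes a fixed complex number $A(n_0)$ and the asymptotic reads $c_n=A(n_0)R^{-n}n^{b-1}+\kleino{R^{-n}n^{b-1}}$. Once $A(n_0)\neq0$, the triangle inequality gives $\tfrac12|A(n_0)|\le |c_n|/(R^{-n}n^{b-1})\le\tfrac32|A(n_0)|$ for all large $n$ in the progression, i.e.\ $c_n\in\grossTheta{R^{-n}n^{b-1}}$; the analogous computation, with $A(n_0)$ replaced by the bracket of~(b), handles the coefficient sums.

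The hard (or at least the only nonroutine) part will be to show that \emph{some} $A(n_0)$ is nonzero, equivalently that the leading oscillating term does not cancel along every residue class modulo $\ell$. I would deduce this from the standing hypothesis $p_{u,b}\neq0$ for at least one $u\in P$: the vector $\big((-R\xi^{-j})^{-b}p_j\big)_{1\le j\le\ell}$ is then nonzero, the matrix $(\xi^{jn_0})_{j,n_0}$ is a nonsingular Vandermonde matrix (the $\xi^j$ being the $\ell$ distinct $\ell$-th roots of unity, i.e.\ linear independence of the characters of $\mathbf{Z}/\ell\mathbf{Z}$), hence the values $A(0),\dots,A(\ell-1)$ cannot all vanish. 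For~(b) the same argument applies after dividing the $j$-th coordinate by $1-R\xi^{-j}$, which is nonzero because $0<R<1$. Everything else is bookkeeping on error terms of the form $\grossO{R^{-m}m^{b-2}}$ and $\kleino{R^{-m}}$ against the main term $R^{-m}m^{b-1}$.
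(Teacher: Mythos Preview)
Your proposal is correct and follows essentially the same route as the paper's proof: the paper simply says the formulas are ``an easy consequence from Lemma~\ref{A1} and Proposition~\ref{A4}'' and then invokes ``independence of characters'' to get nonvanishing of the bracket along some residue class modulo~$\ell$, which is precisely your Vandermonde argument spelled out. Your write-up supplies the details the paper leaves implicit (separating the $k=b$ terms from the $k<b$ terms, tracking the error sizes), but the structure is identical.
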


\begin{proof} The formulas are an easy consequence from Lemma~\ref{A1} and Proposition~\ref{A4}. By independence of characters, the inner sum running over the index $j$ is nonzero for at least one congruence class modulo $\ell$. By considering only integers of the so given arithmetic progression, we yield the size of the coefficients and their sums respectively.
\end{proof}

\begin{theorem}\label{A5} Let $f(t)=\sum\nolimits_{n\geq 0} c_nt^n$ with pole set $P$ be given as in Lemma~\ref{A3}. 
\begin{itemize}
 \item[\textup{(a)}] There is some integer $1\leq e\leq \ell$, such that the coefficient $c_n$ has the asymptotic equivalence class
$$ c_n \sim \left( - \sum\limits_{j=1}^{\ell} \frac{(R\xi^{-j})^{-b}\, p_{j}}{(b-1)!\, \log(R)^{b-1}}\, \xi^{je} \right) X\, \log(X)^{b-1} $$
for $X=R^{-n}$ and $n\rightarrow\infty$ running over the arithmetic progression $n\equiv e \pmod{\ell}$.
 \item[\textup{(b)}] There is some integer $1\leq e\leq \ell$, such that the coefficient sum $\sum\nolimits_{n=0}^m c_n$ has the asymptotic equivalence class
$$ \sum\limits_{n=0}^m c_n \sim \left( - \sum\limits_{j=1}^{\ell} \frac{(R\xi^{-j})^{-b}\, p_{j}}{(b-1)!\, (1-R\xi^{-j})\, \log(R)^{b-1}}\, \xi^{je} \right) X\, \log(X)^{b-1}  $$
for $X=R^{-m}$ and $m\rightarrow\infty$ running over the arithmetic progression $m\equiv e \pmod{\ell}$.
 \item[\textup{(c)}] Assume $p_{u,b}\neq 0$ for $u=R$ only. Then we have
$$ \sum\limits_{n=0}^m c_n \sim  \left( -\frac{R^{-b}\, p_{R,b}}{(b-1)!\, (1-R)\, \log(R)^{b-1}}\right) X\, \log(X)^{b-1} $$
for $X=R^{-m}$ and $m\rightarrow\infty$.
\end{itemize}
\end{theorem}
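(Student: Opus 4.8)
The plan is to derive all three assertions from Lemma~\ref{A3} by the change of variable $X=R^{-n}$ (resp. $X=R^{-m}$), tracking the signs that appear along the way. For assertion~(a), write $A_n=\sum_{j=1}^{\ell}\frac{(-R\xi^{-j})^{-b}\,p_j}{(b-1)!}\,\xi^{jn}$ for the bracketed coefficient in Lemma~\ref{A3}(a). Since $\xi^{\ell}=1$, the quantity $A_n$ depends only on $n\bmod\ell$, and by the independence of characters invoked in the proof of Lemma~\ref{A3} there is a residue $e$ with $1\leq e\leq\ell$ and $A_e\neq 0$; fix this $e$. Along the progression $n\equiv e\pmod{\ell}$, Lemma~\ref{A3}(a) gives $c_n=A_e\,R^{-n}\,n^{b-1}+\kleino{R^{-n}\,n^{b-1}}$ with $A_e\neq 0$, so $c_n/(A_e R^{-n}n^{b-1})\to 1$. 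Now put $X=R^{-n}$: since $0<R<1$ we have $\log R<0$ and $n=\log(X)/(-\log R)$, whence $n^{b-1}=(-1)^{b-1}\log(X)^{b-1}/\log(R)^{b-1}$. Combining this with the identity $(-R\xi^{-j})^{-b}(-1)^{b-1}=-(R\xi^{-j})^{-b}$ turns $A_e R^{-n}n^{b-1}$ into $\bigl(-\sum_{j=1}^{\ell}\frac{(R\xi^{-j})^{-b}p_j}{(b-1)!\,\log(R)^{b-1}}\,\xi^{je}\bigr)\,X\,\log(X)^{b-1}$, which is exactly the claimed equivalence class in the redefined sense with modulus $\ell$ and offset $e$.

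Assertion~(b) is the same argument with Lemma~\ref{A3}(b) in place of Lemma~\ref{A3}(a); the only difference is the extra factor $1/(1-R\xi^{-j})$ inside $A_n$, which is carried through verbatim. For assertion~(c), the hypothesis $p_{u,b}\neq 0$ only for $u=R$ means, in the notation of Lemma~\ref{A3}, that $p_j=0$ for $1\leq j\leq\ell-1$ while $p_{\ell}=p_{R,b}\neq 0$, since $u=R$ corresponds to $j=\ell$ (where $\xi^{-\ell}=1$). Then the sum over $j$ in Lemma~\ref{A3}(b) collapses to the single term $\frac{(-R)^{-b}p_{R,b}}{(b-1)!\,(1-R)}$, a nonzero constant that no longer involves $\xi^{jm}$ and hence is independent of $m$. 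Therefore $\sum_{n=0}^{m}c_n$ is asymptotic to $\frac{(-R)^{-b}p_{R,b}}{(b-1)!\,(1-R)}\,R^{-m}\,m^{b-1}$ for every $m\to\infty$, with no need to restrict to a subprogression, and the substitution $X=R^{-m}$ as in~(b) (using $(-R)^{-b}(-1)^{b-1}=-R^{-b}$) yields the stated formula.

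These computations are routine. The one genuine point is the passage to the arithmetic progression in~(a) and~(b): one must invoke the linear independence of the characters $n\mapsto\xi^{jn}$ to know that $A_n$ does not vanish identically, so that the $\kleino{R^{-n}n^{b-1}}$ error of Lemma~\ref{A3} is of strictly smaller order than the main term along the chosen progression and the limit of the ratio is genuinely $1$. Everything else is bookkeeping of the factors $(-1)^b$ coming from $(-u)^{-b}$ and $(-1)^{b-1}$ coming from rewriting $n^{b-1}$ via $\log R<0$, together with the observation that in~(c) the unique maximal pole at $u=R$ contributes a leading coefficient that is constant in $m$, so the restriction to a subprogression becomes unnecessary.
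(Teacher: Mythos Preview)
Your proposal is correct and follows essentially the same route as the paper, which simply notes that Theorem~\ref{A5} is a restatement of Lemma~\ref{A3} under the substitution $n=-\log(R^{-n})/\log(R)$. You have merely spelled out the sign bookkeeping and the specialization in~(c) that the paper leaves implicit.
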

\begin{proof} This theorem is just a restatement of Lemma~\ref{A3} with regard of $n=-\log(R^{-n})/\log(R)$. 
\end{proof}

\textbf{Acknowledgement}. I was funded by Deutsche Forschungsgesellschaft via a grant at Berlin Mathematical School and via the priority project SPP 1489 KL 1424/8-1. My special thanks go to Florian He\ss, J\"urgen Kl\"uners, and Florin Nicolae for fruitful discussions on this work.

\end{appendix}

\end{document}